\newtheorem{theorem}{Theorem}
\newtheorem{corollary}[theorem]{Corollary}
\newtheorem{lemma}[theorem]{Lemma}
\newtheorem{proposition}[theorem]{Proposition}
\theoremstyle{definition}
\newtheorem{definition}[theorem]{Definition}
\newtheorem{remark}[theorem]{Remark}
\def \1{{\mathbb 1}}
\begin{document}

\begin{center}
{\LARGE Extending the Choquet theory: Trace convexity.}
\end{center}
\smallskip

\begin{center}
{\Large \textsc{Mohammed Bachir, Aris Daniilidis}}
\end{center}
\bigskip

\noindent \textbf{Abstract.} We introduce the notion of trace convexity for functions and respectively, for subsets of a compact topological space. This notion generalizes both classical convexity of vector spaces, as well as Choquet convexity for compact metric spaces. We provide new notions of trace-convexification for sets and functions as well as a general version of Krein-Milman theorem. We show that the class of upper semicontinuous convex-trace functions attaining their maximum at exactly one Choquet-boundary point is residual and we obtain several enhanced versions of the maximum principle which generalize both the classical Bauer's theorem as well as its abstract version in the Choquet theory. We illustrate our notions and results with concrete examples of three different types. 

\vspace{0.55cm}

\noindent \textbf{Keywords and phrases:} Trace-convexity, Choquet convexity, Krein-Milman theorem, Bauer Maximum Principle, Variational Principle, Exposed point, Extreme point.

\vspace{0.55cm}

\noindent \textbf{AMS Subject Classification} \  \textit{Primary} 26B25, 46A55 ; \textit{Secondary} 46B22, 46G99, 52A07

\tableofcontents

\section{Introduction.}

The classical Krein-Milmann theorem (\cite{KM}) states that every nonempty
convex compact subset~$K$ of a locally convex vector space $E$ can be
represented as the closed convex hull of its extreme points, that is,
$K=\overline{\mathrm{co}}\,(\mathrm{Ext}(K))$. The result is based on the
Hahn-Banach separation theorem together with Zorn's lemma. It asserts in
particular the existence of extreme points in every convex compact set. An
alternative way to obtain the same conclusion is based on the Bauer maximum
principle (\cite{Bau}), which states that for every nonempty convex
compact set, every upper semicontinuous convex function attains its maximum at some extreme point of that set.\smallskip

A more general version of the Krein-Milman theorem can be obtained via the
Choquet representation theory, in terms of Radon measures on $K$ whose support
is contained in the closure of the extreme points of $K$. This theory gives rise to an abstract definition of convexity, further beyond the framework of
vector spaces, the so-called $\Phi$-Choquet convexity, where $K$ is a compact
metric space and $\Phi$ is a closed subspace of the Banach space
$\mathcal{C}(K)$ of real-valued continuous functions on $K$. Then, a function
$f\in \mathcal{C}(K)$ is called $\Phi$-Choquet convex (see forthcoming Definition~\ref{def-choquet-conv}) if for every $x\in K$ and probability measure $\mu$ on
$K$ the following implication holds:
\[
\forall \phi \in \Phi, \,\,\phi(x)=\int_{K}\phi \,d\mu \quad \Longrightarrow
\quad \text{ }f(x)\leq \int_{K}fd\mu.
\]
In particular, a $\Phi$-Choquet convex function $f$ is, by definition,
continuous. Moreover, an abstract version of Bauer's maximum principle holds
true for these functions defined on a compact metric space, which evokes the
so-called Choquet boundary of $K$.  
\smallskip

In this work, we adopt a geometrical approach to define convexity on a compact
(not necessarily metric) space $K$. Considering the canonical injection $\delta^{\Phi}:K\rightarrow \Phi^{\ast}$, given by $\delta^{\Phi}(x)(\phi)=\phi(x)$, for all $\phi\in\Phi$ we may identify $K$ with its homeomorphic image $\delta^{\Phi}(K)$, which lies in particular into the convex $w^{\ast}$-compact subset $K(\Phi)$ of $\Phi^{\ast}$. Under this identification, we call a set (respectively, a function) \emph{convex-trace}, if it is the trace on $K$ of a convex subset of $K(\Phi)$ (respectively,
the restriction on $K$ of a real-valued convex function on $K(\Phi)$), see Definition~\ref{def_TC-set} (respectively, Definition~\ref{def_convex-trace}). Notice that a convex-trace function does not have to be continuous. Nonetheless, for
every continuous function $f$ we may associate a continuous convex-trace
function $\widehat{f}$, which corresponds to a trace-convexification for $f$. Moreover, we show that continuous convex-trace functions coincide with $\Phi$-Choquet convex functions. Therefore trace-convexity can be seen as an alternative geometric definition for Choquet convexity.\smallskip

Our approach pinpoints a natural extension for Choquet convexity. Indeed, we can consider the class of convex-trace functions that are merely upper semicontinuous, which in addition, is the optimal framework to all results related to the maximum principle. In this work, we establish enhanced versions
of the maximum principe for such functions defined on a compact (not
necessarily metric) space, extending both the classical Bauer's maximum
principle for convex functions on locally convex spaces and its abstract
version in the Choquet theory on compact metric spaces. Moreover, in the
particular case that the compact set $K$ is metrizable, we obtain a genericity
result, whose proof does not require Zorn's lemma and is based on a new
variational principle established recently in \cite[Lemma 3]{Ba}. This new
variational principle is in the spirit of that of Deville-Godefroy-Zizler in
\cite{DGZ} and Deville-Revalski in \cite{DR}, and at the same time,
complementary to them: it does not require the existence of a bump function
(ie. a function with a nonempty bounded support) but in turn, it requires the
set to be compact and metrizable. This assumption applies particularly well when
$\Phi$ is the space of affine continuous functions or the space of harmonic
functions, spaces which do not dispose bump functions. \smallskip

Let us mention, for completeness, that motivated by Choquet convexity, there exists another abstract notion of convexity, called $\Phi$-convexity, studied in works by Ky~Fan~\cite{Ky}, M. W. Grossman~\cite{Gr} and B. D. Khanh~\cite{Kh}. This notion is defined algebraically, based on an abstract definition of segments, and aims to extend from convex sets to compact spaces both the classes of convex and of strict quasi-convex functions. Since we are interested in notions extending convexity of functions in a fully compatible way (in the sense that the definition, when applied to a convex subset of a locally convex space, should yield exactly the class of convex functions and not more than this), we shall not deal with this notion in this work. On the other hand, $\Phi$-convex sets (in the theory of Ky Fan) will turn out to be exactly the trace-convex sets (see Remark~\ref{rem-KyFan}). Still we improve the abstract version of the Krein-Milman theory, via a well-adapted definition of $\Phi$-extreme point, that goes with the spirit of trace-convexity. These points are in general much less than the $\Phi$-extreme points in the theory of Ky Fan.

\smallskip

Throughout this work, all topological spaces will be assumed Hausdorff. We shall systema\-tically denote by $K$ a nonempty compact space (which might be metrizable or not) and by $\Phi$ a closed subspace of $\mathcal{C}(K)$ which separates points and contains the constant functions. The whole theory can also be developed in a more general setting, starting from a completely regular space~$X$ (and defining $K:=\beta X$ to be the Stone-\v{C}ech compactification of $X$), or considering an open dense subset $X$ of a given compact space $K$. Notwithstanding, we shall only adopt this more general setting when we deal with convex-trace sets, in order to discuss properly some examples at the end of the manuscript. \medskip

 The manuscript is organized as follows: \smallskip 
 
In Section~\ref{sec:2}, we review concepts related to Choquet convexity in a topological setting ($K$ is a compact space). In particular we recall the notions of representing measure, of Choquet boundary and of a Choquet convex function. \smallskip 

In Section~\ref{sec:3} we introduce the central notion of this work, that is, the notion of trace-convexity, both for functions (Definition~\ref{def_convex-trace}) and for sets (Definition~\ref{def_TC-set}).  We show that Choquet convexity can be equivalently restated in terms of trace convexity for continuous functions (Corollary~\ref{equiv}). This restatement allows a natural extension by considering traces of upper semicontinuous convex functions, see Definition~\ref{def-choquet-conv}. We also introduce convex-trace sets and a trace convexification for sets and establish an abstract Krein-Milman theorem.\smallskip 

In Section~\ref{sec:NEW} we show that this new setting fits perfectly to the framework of Bauer's maximum principle (Theorem~\ref{Choquet1}). Moreover, in the specific case that the compact set is metrizable, an enhanced version of the maximum principle (Theorem~\ref{Pconvex1}) and a generic maximum principle (Theorem~\ref{Choquet}) hold true. \smallskip

Finally, in Section~\ref{sec:ex} we provide three typical examples of different nature to illustrate these notions and the results of this work.  

\section{Preliminaries and notation.}\label{sec:2}

Let $K$ be a compact topological space and let $\mathcal{C}(K)$ be
the Banach space of continuous real-valued functions on $K$ equipped with the
sup-norm: $\|f\|_{\infty}:=\sup_{x\in K}|f(x)|$, for $f\in\mathcal{C}(K)$. Throughout this work, $\Phi$ will denote a closed subspace of
$\mathcal{C}(K)$ satisfying the following two properties:

\begin{itemize}
\item[(i)] $\Phi$ separates points in $K$ ;\\(that is, for every $x,y\in K$ with $x\neq y$, there exists $\phi \in \Phi$ such that $\phi(x)\neq \phi(y)$)

\item[(ii)] $\Phi$ contains the constant functions.\\
(equivalently, the function $\mathbf{1}(x)=1$, for all $x\in K$ belongs to $\Phi$.)
\end{itemize}
It is well-known that $K$ admits a canonical injection to $\mathcal{C}
(K)^{\ast}$ by means of the following Dirac mapping
\begin{equation}
\left \{
\begin{array}
[c]{l}
\delta:K\longrightarrow \mathcal{C}(K)^{\ast}\smallskip \\
\delta(x)=\delta_{x}\quad \text{with }\delta_{x}(f):=f(x),\text{\thinspace for
all }f\in \mathcal{C}(K)
\end{array}
\right.  \label{eq: delta}
\end{equation}
\newline If we equip $\mathcal{C}(K)^{\ast}$ with the $\sigma(\mathcal{C}
(K)^{\ast},\mathcal{C}(K))$--topology (that we simply call $w^{\ast}
$-topology), then the above injection is homeomorphic and $K$ is topologically
identified to $\delta(K):=\{ \delta_{x}:x\in K\}$ as subset of $(\mathcal{C}(K)^{\ast},w^{\ast})$. We also recall (see \cite{Ru} eg.) that the dual space $\mathcal{C}(K)^{\ast}$ is naturally identified with the Radon measures on $K$ via the duality map
\begin{equation}
\langle \mu,f\rangle=\int_{K}fd\mu,\quad \text{for all }\mu \in \mathcal{C}
(K)^{\ast}\text{ and }f\in \mathcal{C}(K). \label{eq:duality map}
\end{equation}
In particular, $\delta_{x}$ is the Dirac measure of $x$ and (\ref{eq: delta})
becomes:
\[
\delta_{x}(f):=\langle \delta_{x},f\rangle=f(x).
\]
Furthermore, the dual norm $||\mu||_{\ast}$ coincides with the total variation
of the measure $\mu.$\smallskip

We denote by $\mathcal{M}^{1}(K)$ the set of all Borel probability measures on
$K$. This set is a $w^{\ast}$-compact convex subset of $\mathcal{C}(K)^{\ast}$
and coincides with the weak$^{\ast}$ closed convex hull of the set
$\delta(K)$, that is,
\begin{equation}
\mathcal{M}^{1}(K)=\left \{  \mu \in \mathcal{C}(K)^{\ast}:\Vert \mu \Vert_{\ast
}=\langle \mu,\boldsymbol{1}\rangle=1\right \}  =\overline{\mathrm{conv}
}^{w^{\ast}}\left(  \delta(K)\right)  \  \subset \  \mathcal{C}(K)^{\ast},
\label{eq:M1(K)}
\end{equation}
where $\boldsymbol{1}(x)=1,$ for all $x\in K.$

\begin{definition}
[$\Phi$-representing measures]\label{def:Mx}Let $x\in K.$ We say that $\mu
\in \mathcal{M}^{1}(K)$ is a $\Phi$-representing (probability) measure for $x$
if
\[
\phi(x)=\int_{K}\phi \,d\mu,\quad \text{for all }\phi \in \Phi.
\]

\end{definition}

The set of all $\Phi$-representing measures of $x$ is denoted by
$\mathcal{M}_{x}(\Phi).$ Notice that $\delta_{x}\in \mathcal{M}_{x}(\Phi)$, for
every $x\in K$, therefore $\mathcal{M}_{x}(\Phi)$ is nonempty. Every $Q\in
\Phi^{\ast}$ (linear continuous functional on~$\Phi$) can be extended, via the
Hahn-Banach theorem, to an element of $\mathcal{C}(K)^{\ast}$ (linear
continuous functional on $\mathcal{C}(K)$) of the same norm. However this
extension is neither unique nor canonical. Let us consider the following
equivalence relation on $\mathcal{C}(K)^{\ast}:$
\begin{equation}
\mu \sim \mu^{\prime}\; \Longleftrightarrow \; \langle \mu,\phi \rangle=\langle
\mu^{\prime},\phi \rangle,\, \, \text{for all }\phi \in \Phi. \label{eq:equiv}
\end{equation}
We denote by $[\mu]$ the class of equivalence of $\mu \in \mathcal{C}(K)^{\ast}$
under the above binary relation. Since this relation is compatible with the
linear structure of $\mathcal{C}(K)^{\ast},$ setting $\widehat{\pi}(\mu)=[\mu],$ for all $\mu \in \mathcal{C}(K)^{\ast},$ we obtain a linear bounded surjective map 
$\widehat{\pi}:(\mathcal{C}(K)^{\ast},||\cdot||_{\infty})\longrightarrow(\mathcal{C}(K)^{\ast}/\!\!\sim,||\cdot||_{\sim}),$ where
$||\cdot||_{\sim}$ is the quotient norm on $\mathcal{C}(K)^{\ast}/\!\!\sim,$
defined as follows:
\[
||[\mu]||_{\sim}:=\inf \, \left \{  ||\mu^{\prime}||_{\ast}:\mu^{\prime}
\sim \mu \right \}  .
\]
If $\mathcal{C}(K)^{\ast}$ is equipped with its $w^{\ast}$-topology, then we
denote by $\tau$ the final (quotient) topology on $\mathcal{C}(K)^{\ast}/\!\!\sim$
under $\widehat{\pi}$, that is, the finest topology for which the mapping
$$\widehat{\pi}:(\mathcal{C}(K)^{\ast},w^{\ast})\longrightarrow(\mathcal{C}
(K)^{\ast}/\!\!\sim,\tau)$$ is continuous. Therefore, $O\in \tau$ if and only
if $\widehat{\pi}^{-1}(O)$ is $w^{\ast}$-open in $\mathcal{C}(K)^{\ast}$. The
following result shows that the space $(\mathcal{C}(K)^{\ast}/\!\!\sim,\tau)$ is
in fact linearly homeomorphic to $\Phi^{\ast}$, if the latter is considered
with its $\sigma(\Phi^{\ast},\Phi)$-topology (which will be also denoted by
$w^{\ast}$ if no confusion arises). Before we proceed, we observe that the
linear surjective map
\[
\left \{
\begin{array}
[c]{l}
i^{\ast}:\mathcal{C}(K)^{\ast}\longrightarrow \Phi^{\ast} \medskip \\
i^{\ast}(\mu)=\mu|_{\Phi}
\end{array}
\right.
\]
is ($w^{\ast}$-$w^{\ast}$)-continuous, being the adjoint of the identity map $i:(\Phi,\Vert \!\cdot\! \Vert_{\infty})\rightarrow
(\mathcal{C}(K),\Vert\! \cdot\! \Vert_{\infty})$ (which is a linear isometric injection). 

\begin{lemma}
[Identification of $C(K)^{\ast}/\!\!\sim$ with $(\Phi^{\ast},w^{\ast})$
]\label{Lem_topo} The bijective map
\[
\left \{
\begin{array}
[c]{l}
\mathcal{J}:(\mathcal{C}(K)^{\ast}/\!\!\sim,\tau)\longrightarrow(\Phi^{\ast
},w^{\ast})\medskip \\
\mathcal{J}([\mu])=\mu|_{\Phi}
\end{array}
\right.
\]
is a linear homeomorphism between $(\mathcal{C}(K)^{\ast}/\!\!\sim,\tau)$ and
$(\Phi^{\ast},w^{\ast}).$ Moreover, we have the identity:
\begin{equation}
\widehat{\pi}(\mu)=\mathcal{J}^{-1}\circ i^{\ast}(\mu),\quad \text{for every
}\mu \in \mathcal{C}(K)^{\ast}. \label{eq:i*}
\end{equation}

\end{lemma}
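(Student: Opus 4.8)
The plan is to verify that $\mathcal{J}$ is a well-defined bijection, that it is linear, and that both $\mathcal{J}$ and $\mathcal{J}^{-1}$ are continuous for the respective topologies, and finally to check the factorization \eqref{eq:i*}. The key observation that makes everything run is that $i^{\ast}\colon(\mathcal{C}(K)^{\ast},w^{\ast})\to(\Phi^{\ast},w^{\ast})$ is linear, surjective (by Hahn-Banach, every functional on $\Phi$ extends to one on $\mathcal{C}(K)$), $w^{\ast}$-$w^{\ast}$ continuous (as already noted in the excerpt, being the adjoint of the isometric inclusion $i$), and its kernel is exactly the equivalence class $[0]$: indeed $i^{\ast}(\mu)=0$ means $\langle\mu,\phi\rangle=0$ for all $\phi\in\Phi$, i.e. $\mu\sim 0$. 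More generally $i^{\ast}(\mu)=i^{\ast}(\mu')$ iff $\mu\sim\mu'$, so $i^{\ast}$ factors through the quotient: there is a unique map $\mathcal{J}$ with $\mathcal{J}\circ\widehat{\pi}=i^{\ast}$, given precisely by $\mathcal{J}([\mu])=\mu|_{\Phi}$, and this $\mathcal{J}$ is a linear bijection because $i^{\ast}$ is surjective with kernel-classes collapsing exactly to the fibers of $\widehat{\pi}$. This also immediately yields \eqref{eq:i*}: composing $\mathcal{J}\circ\widehat{\pi}=i^{\ast}$ on the left with $\mathcal{J}^{-1}$ gives $\widehat{\pi}=\mathcal{J}^{-1}\circ i^{\ast}$.

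Next I would establish the topological statements. For continuity of $\mathcal{J}\colon(\mathcal{C}(K)^{\ast}/\!\!\sim,\tau)\to(\Phi^{\ast},w^{\ast})$: by the universal property defining $\tau$ as the final topology for $\widehat{\pi}$, a map out of the quotient is continuous iff its composition with $\widehat{\pi}$ is continuous; but $\mathcal{J}\circ\widehat{\pi}=i^{\ast}$ is $w^{\ast}$-$w^{\ast}$ continuous, so $\mathcal{J}$ is continuous. For continuity of $\mathcal{J}^{-1}\colon(\Phi^{\ast},w^{\ast})\to(\mathcal{C}(K)^{\ast}/\!\!\sim,\tau)$, I would argue directly from the definition of $\tau$: a set $O$ is $\tau$-open iff $\widehat{\pi}^{-1}(O)$ is $w^{\ast}$-open in $\mathcal{C}(K)^{\ast}$, and one checks $(\mathcal{J}^{-1})^{-1}(O)=\mathcal{J}(O)=(i^{\ast})(\widehat{\pi}^{-1}(O))$ is $w^{\ast}$-open in $\Phi^{\ast}$. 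Here is where the one genuine subtlety lies: one needs $i^{\ast}$ to be a $w^{\ast}$-$w^{\ast}$ \emph{open} map onto $\Phi^{\ast}$. This is the step I expect to be the main obstacle, since openness does not come for free from continuity and surjectivity of a linear map between topological vector spaces in general.

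To overcome it, I would use that $i^{\ast}$ is weak$^{\ast}$-to-weak$^{\ast}$ continuous and $\mathcal{C}(K)^{\ast}$, $\Phi^{\ast}$ are duals: a $w^{\ast}$-$w^{\ast}$ continuous linear map is itself the adjoint of a bounded operator between the preduals (here the inclusion $i\colon\Phi\hookrightarrow\mathcal{C}(K)$), and the adjoint of an isometric embedding with \emph{closed range} is a quotient map, hence $w^{\ast}$-open onto its image. Since $\Phi$ is a closed subspace of $\mathcal{C}(K)$, the range of $i$ is closed, so this applies and $i^{\ast}$ is a $w^{\ast}$-open surjection onto $\Phi^{\ast}$. (Alternatively, and perhaps more transparently, one can note that $i^{\ast}$ restricted to any closed dual ball is a continuous surjection of $w^{\ast}$-compact sets, hence a closed — and, being an open map between groups after quotienting by the kernel, also open — map; combined with the linear structure this gives openness of $i^{\ast}$ globally.) Once openness of $i^{\ast}$ is in hand, the computation above shows $\mathcal{J}(O)$ is $w^{\ast}$-open for every $\tau$-open $O$, i.e. $\mathcal{J}^{-1}$ is continuous, completing the proof that $\mathcal{J}$ is a linear homeomorphism. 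The remaining verifications — well-definedness of the quotient norm context, and that $\mathcal{J}$ respects the linear operations — are routine and I would dispatch them in a line.
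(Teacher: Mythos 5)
Your architecture is sound and genuinely different from the paper's. The algebraic part (the factorization $\mathcal{J}\circ\widehat{\pi}=i^{\ast}$, linear bijectivity, the identity \eqref{eq:i*}) and the $(\tau,w^{\ast})$-continuity of $\mathcal{J}$ via the universal property of the final topology coincide with what the paper does. You diverge on the continuity of $\mathcal{J}^{-1}$: the paper shows that $\mathcal{J}$ maps $\tau$-closed sets to $w^{\ast}$-closed sets by reducing, via the Banach--Dieudonn\'e theorem, to intersections with the balls $i^{\ast}(\bar{B}(0,R))$, where Banach--Alaoglu compactness makes the restriction of $\mathcal{J}$ to $\widehat{\pi}(\bar{B}(0,R))$ a homeomorphism. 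You instead go for global $w^{\ast}$-openness of $i^{\ast}$, which, if justified, is cleaner and avoids the reduction to bounded sets altogether.

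However, your justification of that pivotal step has a genuine gap. The theorem ``the adjoint of an isometric embedding with closed range is a quotient map'' is a statement about the \emph{norm} topologies ($i^{\ast}$ maps the unit ball of $\mathcal{C}(K)^{\ast}$ onto that of $\Phi^{\ast}$, by Hahn--Banach); a norm-quotient map need not be $w^{\ast}$-$w^{\ast}$ open, so ``hence $w^{\ast}$-open'' is a non sequitur as written. The parenthetical alternative is also not a proof: a continuous surjection of $w^{\ast}$-compact sets is closed, hence a topological quotient map, but not open in general (project a solid triangle onto a segment), and dual balls are not groups. What actually closes the gap is the subspace--quotient duality for weak topologies: $\ker i^{\ast}=\Phi^{\perp}$ is $w^{\ast}$-closed, the classes $[\mu]$ are exactly the cosets $\mu+\Phi^{\perp}$, and the quotient of $\sigma(\mathcal{C}(K)^{\ast},\mathcal{C}(K))$ by $\Phi^{\perp}$ is $\sigma\bigl(\mathcal{C}(K)^{\ast}/\Phi^{\perp},(\Phi^{\perp})_{\perp}\bigr)$, where $(\Phi^{\perp})_{\perp}=\Phi$ by the bipolar theorem --- this is precisely where norm-closedness of $\Phi$ enters. (Concretely: the quotient seminorm of $\mu\mapsto|\langle\mu,f\rangle|$ on $\mathcal{C}(K)^{\ast}/\Phi^{\perp}$ vanishes identically unless $f\in\Phi$, and equals $|\langle\cdot,f\rangle|$ when $f\in\Phi$.) Since the final topology $\tau$ under the linear surjection $\widehat{\pi}$ is the quotient TVS topology (and $\widehat{\pi}$ is automatically open, because $\widehat{\pi}^{-1}(\widehat{\pi}(U))=U+\Phi^{\perp}$), this identifies $(\mathcal{C}(K)^{\ast}/\!\!\sim,\tau)$ with $(\Phi^{\ast},w^{\ast})$ via $\mathcal{J}$. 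With that substitution your proof is complete.
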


\begin{proof} It is straightforward to see that the mapping $\mathcal{J}$ is
a linear bijection and (\ref{eq:i*}) holds. Since $i^{\ast}:(\mathcal{C}(K)^{\ast},w^{\ast})\longrightarrow(\Phi^{\ast},w^{\ast})$ is continuous, and
$\mathcal{J}\circ \widehat{\pi}=i^{\ast}$ it follows from the definition of the
final topology $\tau$ that $\mathcal{J}$ is $(\tau$,$w^{\ast})$-continuous. It
remains to prove that $\mathcal{J}$ maps $\tau$-closed sets to $w^{\ast}$-closed sets. 
To this end, let $F$ be $\tau$-closed in $\mathcal{C}(K)^{\ast
}/\!\!\sim.$ In view of Banach-Dieudonn\'{e} theorem, it is sufficient to prove
that for every $R>0$ the set $\mathcal{J}(F)\cap \mathbf{\bar{B}}_{R}$ is $w^{\ast}$-closed
in $(\Phi^{\ast},w^{\ast}),$ where $\mathbf{\bar{B}}_{R}=i^{\ast}(\bar
{B}(0,R))$ and $\bar{B}(0,R)$ is the closed ball of $\mathcal{C}(K)^{\ast}$
centered at $0$ and radius $R>0.$ By the Banach-Alaoglou theorem and the continuity
of $\widehat{\pi}$ we deduce that $\widehat{\pi}(\bar{B}(0,R))$ is $\tau
$-compact. It follows that the restriction of $\mathcal{J}$ on the ($\tau
$-compact) set $\widehat{\pi}(\bar{B}(0,R))$ is an homeomorphism between
$\widehat{\pi}(\bar{B}(0,R))$ and the closed ball $\mathbf{\bar{B}}_{R}=i^{\ast}(\bar{B}(0,R))$ of $\Phi^{\ast}$. 
This yields that the set
\[
\mathcal{J}(F)\cap \mathbf{\bar{B}}_{R}=\mathcal{J}(F\cap \bar{B}(0,R))
\]
is $w^{\ast}$-closed in $\Phi^{\ast}$ as asserted.
\end{proof}

\bigskip

Restricting the projection mappings $\widehat{\pi}:\mathcal{C}(K)^{\ast
}\longrightarrow \mathcal{C}(K)^{\ast}\!/\!\sim$ and $i^{\ast}:\mathcal{C}
(K)^{\ast}\longrightarrow \Phi^{\ast}$ on the set of probability measures
$\mathcal{M}^{1}(K)$ we obtain an affine homeomorphic bijection between (the
$\tau$-compact set) $\widehat{\pi}(\mathcal{M}^{1}(K)):=\mathcal{M}^{1}(K)/\!\!\sim$ 
and (the $w^{\ast}$-compact set)
\begin{equation}
i^{\ast}(\mathcal{M}^{1}(K)):=K(\Phi)=\{Q\in \Phi^{\ast}:\Vert Q\Vert=\langle
Q,\boldsymbol{1}\rangle=1\} \  \subset \  \Phi^{\ast}, \label{eq: K(Phi)}
\end{equation}
where we continue to denote by $\langle \cdot,\cdot \rangle$ the duality mapping
between $\Phi^{\ast}$ and $\Phi.$ In particular:

\begin{corollary}[Identification of $(\mathcal{M}^{1}(K)/\!\!\sim
,\tau)$ with $(K(\Phi),w^{\ast})$] \label{topo}
The bijective mapping $$\mathcal{J}:(\mathcal{M}^{1}(K)/\!\!\sim
,\tau)\rightarrow(K(\Phi),w^{\ast})$$ defined by $\mathcal{J}([\mu
]):=\mu_{|\Phi}$ is an affine homeomorphism and
\[
\mathcal{J\circ}\,\widehat{\pi}(\mu)=i^{\ast}(\mu)=\mu|_{\Phi},\quad \text{for
all }\mu \in \mathcal{M}^{1}(K).
\]

\end{corollary}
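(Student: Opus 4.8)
The plan is to deduce the corollary as a direct restriction of Lemma~\ref{Lem_topo}, the only genuine extra work being the explicit description of the target set $K(\Phi)$. First, recall that Lemma~\ref{Lem_topo} provides a linear homeomorphism $\mathcal{J}:(\mathcal{C}(K)^{\ast}/\!\!\sim,\tau)\to(\Phi^{\ast},w^{\ast})$ together with the identity $\mathcal{J}\circ\widehat{\pi}=i^{\ast}$ valid on all of $\mathcal{C}(K)^{\ast}$, see~(\ref{eq:i*}). Restricting a homeomorphism to an arbitrary subset, endowed on both sides with the corresponding subspace topology, yields again a homeomorphism onto its image; applied to the subset $\widehat{\pi}(\mathcal{M}^{1}(K))=\mathcal{M}^{1}(K)/\!\!\sim$ of $\mathcal{C}(K)^{\ast}/\!\!\sim$, this shows that $\mathcal{J}$ maps $(\mathcal{M}^{1}(K)/\!\!\sim,\tau)$ homeomorphically onto $(\mathcal{J}(\widehat{\pi}(\mathcal{M}^{1}(K))),w^{\ast})=(i^{\ast}(\mathcal{M}^{1}(K)),w^{\ast})$, and the bijection is affine because $\mathcal{J}$ is linear (and $\mathcal{M}^{1}(K)/\!\!\sim$ is convex, being the linear image of the convex set $\mathcal{M}^{1}(K)$). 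The displayed identity $\mathcal{J}\circ\widehat{\pi}(\mu)=i^{\ast}(\mu)=\mu|_{\Phi}$ is then immediate from~(\ref{eq:i*}) together with the definition of $i^{\ast}$.

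Next I would identify $i^{\ast}(\mathcal{M}^{1}(K))$ with the set $\{Q\in\Phi^{\ast}:\Vert Q\Vert=\langle Q,\boldsymbol{1}\rangle=1\}$ appearing in~(\ref{eq: K(Phi)}). For ``$\subseteq$'', if $\mu\in\mathcal{M}^{1}(K)$ and $Q=\mu|_{\Phi}$, then $\langle Q,\boldsymbol{1}\rangle=\langle\mu,\boldsymbol{1}\rangle=1$ since $\boldsymbol{1}\in\Phi$, while $1=\langle Q,\boldsymbol{1}\rangle\leq\Vert Q\Vert\leq\Vert\mu\Vert_{\ast}=1$ by~(\ref{eq:M1(K)}) and $\Vert\boldsymbol{1}\Vert_{\infty}=1$, so that $\Vert Q\Vert=1$. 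For ``$\supseteq$'', given $Q\in\Phi^{\ast}$ with $\Vert Q\Vert=\langle Q,\boldsymbol{1}\rangle=1$, the Hahn--Banach theorem furnishes an extension $\mu\in\mathcal{C}(K)^{\ast}$ of $Q$ with $\Vert\mu\Vert_{\ast}=\Vert Q\Vert=1$; since $\boldsymbol{1}\in\Phi$ one gets $\langle\mu,\boldsymbol{1}\rangle=\langle Q,\boldsymbol{1}\rangle=1$, hence $\mu\in\mathcal{M}^{1}(K)$ by~(\ref{eq:M1(K)}) and $i^{\ast}(\mu)=\mu|_{\Phi}=Q$. Finally, $\mathcal{M}^{1}(K)$ is $w^{\ast}$-compact by the Banach--Alaoglou theorem (cf.~(\ref{eq:M1(K)})) and $\widehat{\pi}$ is $(w^{\ast},\tau)$-continuous, so $\mathcal{M}^{1}(K)/\!\!\sim$ is $\tau$-compact and $K(\Phi)$ is $w^{\ast}$-compact as its homeomorphic image.

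I do not anticipate a real obstacle, since the substantive topological content --- that $(\mathcal{C}(K)^{\ast}/\!\!\sim,\tau)$ is linearly homeomorphic to $(\Phi^{\ast},w^{\ast})$ --- was already settled in Lemma~\ref{Lem_topo} via the Banach--Dieudonn\'{e} argument. The only points deserving a line of care are (a) that passing to subspace topologies preserves the homeomorphism, so that the symbol $\tau$ in the corollary legitimately denotes the topology inherited from $\mathcal{C}(K)^{\ast}/\!\!\sim$; and (b) the two-inclusion Hahn--Banach verification of the norm-and-normalization description of $K(\Phi)$. Both are routine.
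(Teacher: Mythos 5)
Your proposal is correct and follows essentially the same route as the paper, which obtains the corollary by restricting the homeomorphism of Lemma~\ref{Lem_topo} (and the identity $\mathcal{J}\circ\widehat{\pi}=i^{\ast}$) to $\mathcal{M}^{1}(K)$. The only addition is your explicit Hahn--Banach verification that $i^{\ast}(\mathcal{M}^{1}(K))$ equals $\{Q\in\Phi^{\ast}:\Vert Q\Vert=\langle Q,\boldsymbol{1}\rangle=1\}$, a fact the paper simply records in~(\ref{eq: K(Phi)}); this is a correct and welcome detail.
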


\noindent We also recall the following universal property of the quotient map
\[
\widehat{\pi}:(\mathcal{M}^{1}(K),w^{\ast})\rightarrow(\mathcal{M}^{1}
(K)/\!\!\sim,\tau)
\]

\begin{itemize}
\item (\textit{factorization lemma}) If $G:(\mathcal{M}^{1}(K),w^{\ast
})\rightarrow Z$ is a continuous map such that $\mu \sim \nu$ implies
$G(\mu)=G(\nu)$ for all $\mu,\nu \in \mathcal{M}^{1}(K)$, then there exists a
unique continuous map $H:(\mathcal{M}^{1}(K)/\!\!\sim,\tau)\rightarrow Z$ such
that $G=H\circ \widehat{\pi}$ (where $Z$ is any topological space).
\end{itemize}

\vskip5mm Combining~\eqref{eq: delta} with~\eqref{eq:i*} we obtain a canonical
injection of $K$ into $\Phi^{\ast}$ as follows:
\begin{equation}
\left \{
\begin{array}
[c]{l}%
\delta^{\Phi}:K\longrightarrow(K(\Phi),w^{\ast})\medskip \\
\delta^{\Phi}=i^{\ast}\circ \delta \quad \text{with }\delta_{x}^{\Phi}%
:=\delta^{\Phi}(x)=i^{\ast}(\delta_{x}).
\end{array}
\right.  \label{eq: detla_Phi}
\end{equation}
Therefore $\delta^{\Phi}$ defines a homeomorphism between $K$ and
$\delta^{\Phi}(K)$. In fact, it is often convenient to identify these spaces:
$K\equiv \delta^{\Phi}(K)$. Under the above notation, $\delta_{x}^{\Phi}(\phi)=\phi(x),$ for all $\phi \in \Phi.$ 
Similarly to~\eqref{eq:M1(K)}, the
$w^{\ast}$-compact convex set $K(\Phi)$ in $\Phi^{\ast}$ coincides with the
$w^{\ast}$-closed convex hull of the set $\{ \delta_{x}^{\Phi}:x\in K\}.$

\begin{definition}
[Choquet boundary]\label{def_Ch-bd}The $\Phi$-Choquet boundary $\partial
_{\Phi}(K)$ of $K$ (or simply Choquet boundary, if no confusion arises) is
defined as follows:
\[
\partial_{\Phi}(K):=\{x\in K:\mathcal{M}_{x}(\Phi)=\{ \delta_{x}\} \}.
\]

\end{definition}

\noindent Denoting by $\mathcal{C}(K)^{\ast}_{+}$ the cone of positive Borel measures on $K$, it follows easily that
\[
x\in \partial_{\Phi}(K)\text{\quad \ if and only if \quad}[\delta_{x}
]\cap \mathcal{M}^{1}(K)=\{ \delta_{x}\}=[\delta_{x}]\cap \mathcal{C}
(K)_{+}^{\ast}\,,
\]
where $[\delta_{x}]$ denotes the equivalent class of the Dirac measure
$\delta_{x},$ that is,
\begin{equation}
\mu \in \lbrack \delta_{x}]\quad \text{if and only if\quad}\langle \mu,\phi
\rangle=\phi(x),\; \text{for all }\phi \in \Phi. \label{eq: equiv-delta}
\end{equation}
In addition, it is well-known (see \cite{LNV}, \cite{Ph}, eg.) that a point
$x\in K$ belongs to the $\Phi$-Choquet boundary of $K$ if and only if the
canonical injection $\delta^{\Phi}$, given in (\ref{eq: detla_Phi}), maps this
point to an extreme point of the $w^{\ast}$-compact convex set $K(\Phi)$, that
is,
\begin{equation} \label{eq:Ch-boundary}
\partial_{\Phi}(K)=\left \{  x\in K:\; \delta_{x}^{\Phi}\in \mathrm{Ext\,}
K(\Phi)\right \}  .
\end{equation}

\subsection{Choquet convexity.} 
\label{ssec:2.1}
Let us now recall the following definition (eg. \cite{ChMe}, \cite{LNV}, \cite[Prop. 3.6]{Ph}).

\begin{definition}
[Choquet convex function]\label{def-choquet-conv}A continuous function
$f\in \mathcal{C}(K)$ is said to be $\Phi$-Choquet-convex (or simply,
Choquet-convex, if no ambiguity arises), if for every $x\in K$ it holds:
\[
f(x)\leq \int_{K}fd\mu,\quad \text{for all }\mu \in \mathcal{M}_{x}(\Phi).
\]
\end{definition}

\noindent The set of all Choquet-convex functions will be denoted by
\[
\Gamma_{\Phi}(K):=\{f:K\rightarrow{\mathbb{R}}\,\, \text{Choquet-convex
function}\} \; \subset \; \mathcal{C}(K).
\]

\noindent\textit{Notation.} For any convex subset $S$ of a locally convex space $E$, we set
$$ \Gamma(S):=\{f:S\rightarrow{\mathbb{R}}\,\, \text{convex continuous
function}\}.$$
We also denote by $\Gamma^{>} (S)$ the set of upper semicontinuous (in short, usc)
convex functions, and by $\Gamma^{<} (S)$ the set of lower
semicontinuous (in short, lsc) convex functions on $S$.
\smallskip

\begin{remark}[Compatibility of Choquet convexity]\label{rem-compat-1}
If $K$ is a convex subset of a locally convex space $E$ and $\Phi=\mathrm{Aff}(K)$ is the set of affine continuous functions on $K$, 
then $\delta^{\Phi}$ is an affine homeomorphic bijection between $K$ and $K(\Phi)$ and we have:
\begin{itemize}
\item[] $f\in \Gamma_{\Phi}(K)$ (Choquet convex) if and only if $f\in\Gamma(K)$ (convex continuous). 
\end{itemize}
\end{remark}
\smallskip

Before we proceed, let us recall from \cite[Key Lemma]{LNV} the following result.

\begin{lemma}
[Key Lemma]\label{key-lemma}For every continuous function on $K$ it holds:
\[
\left \{  \int_{K}fd\mu:\; \mu \in \mathcal{M}_{x}(\Phi)\right \}  \,=\, \left[
\underset{\phi \in \Phi,\, \phi \leq f}{\sup}\phi(x),\, \underset{\phi \in
\Phi,\, \phi \geq f}{\inf}\phi(x)\right] .
\]
Therefore, we deduce:
\[
\inf_{\mu \in \mathcal{M}_{x}(\Phi)}\int_{K}fd\mu\,\,=\underset{\phi \in \Phi
,\, \phi \leq f}{\sup}\phi(x)\,\,\leq\, f(x).
\]

\end{lemma}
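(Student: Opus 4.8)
The plan is to realize the target set as the image of the $w^{\ast}$-compact convex set $\mathcal{M}_{x}(\Phi)$ under the affine $w^{\ast}$-continuous functional $\mu\mapsto\langle\mu,f\rangle=\int_{K}fd\mu$. Since $\mathcal{M}_{x}(\Phi)$ is the intersection of the $w^{\ast}$-compact set $\mathcal{M}^{1}(K)$ with the $w^{\ast}$-closed affine subspace $\{\mu:\langle\mu,\phi\rangle=\phi(x)\text{ for all }\phi\in\Phi\}$, it is itself $w^{\ast}$-compact, convex and nonempty (it contains $\delta_{x}$); hence its image is a compact interval $[a,b]\subset\R$ with $a=\min_{\mu\in\mathcal{M}_{x}(\Phi)}\int_{K}fd\mu$ and $b=\max_{\mu\in\mathcal{M}_{x}(\Phi)}\int_{K}fd\mu$. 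The whole problem thus reduces to showing $a=\sup_{\phi\in\Phi,\,\phi\le f}\phi(x)$ and $b=\inf_{\phi\in\Phi,\,\phi\ge f}\phi(x)$, and by replacing $f$ by $-f$ it suffices to treat the second identity only.

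One inequality is immediate: if $\phi\in\Phi$ and $\phi\ge f$, then for every $\mu\in\mathcal{M}_{x}(\Phi)$ we have $\int_{K}fd\mu\le\int_{K}\phi \,d\mu=\phi(x)$, so $b\le m$, where $m:=\inf_{\phi\in\Phi,\,\phi\ge f}\phi(x)$. For the reverse inequality I would produce a measure $\mu\in\mathcal{M}_{x}(\Phi)$ with $\int_{K}fd\mu=m$ by a Hahn--Banach argument. Define the sublinear functional $p:\mathcal{C}(K)\to\R$ by $p(g):=\inf\{\phi(x):\phi\in\Phi,\ \phi\ge g\}$; this is finite --- indeed $\Phi$ contains the constants and $g$ is bounded, so the infimum is over a nonempty set, and since any $\phi\ge g$ satisfies $\phi\ge-\|g\|_{\infty}$ on $K$ one gets $-\|g\|_{\infty}\le p(g)\le\|g\|_{\infty}$ --- positively homogeneous and subadditive (add two admissible $\phi$'s). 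On the subspace $\Phi+\R f$ define $L(\phi+tf):=\phi(x)+tm$. This is well defined: the only possible ambiguity occurs when $f\in\Phi$, in which case $m=f(x)$ (the value $f(x)$ is attained at $\phi=f$, and no smaller value is possible because $\phi\ge f$ on $K$ forces $\phi(x)\ge f(x)$), so $L$ then agrees with evaluation at $x$.

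The key verification is $L\le p$ on $\Phi+\R f$. For $g=\phi+tf$ with $t\ge0$ and any admissible $\psi\in\Phi$ with $\psi\ge\phi+tf$, rescaling gives $(\psi-\phi)/t\ge f$ when $t>0$ (the case $t=0$ is pointwise positivity), hence $\psi(x)\ge\phi(x)+tm=L(g)$; taking the infimum over $\psi$ yields $L(g)\le p(g)$. For $t<0$, I instead bound $L(g)$ from above by $\sup\{\psi(x):\psi\in\Phi,\ \psi\le g\}$: writing $t=-s$ with $s>0$ and choosing $\chi\in\Phi$ with $\chi\ge f$ and $\chi(x)$ close to $m$, the function $\phi-s\chi\in\Phi$ lies below $g$ and has value close to $\phi(x)-sm=L(g)$; then $\sup\{\psi(x):\psi\le g\}\le\inf\{\phi(x):\phi\ge g\}=p(g)$ because $\psi\le g\le\phi$ implies $\psi(x)\le\phi(x)$. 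So $L\le p$ throughout. Hahn--Banach now extends $L$ to a linear $\widetilde{L}$ on $\mathcal{C}(K)$ with $\widetilde{L}\le p$; from $p(g)\le\|g\|_{\infty}$ applied to $\pm g$ we get $\|\widetilde{L}\|\le1$, while $\widetilde{L}(\mathbf{1})=L(\mathbf{1})=1$, so $\widetilde{L}$ is a norm-one functional with $\widetilde{L}(\mathbf{1})=1$; this forces positivity (if $g\ge0$ and $\|g\|_{\infty}\le1$ then $\|\mathbf{1}-g\|_{\infty}\le1$, whence $\widetilde{L}(g)\ge0$, and one concludes by homogeneity), so $\widetilde{L}$ is identified with a measure $\mu\in\mathcal{M}^{1}(K)$. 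Since $\widetilde{L}$ extends $L$, we have $\langle\mu,\phi\rangle=\phi(x)$ for all $\phi\in\Phi$, i.e. $\mu\in\mathcal{M}_{x}(\Phi)$, and $\int_{K}fd\mu=\widetilde{L}(f)=m$. This gives $b\ge m$, hence $b=m$; the argument applied to $-f$ gives $a=\sup_{\phi\le f}\phi(x)$, which proves the identity. The displayed ``Therefore'' is then immediate, since the left endpoint equals $\inf_{\mu\in\mathcal{M}_{x}(\Phi)}\int_{K}fd\mu\le\int_{K}fd\delta_{x}=f(x)$. The main obstacle I anticipate is the bookkeeping in the domination $L\le p$ --- in particular getting the sign of $t$ right (the case $t<0$ genuinely needs the two-sided squeeze $\sup\{\psi(x):\psi\le g\}\le p(g)$ rather than a direct estimate) and the degenerate case $f\in\Phi$ where well-definedness of $L$ rests on pointwise positivity; the rest is a routine application of Hahn--Banach together with the identification of positive normalized functionals on $\mathcal{C}(K)$ with probability measures.
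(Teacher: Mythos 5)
Your proof is correct. Note that the paper does not actually prove this lemma: it is recalled verbatim from \cite[Key Lemma]{LNV}, so there is no in-paper argument to compare against. What you have written is a complete, self-contained proof and it is essentially the classical one (the argument in Lukes--Netuka--Vesely, going back to Phelps' treatment of Choquet theory): reduce to computing the endpoints of the interval obtained as the image of the $w^{\ast}$-compact convex set $\mathcal{M}_{x}(\Phi)$ under $\mu\mapsto\langle\mu,f\rangle$, prove the easy inequality by integrating $\phi\ge f$ against a representing measure, and recover the reverse inequality by a Hahn--Banach extension dominated by the sublinear functional $p(g)=\inf\{\phi(x):\phi\in\Phi,\ \phi\ge g\}$, followed by the standard identification of norm-one functionals fixing $\mathbf{1}$ with probability measures. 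The delicate points you flagged --- the case $t<0$ in the domination $L\le p$, which indeed requires the two-sided squeeze through $\sup\{\psi(x):\psi\le g\}$, and the degenerate case $f\in\Phi$ for well-definedness of $L$ --- are handled correctly, and the finiteness, positive homogeneity and subadditivity of $p$ all rest, as they must, on the standing hypotheses that $\Phi$ contains the constants and that $f$ is bounded.
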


It follows directly from Definition~\ref{def-choquet-conv} and the above
Key Lemma that
\[
f\in \Gamma_{\Phi}(K)\quad \Longleftrightarrow \quad f(x)=\inf_{\mu \in
\mathcal{M}_{x}(\Phi)}\int_{K}fd\mu=\underset{\phi \in \Phi,\, \phi \leq f}{\sup
}\phi(x),\quad \text{for all }x\in K.
\]

\section{Extending Choquet convexity on topological spaces.}\label{sec:3}

Using the notation of the previous section, we fix a compact space $K$ and a closed subspace
$\Phi$ of $\mathcal{C}(K)$ satisfying conditions (i) and (ii). We also consider
the $w^{\ast}$-compact convex subset $K(\Phi)$ of $(\Phi^{\ast},w^{\ast})$
defined in (\ref{eq: K(Phi)}).

\subsection{Convex-trace functions.}\label{ssec:3-1}

In the spirit of the definition of Choquet boundary, using the identification
of the topological space $K$ with $\delta^{\Phi}(K)$ and the classical
convexity of functions defined on the affine variety $K(\Phi)$  we introduce a
new notion of convexity for real-valued functions defined on $K$. The main idea is that since $K(\Phi)$ is an affine variety, we can define therein convex functions  (in the classical sense) and consider their traces on $\delta^{\Phi}(K)$. We therefore obtain the class of \textit{convex-trace} functions, which can either be continuous ---case in which we recover the class $\Gamma_{\Phi}(K)$ of Choquet convex functions (Corollary~\ref{equiv})---
or more generally upper (or lower) semicontinuous, yielding a natural extension of Choquet convexity, that can be used to 
extend results related to the generalized Bauer's maximum principle. More precisely, we give the following definition.

\begin{definition}
[Convex-trace functions]\label{def_convex-trace}Let $K$ be a compact
space and $\Phi$ a closed subspace of $\mathcal{C}(K)$ that
separates points in $K$ and contains the constant functions. Let further
$\delta^{\Phi}:K\rightarrow(K(\Phi),w^{\ast})$ be the canonical injection given in (\ref{eq: detla_Phi}). A function $f:K\rightarrow
\mathbb{R}$ is called:

\begin{itemize}
\item[(i)] continuous convex-trace with respect to $\Phi$ (or simply continuous $\Phi$-convex-trace), denoted $f\in T\mathcal{C}(K,\Phi),$ if there exists $F\in \Gamma
\mathrm{\,}(K(\Phi))$ such that
\begin{equation}
f=F\circ \delta^{\Phi}. \label{eq:TC}
\end{equation}

\item[(ii)] usc (respectively, lsc) convex-trace with respect to $\Phi$, or simply, usc (respectively lsc) $\Phi$-convex-trace,
denoted $f\in T\mathcal{C}^{>}(K,\Phi)$ (respectively, $f\in T\mathcal{C}^{<}(K,\Phi)$), if there exists $F\in \Gamma^{>}\mathrm{\,}(K(\Phi))$
(respectively, $\Gamma^{<}\mathrm{\,}(K(\Phi))$) such that (\ref{eq:TC}) holds.
\end{itemize}
\end{definition}

In other words, identifying $K$ with its canonical image $\delta^{\Phi}(K)$ in
$\Phi^{\ast}$, a function $f$ is $\Phi$-convex-trace on $K$ whenever $f$ is
the trace of a (usual) convex function on (the affine variety) $K(\Phi)$.
Since every $\phi \in \Phi \subset \mathcal{C}(K)$ is obviously a linear $w^{\ast
}$-continuous functional on $\Phi^{\ast},$ it follows directly that
\[
\Phi \subset T\mathcal{C}(K,\Phi).
\]
Notice that both Choquet convexity (Definition~\ref{def-choquet-conv}) and
trace-convexity (Definition~\ref{def_convex-trace}) depend on the choice of
the closed subspace $\Phi$ of $\mathcal{C}(K)$. This being said, whenever no
confusion arises, we shall drop $\Phi$ and simply talk about usc, lsc or continuous convex-trace functions on $K$, denoting their class by $T\mathcal{C}^{>}(K)$, $T\mathcal{C}^{<}(K)$ and $T\mathcal{C}(K)$ respectively. \smallskip

\begin{remark}[Compatibility of trace-convexity]\label{rem-compat-2}
Similarly to Remark~\ref{rem-compat-1}, if $K$ is a convex subset of a locally convex space $E$, then taking $\Phi=\mathrm{Aff}(K)$, the convex sets 
$K(\Phi)$ and $K$ can be identified via the affine homeomorphism $\delta^{\Phi}$, and the notion of trace-convexity coincides with the classical convexity on $K$, that is:
$$T\mathcal{C}(K)=\Gamma(K), \quad T\mathcal{C}^{>}(K)=\Gamma^{>}(K)\quad \text{and} \quad T\mathcal{C}^{<}(K)=\Gamma^{<}(K).$$
\end{remark}

\begin{remark}[$\Phi $-stability] \label{rem-Phi-stable} A set of functions $\mathcal{B}\subset \mathbb{R}^{K}$ is called $\Phi$-stable, if 
	$\Phi +\mathcal{B}\subset \mathcal{B}.$ It is
	straightforward to see that $\Phi $, $\mathcal{C}(K)$ and $\mathbb{R}^{K}$
	are $\Phi $-stable. It follows easily by Definition~\ref{def-choquet-conv}
	that $\Gamma _{\Phi }(K)$ is $\Phi $-stable. We leave the reader to verify from Definition~\ref{def_convex-trace} that the sets of (usc, lsc, continuous) convex-trace functions $T\mathcal{C}^{>}(K),$ $T\mathcal{C}^{>}(K)$ and respectively, $T\mathcal{C}(K)$ are $\Phi$-stable.
\end{remark}

\subsection{Trace-convexification $f\mapsto\widehat{f}$.}  \label{ssec:3-2}

Let us fix an arbitrary continuous function $f\in \mathcal{C}(K)$. For each $\mu \in \mathcal{M}^{1}(K)$, we define the $w^{\ast}$-open set
\begin{equation}\label{eq:W}
\mathcal{W}_{\mu,f}:=\{ \nu \in \mathcal{C}(K)^{\ast}:\,|\langle \nu-\mu
,f\rangle|\,<1\}
\end{equation}
and we set
\begin{equation}
\mathcal{O}_{f}:=\mathrm{co}\, \left(  \underset{\mu \in \mathcal{M}^{1}
(K)}{\bigcup}\mathcal{W}_{\mu,f}\right)  \supseteq \mathcal{M}^{1}
(K),\label{eq:Of}
\end{equation}
where $\mathrm{co}(\cup_{\mu \in \mathcal{M}^{1}(K)}\mathcal{W}_{\mu,f})$
denotes the convex hull of the $w^{\ast}$-open set $\cup_{\mu \in
\mathcal{M}^{1}(K)}\mathcal{W}_{\mu,f}$. We have that $\mathcal{O}_{f}$ is
$w^{\ast}$-open and convex subset of $\mathcal{C}(K)^{\ast}$. We define
$\widehat{f}$ as follows:
\begin{equation}
\left \{
\begin{array}
[c]{l}
\widehat{f}:K\rightarrow \mathbb{R}\medskip \\
\widehat{f}(x)=\, \underset{\mu \in \lbrack \delta_{x}]\cap \mathcal{O}_{f}}{\inf
}\, \int_{K}\,f\,d\mu.
\end{array}
\right.  \label{eq:f-hat}
\end{equation}
It is straightforward to see that $$f(x)=\langle \delta_x,f\rangle \geq \widehat{f}(x),\quad\text{for all }x\in K.$$
Moreover, since $\int_{K}\, \phi \,d\mu=\phi(x)$ for every $\phi \in \Phi$ and every
$\mu \in \lbrack \delta_{x}]$, it follows that 
$$\widehat{\phi}=\phi \quad \text{for every }\, \phi \in \Phi\,.$$ 
We shall now show that $\widehat{f}$ is a convex-trace function and consequently, $\widehat{f}$ can be seen as a \textit{trace-convexification}
of $f$ on $K$. 

\begin{theorem}
[$\widehat{f}$ is convex-trace]\label{thm_f-hat}For every $f\in \mathcal{C}(K)$
there exists a convex $w^{\ast}$-continuous function $F_{f}:(K(\Phi),w^{\ast
})\rightarrow \mathbb{R}$ such that: 
\begin{itemize}
\item[$\mathrm{(i).}$] $\widehat{f}=F_{f}\circ \delta^{\Phi}$ \ $($therefore, $\widehat{f}\in
T\mathcal{C}(K,\Phi))$ ;   \smallskip 
\item[$\mathrm{(ii).}$]  $-(\Vert f\Vert_{\infty}+1)\leq F_{f}(Q)\leq \Vert f\Vert_{\infty
},\quad$for all $Q\in K(\Phi).$
\end{itemize}
\end{theorem}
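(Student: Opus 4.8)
The plan is to construct the function $F_f$ on $\mathcal{C}(K)^\ast$ first, show it descends to a well-defined convex $w^\ast$-continuous function on the quotient (hence on $K(\Phi)$ by Corollary~\ref{topo}), and then identify the composition with $\delta^\Phi$ as exactly $\widehat{f}$. Concretely, I would define
\[
g_f:\mathcal{C}(K)^\ast\to\mathbb{R},\qquad g_f(\nu):=\langle\nu,f\rangle
\]
and observe that $g_f$ is linear and $w^\ast$-continuous. The function $\widehat{f}$ is essentially obtained by taking, for each $x$, the infimum of $g_f$ over the slice $[\delta_x]\cap\mathcal{O}_f$. The right object to consider is the infimal-type function on $\mathcal{C}(K)^\ast/\!\!\sim$ given by $[\mu]\mapsto\inf\{\langle\nu,f\rangle:\nu\sim\mu,\ \nu\in\mathcal{O}_f\}$; since $\mathcal{O}_f$ is convex and $g_f$ is linear, the infimal projection of a convex function along the linear quotient map $\widehat{\pi}$ is convex, which gives convexity of the induced function. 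The identity $\widehat{f}=F_f\circ\delta^\Phi$ then follows because $[\delta_x]$ is precisely the fibre $\widehat{\pi}^{-1}(\delta^\Phi_x)$ by \eqref{eq: equiv-delta} and Corollary~\ref{topo}.

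The main point requiring care is continuity of $F_f$ on $(K(\Phi),w^\ast)$, and I expect this to be the chief obstacle. The role of the auxiliary open set $\mathcal{O}_f$ (built from the neighbourhoods $\mathcal{W}_{\mu,f}$ of \eqref{eq:W}) is exactly to force this: because $\mathcal{O}_f\supseteq\mathcal{M}^1(K)$ is $w^\ast$-open and convex, every point of $\mathcal{M}^1(K)$ is interior to $\mathcal{O}_f$, so on a $w^\ast$-neighbourhood of any $[\mu_0]\in\mathcal{M}^1(K)/\!\!\sim$ the slices $[\mu]\cap\mathcal{O}_f$ vary ``openly'', and the defining inequality $|\langle\nu-\mu,f\rangle|<1$ pins the infimum within bounded oscillation. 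I would argue as follows: restricted to $\widehat{\pi}(\mathcal{M}^1(K))=\mathcal{M}^1(K)/\!\!\sim$, the function $[\mu]\mapsto F_f(\mathcal{J}([\mu]))$ is the marginal of the $w^\ast$-continuous linear function $g_f$ over the $w^\ast$-open convex set $\mathcal{O}_f$; using that $\widehat{\pi}$ is an open map onto its image over the compact set and that $g_f$ restricted to the $w^\ast$-compact set $\mathcal{M}^1(K)$ is continuous, one gets upper semicontinuity immediately (infimum of continuous functions over a fixed compact family, after intersecting with the $w^\ast$-closed translates) and lower semicontinuity from the openness of $\mathcal{O}_f$ (small $w^\ast$-perturbations of $\mu$ keep $[\mu]\cap\mathcal{O}_f$ from shrinking, hence the infimum cannot jump down). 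Since $K(\Phi)$ is $w^\ast$-compact and $F_f$ is both usc and lsc there, it is continuous; being a pointwise infimum of affine $w^\ast$-continuous functions (the maps $Q\mapsto\langle\mu|_\Phi{=}Q\ \text{lifts}\rangle$, more precisely $Q\mapsto\inf\{\langle\nu,f\rangle:i^\ast(\nu)=Q,\ \nu\in\mathcal{O}_f\}$) it is also convex.

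For part (ii), the estimate is a direct computation: for any $x\in K$ we have $\delta_x\in[\delta_x]$ and $\delta_x\in\mathcal{M}^1(K)\subseteq\mathcal{O}_f$, so
\[
F_f(\delta^\Phi_x)=\widehat{f}(x)\le\langle\delta_x,f\rangle=f(x)\le\|f\|_\infty,
\]
which gives the upper bound on the (weak$^\ast$-dense in $K(\Phi)$) set $\delta^\Phi(K)$ and hence, by $w^\ast$-continuity of $F_f$ and density, on all of $K(\Phi)$. For the lower bound, any admissible $\nu\in[\delta_x]\cap\mathcal{O}_f$ satisfies $\nu\in\mathcal{W}_{\mu,f}$ for some convex combination, but more simply one writes $\langle\nu,f\rangle=\langle\nu-\mu,f\rangle+\langle\mu,f\rangle$ with $\mu\in\mathcal{M}^1(K)$, using the definition of $\mathcal{O}_f$ as a convex hull of the $\mathcal{W}_{\mu,f}$: this yields $\langle\nu,f\rangle\ge -1-\|f\|_\infty$, so $\widehat{f}(x)\ge -(\|f\|_\infty+1)$, and again $w^\ast$-continuity plus density of $\delta^\Phi(K)$ in $K(\Phi)$ propagates the bound to all of $K(\Phi)$. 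The only subtlety in the lower bound is that a generic point of $\mathcal{O}_f=\mathrm{co}(\bigcup_\mu\mathcal{W}_{\mu,f})$ is a convex combination $\sum_i t_i\nu_i$ with $\nu_i\in\mathcal{W}_{\mu_i,f}$; but then $\langle\nu,f\rangle=\sum_i t_i\langle\nu_i,f\rangle\ge\sum_i t_i(\langle\mu_i,f\rangle-1)\ge -\|f\|_\infty-1$ since each $\mu_i\in\mathcal{M}^1(K)$, which closes the argument.
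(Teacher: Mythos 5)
Your overall architecture is the same as the paper's: define the marginal (infimal projection) of $\nu\mapsto\langle\nu,f\rangle$ over the fibres $[\mu]\cap\mathcal{O}_f$, obtain convexity from the convexity of $\mathcal{O}_f$ and the linearity of the quotient, identify the restriction to $\delta^{\Phi}(K)$ with $\widehat{f}$, and read off the bounds in (ii) from the structure of $\mathcal{O}_f$ (your convex-combination computation for the lower bound is exactly the estimate~\eqref{Bblewo} in the paper's proof). However, your treatment of continuity --- which you rightly single out as the chief obstacle --- has a genuine gap, and in fact your two semicontinuity arguments are interchanged. Openness of $\mathcal{O}_f$, together with openness of the linear quotient map $\widehat{\pi}$, gives \emph{lower} semicontinuity of the fibre multifunction $Q\mapsto (i^{\ast})^{-1}(Q)\cap\mathcal{O}_f$, and lsc of the fibres yields \emph{upper} semicontinuity of the marginal (a near-minimizer in the fibre of $Q_0$ persists into nearby fibres). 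It does nothing for \emph{lower} semicontinuity of the marginal: to prevent the infimum from jumping down one must rule out that nearby fibres acquire new points where $\langle\cdot,f\rangle$ is much smaller, which is a usc/compactness condition on the fibre map --- and the fibres $[\mu]\cap\mathcal{O}_f$ (a $w^{\ast}$-closed affine translate intersected with a $w^{\ast}$-open set) are neither closed nor compact, so the standard marginal-function lemma does not apply. Your phrase ``the fibres do not shrink, hence the infimum cannot jump down'' addresses the wrong direction; likewise, the marginal is not an ``infimum of continuous functions over a fixed family'', since the index set varies with $Q$. The paper closes this gap by a different mechanism: the function $G_f(\mu)=\inf_{\nu\in[\mu]\cap\mathcal{O}_f}\langle\nu,f\rangle$ is convex and locally bounded above on the $w^{\ast}$-open convex set $\mathcal{O}_f$, and a convex function that is finite and locally bounded above on an open subset of a locally convex space is automatically continuous (\cite[Theorem 5.42]{AB}); continuity then descends to the quotient via the factorization lemma and Corollary~\ref{topo}. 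Your proof needs this theorem (or an equivalent convexity-based argument) to obtain lower semicontinuity.

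Two smaller slips. First, $\delta^{\Phi}(K)$ is $w^{\ast}$-compact, hence closed, and is \emph{not} dense in $K(\Phi)=\overline{\mathrm{co}}^{w^{\ast}}(\delta^{\Phi}(K))$, so the bounds in (ii) cannot be ``propagated by density''; they should be verified directly on every fibre: $F_f(Q)\leq\langle\mu,f\rangle\leq\Vert f\Vert_{\infty}$ for any $\mu\in\mathcal{M}^{1}(K)$ with $i^{\ast}(\mu)=Q$, while your estimate $\langle\nu,f\rangle\geq -\Vert f\Vert_{\infty}-1$ holds for \emph{every} $\nu\in\mathcal{O}_f$ and hence for the infimum over any fibre. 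Second, a pointwise infimum of affine functions is concave, not convex; the convexity of $F_f$ should rest solely on your infimal-projection argument, which is correct and matches Step~1 of the paper's proof.
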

\smallskip

\begin{proof}  Let $f\in \mathcal{C}(K),$ and let $\mathcal{O}_{f}$ be the
$w^{\ast}$-open convex set defined in (\ref{eq:Of}). We define:
\begin{align}
G_{f}:(\mathcal{O}_{f},w^{\ast})  &  \rightarrow{\mathbb{R}}\nonumber \\
\mu &  \mapsto \, \underset{\nu \in \lbrack \mu]\cap \mathcal{O}_{f}}{\inf}\,
\int_{K} f\,d\nu,
\end{align}
where $[\mu]$ is the class of equivalence of the measure $\mu \in
\mathcal{O}_{f}\subset \mathcal{C}(K)^{\ast}$, according to~\eqref{eq:equiv}.
It is easy to see that:
\begin{equation}
-\Vert f\Vert_{\infty}\,\Vert \mu \Vert_{\ast}-1\,\leq \,G_{f}(\mu)\,\leq \,\int
_{K}f\,d\mu \,\leq \,\Vert f\Vert_{\infty}\Vert \mu \Vert_{\ast},\quad \text{for all
}\mu \in \mathcal{O}_{f}, \label{Bblewo}
\end{equation}
which guarantees that $G_{f}$ is well-defined (it takes finite values) on
$\mathcal{O}_{f}$. Notice also that
\[
(G_{f}\circ \delta)(x):=G_{f}(\delta_{x})=\widehat{f}(x)\leq f(x),\quad
\text{for all }x\in K.
\]

\smallskip

\textit{Step 1.} The function $G_{f}$ is convex on $\mathcal{O}_{f}
$.\smallskip \newline To this end, let $\mu_{1},\mu_{2}\in \mathcal{C}(K)^{\ast
}$ and $t\in(0,1)$ and fix any $\varepsilon>0$. Then there exist $\nu_{i}
\in \lbrack \mu_{i}]\cap \mathcal{O}_{f},$ for $i\in \{1,2\},$ such that
\[
\langle \nu_{i},f\rangle:=\int_{K}\,f\,d\nu \leq G_{f}(\mu_{i})+\varepsilon.
\]
Since $t\nu_{1}+(1-t)\nu_{2}\in \lbrack t\mu_{1}+(1-t)\mu_{2}]\cap
\mathcal{O}_{f}$ (recall that $\mathcal{O}_{f}$ is a convex set) we obtain
\[
G_{f}(t\mu_{1}+(1-t)\mu_{2})\leq \langle t\nu_{1}+(1-t)\nu_{2},f\rangle
=t\langle \nu_{1},f\rangle+(1-t)\langle \nu_{2},f\rangle \leq G_{f}
(\mu_{1})+(1-t)G_{f}(\mu_{2})+\varepsilon.
\]
Since $\varepsilon>0$ is arbitrary, convexity of $G_{f}$ follows.

\medskip

\textit{Step 2.} $G_{f}$ is locally bounded from above on the $w^{\ast}$-open
set $\mathcal{O}_{f}$.\smallskip \newline Let $\mu_{0}\in \mathcal{O}_{f}$ and
define the $w^{\ast}$-open set $\mathcal{V}_{\mu_{0},f}:=\{ \mu \in
\mathcal{O}_{f}:\,|\langle \mu-\mu_{0},f\rangle|\,<1\}.$ Then for every $\mu
\in \mathcal{V}_{\mu_{0}}$ we have%
\[
G_{f}(\mu)\, \leq \, \int_{K}\,f\,d\mu \,<\,||f||_{\infty}\,||\mu_{0}||_{\ast
}+1,
\]
which yields that $G_{f}$ is bounded from above on $\mathcal{V}_{\mu_{0}}.$

\medskip

\textit{Step 3.} $G_{f}$ is convex $w^{\ast}$-continuous.\smallskip \newline
Since $G_{f}$ is convex, takes finite values on $\mathcal{O}_f$ and it $w^{\ast}$-locally bounded
from above, the assertion follows directly from \cite[Theorem 5.42]{AB}
applied to the $w^{\ast}$-open subset $\mathcal{O}_{f}$ of $\mathcal{C}
(K)^{\ast}$ (where $\mathcal{C}(K)^{\ast}$ equipped with its $w^{\ast}
$-topology is considered as a locally convex space).

\medskip

\textit{Step 4.} $\widehat{f}$ is continuous convex-trace.\smallskip \newline
Notice that by its very definition, $G_{f}(\mu)=G_{f}(\mu^{\prime})$, whenever
$\mu \thicksim \mu^{\prime}$ (with respect to (\ref{eq:equiv})). Since $G_{f}$
is $w^{\ast}$-continuous, then its restriction
\[
{G_{f}}_{|\mathcal{M}^{1}(K)}:(\mathcal{M}^{1}(K),w^{\ast})\rightarrow
\mathbb{R}
\]
is also $w^{\ast}$-continuous. Therefore, by the factorization lemma and the
topological quotient, there exists a unique $\tau$-continuous function
$H_{f}:(\mathcal{M}^{1}(K)/\sim,\tau)\rightarrow{\mathbb{R}}$ such that
${G_{f}}_{|\mathcal{M}^{1}(K)}=H_{f}\circ \widehat{\pi}$. It is straightforward t
see that $H_{f}$ is convex, since $G_{f}$ is convex and $\widehat{\pi}$ is affine.
Using Corollary~\ref{topo}, we deduce that the function $F_{f}:(K(\Phi
),w^{\ast})\rightarrow \mathbb{R}$ defined by $F_{f}=H_{f}\circ \mathcal{J}%
^{-1}$ is convex $w^{\ast}$-continuous. Thus, we have that $F_{f}\circ
i^{\ast}={G_{f}}_{|\mathcal{M}^{1}(K)}$ on $\mathcal{M}^{1}(K)$. Moreover, we
deduce from the definition of $\widehat{f}$ in (\ref{eq:f-hat}) that:
\[
F_{f}(\delta_{x}^{\Phi})=(H_{f}\circ \mathcal{J}^{-1})(\delta_{x}^{\Phi}
)=H_{f}([\delta_{x}])=G_{f}(\delta_{x})=\widehat{f}(x),\quad \text{for all
}x\in K.
\]
This shows that $\widehat{f}$ is the trace on $K\equiv \delta^{\Phi}(K)$ of the
convex $w^{\ast}$-continuous function $F_{f}$ as asserted. The inequality in
$(ii)$ follows from the formula (\ref{Bblewo}). \end{proof}

\subsection{Trace-convexity vs Choquet convexity.} \label{ssec:3-3}

In this subsection we establish (see forthcoming Corollary~\ref{equiv}~(i)$\Leftrightarrow$(iv)) that the class of Choquet convex functions coincides with the class of continuous
convex-trace functions. This latter class admits a natural extension to upper semicontinuous functions, which consists of the most natural
framework to state a generalized maximum principle (see Subsection~\ref{ssec:3-4}).\smallskip

At this stage let us recall from~\cite{Ba1} the $\Phi$-conjugate
$f^{\times}:\Phi \rightarrow{\mathbb{R}}$ of a proper bounded from below
function $f:K\rightarrow{\mathbb{R}}\cup \left \{+\infty \right \}$, which is defined as follows:
\begin{equation}
f^{\times}(\phi):=\sup_{x\in K}\,\{ \phi(x)-f(x)\},\hspace{3mm}\phi \in
\Phi.\label{eq: fx}
\end{equation}
The function $f^{\times},$ being defined in a Banach space $\Phi,$ admits a
usual Fenchel defined by
\[
\left \{
\begin{array}
[c]{l}
(f^{\times})^{\ast}:\Phi^{\ast}\rightarrow{\mathbb{R}}\cup \left \{
+\infty \right \}  \medskip \\
(f^{\times})^{\ast}(Q):=\underset{\phi \in \Phi}{\sup}\, \{ \langle Q,\phi
\rangle-f^{\times}(\phi)\},\hspace{3mm}\text{for all }Q\in \Phi^{\ast}.
\end{array}
\right.  
\]
Restricting the above onto $\delta^{\Phi}(K)$ yields a function $f^{\times
\times}:K\rightarrow{\mathbb{R}}\cup \left \{  +\infty \right \}  $ defined as
follows:
\[
f^{\times \times}(x):=\left ((f^{\times})^{\ast}\circ \delta^{\Phi}\right )(x)=\,\sup_{\phi
\in \Phi}\,\left \{ \langle \delta_{x}^{\Phi},\phi \rangle-f^{\times}(\phi)\right \}=\,\sup
_{\phi \in \Phi}\,\left \{ \phi(x)-f^{\times}(\phi)\right \}, \hspace{3mm}\text{for all }x\in K.
\]
It is easily seen that $f^{\times \times}\leq f$. Moreover, we have
$\phi(x)-f^{\times}(\phi)\leq f(x)$, for all $x\in K$. Taking into account that $\Phi$ contains the
constant functions, we deduce from the above and Lemma \ref{key-lemma} (Key
Lemma) that
\begin{equation}
f^{\times \times}(x)=\underset{\phi \in \Phi,\, \phi \leq f}{\sup}\phi(x)=\inf
_{\mu \in \mathcal{M}_{x}(\Phi)}\int_{K}fd\mu.\label{eq: fxx-sup}
\end{equation}
In particular we obtain the following result.

\begin{proposition}[Relation between $\widehat{f}$ and $f ^ {\times\times}$]
\label{times-hat} For every $f\in \mathcal{C}(K)$, we have that
\[
\widehat{f}(x) \leq f^{\times \times}(x)=\underset{\phi \in \Phi,\, \phi \leq f}{\sup}\phi(x)=\inf
_{\mu \in \mathcal{M}_{x}(\Phi)}\int_{K}fd\mu\leq f(x),\quad
\text{for all }x\in K.
\]
Consequently, $f$ is Choquet convex if and only if  $\widehat{f}=f$ and in this case, we have that 
\[
\widehat{f}(x) = f^{\times \times}(x)=\underset{\phi \in \Phi,\, \phi \leq f}{\sup}\phi(x)=\inf
_{\mu \in \mathcal{M}_{x}(\Phi)}\int_{K}fd\mu= f(x),\quad
\text{for all }x\in K.
\]
\end{proposition}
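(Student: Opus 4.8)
My plan is to derive everything from the identity \eqref{eq: fxx-sup} together with the inclusion $\mathcal{M}^{1}(K)\subseteq\mathcal{O}_{f}$ recorded in \eqref{eq:Of}. The three middle equalities in the first display are exactly \eqref{eq: fxx-sup}, and the inequality $f^{\times\times}(x)\leq f(x)$ was already observed just before that formula (it is also immediate from $\phi(x)-f^{\times}(\phi)\leq f(x)$); so the only new point in the first display is the inequality $\widehat{f}(x)\leq f^{\times\times}(x)$. By Definition~\ref{def:Mx} together with \eqref{eq: equiv-delta} we have $\mathcal{M}_{x}(\Phi)=[\delta_{x}]\cap\mathcal{M}^{1}(K)$, and since $\mathcal{M}^{1}(K)\subseteq\mathcal{O}_{f}$ this gives $\mathcal{M}_{x}(\Phi)\subseteq[\delta_{x}]\cap\mathcal{O}_{f}$. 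Hence the infimum defining $\widehat{f}(x)$ in \eqref{eq:f-hat} is taken over a superset of $\mathcal{M}_{x}(\Phi)$, so
\[
\widehat{f}(x)=\inf_{\mu\in[\delta_{x}]\cap\mathcal{O}_{f}}\int_{K}f\,d\mu\ \leq\ \inf_{\mu\in\mathcal{M}_{x}(\Phi)}\int_{K}f\,d\mu\ =\ f^{\times\times}(x).
\]

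For the ``consequently'' part I would use that, by Definition~\ref{def-choquet-conv} and Lemma~\ref{key-lemma} (as recorded right after the latter), $f$ is Choquet convex if and only if $f=f^{\times\times}$ pointwise. If $\widehat{f}=f$, then the chain $\widehat{f}\leq f^{\times\times}\leq f$ collapses, forcing $f^{\times\times}=f$; thus $f$ is Choquet convex, and the second display then follows at once. Conversely, assume $f$ is Choquet convex, so that $f^{\times\times}=f$ and, by the first display, $\widehat{f}\leq f$; it remains to prove $\widehat{f}\geq f$. Fix $x\in K$ and $\varepsilon>0$ and choose $\phi\in\Phi$ with $\phi\leq f$ and $\phi(x)>f(x)-\varepsilon$ (possible since $f(x)=f^{\times\times}(x)=\sup\{\phi(x):\phi\in\Phi,\ \phi\leq f\}$ by \eqref{eq: fxx-sup}). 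For any $\mu\in[\delta_{x}]\cap\mathcal{O}_{f}$ one has $\int_{K}\phi\,d\mu=\phi(x)$, because $\phi\in\Phi$ and $\mu\in[\delta_{x}]$; hence, once one knows $\int_{K}f\,d\mu\geq\int_{K}\phi\,d\mu$, it follows that $\widehat{f}(x)\geq\phi(x)>f(x)-\varepsilon$, and letting $\varepsilon\to0$ yields $\widehat{f}(x)\geq f(x)$, so that $\widehat{f}=f$.

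The step I expect to be the main obstacle is precisely this last inequality $\int_{K}f\,d\mu\geq\int_{K}\phi\,d\mu$: it is immediate from $f\geq\phi$ when $\mu$ is a \emph{positive} measure, but the infimum in \eqref{eq:f-hat} ranges a priori over all of $[\delta_{x}]\cap\mathcal{O}_{f}$, which is considerably larger than $\mathcal{M}_{x}(\Phi)$. One therefore has to show that, for the purpose of this infimum, it suffices to consider measures in $\mathcal{M}_{x}(\Phi)$ --- equivalently, that the lower bound on $\widehat{f}$ is already enforced by the specific convex $w^{\ast}$-open set $\mathcal{O}_{f}$ (and by the properties of $G_{f}$ obtained in the proof of Theorem~\ref{thm_f-hat}), not merely by an arbitrary convex open neighbourhood of $\mathcal{M}^{1}(K)$. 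Once that reduction is in place the argument above closes, and combining it with the first display gives the full chain of equalities in the Choquet-convex case.
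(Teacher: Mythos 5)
Your first display and the implication ``$\widehat{f}=f\Rightarrow f$ Choquet convex'' are correct and coincide with the paper's argument: the paper likewise observes $\mathcal{M}_{x}(\Phi)=[\delta_{x}]\cap\mathcal{M}^{1}(K)\subset[\delta_{x}]\cap\mathcal{O}_{f}$ and invokes \eqref{eq: fxx-sup}. The problem is the converse implication, which you do not actually prove: you reduce it to showing $\int_{K}f\,d\mu\geq\int_{K}\phi\,d\mu$ for every $\mu\in[\delta_{x}]\cap\mathcal{O}_{f}$ and every $\phi\in\Phi$ with $\phi\leq f$, correctly note that this is clear only for \emph{positive} $\mu$, and then leave the required reduction to positive (or $\Phi$-representing) measures as an unproved assumption. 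Since $[\delta_{x}]\cap\mathcal{O}_{f}$ genuinely contains signed measures (membership in $\mathcal{W}_{\mu,f}$ only constrains the pairing with the single function $f$), this is a real gap rather than a routine verification, so as submitted the proof of the second half of the proposition is incomplete.

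For comparison, the paper closes this step by writing $f=\sup_{\phi\in\Phi,\,\phi\leq f}\phi$ pointwise (valid since $f$ is Choquet convex) and estimating $\widehat{f}(x)=\inf_{\mu}\int_{K}(\sup_{\phi}\phi)\,d\mu\geq\inf_{\mu}\sup_{\phi}\int_{K}\phi\,d\mu\geq\sup_{\phi}\inf_{\mu}\int_{K}\phi\,d\mu=\sup_{\phi}\phi(x)=f^{\times\times}(x)=f(x)$, with $\mu$ ranging over $[\delta_{x}]\cap\mathcal{O}_{f}$. The first inequality in that chain is exactly the monotonicity step you flagged, applied to all such $\mu$, and it is justified only when $\mu\geq 0$; so the paper does not in fact supply the reduction you ask for. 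Your worry is substantive: take $K=[0,1]$, $\Phi=\mathrm{Aff}([0,1])$ and $f(t)=t^{2}$ (which is Choquet convex); the signed measure $\mu=2\delta_{1/2}-\delta_{1}$ satisfies $\langle\mu,a+bt\rangle=a$ for all $a,b$, hence $\mu\in[\delta_{0}]$, and $|\langle\mu-\delta_{0},f\rangle|=1/2<1$, so $\mu\in[\delta_{0}]\cap\mathcal{W}_{\delta_{0},f}\subset[\delta_{0}]\cap\mathcal{O}_{f}$, yet $\int_{K}f\,d\mu=-1/2<0=f(0)$. Thus the reduction cannot hold for $[\delta_{x}]\cap\mathcal{O}_{f}$ as literally defined in \eqref{eq:f-hat}; it does hold (and your argument then closes immediately) if the infimum there is restricted to positive measures. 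In short, your proposal correctly isolates the crux of the proof but does not supply the missing step, and that step cannot be supplied without revisiting the definition of $\widehat{f}$.
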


\begin{proof} Since $\mathcal{M}^{1}(K)\subset \mathcal{O}_{f}$ we deduce
that $\mathcal{M}_{x}(\Phi)= [\delta_x]\cap \mathcal{M}^{1}(K) \subset \lbrack \delta_{x}]\cap \mathcal{O}_{f}$ for
all $x\in K.$ Therefore, it follows readily that:
\[
\widehat{f}(x):=\, \underset{\mu \in \lbrack \delta_{x}]\cap \mathcal{O}_{f}}{\inf
}\, \int_{K}fd\mu \, \leq \inf_{\mu \in \mathcal{M}_{x}(\Phi)}\int_{K}fd\mu
\leq \, \langle \delta_{x},f\rangle=f(x),\quad \text{for all }x\in K.
\]
The assertion follows from~\eqref{eq: fxx-sup}. \smallskip \\
For the second part of the proposition: it follows readily from the above inequalities and Definition~\ref{def-choquet-conv} that if $f=\widehat{f}$, then $f$ is Choquet convex. For the converse, suppose that  $f$ is Choquet convex. Then by Lemma~\ref{key-lemma} (Key Lemma) and formula~\eqref{eq: fxx-sup}
we deduce that for every $x\in K$ 
\begin{equation*}
f(x)\leq \inf_{\mu \in \mathcal{M}_{x}(\Phi )}\int_{K}fd\mu =\underset{\phi
	\in \Phi ,\,\phi \leq f}{\sup }\phi (x)=f^{\times \times }(x)\leq f(x),
\end{equation*}
yielding 
\begin{equation*}
f(x)=\underset{\phi \in \Phi ,\,\phi \leq f}{\sup }\phi (x)=f^{\times \times
}(x).
\end{equation*}
Therefore we have:
\begin{eqnarray*}
\widehat{f}(x)&:=& \underset{\mu \in \lbrack \delta_{x}]\cap \mathcal{O}_f}{\inf}\, \int_{K} f  d\mu = \underset{\mu \in \lbrack \delta_{x}]\cap \mathcal{O}_f}{\inf}\, \int_{K} \left(\underset{\phi \in \Phi,\, \phi \leq f}{\sup}\phi \right)  d\mu \medskip \\ 
& \geq& \underset{\mu \in \lbrack \delta_{x}]\cap \mathcal{O}_f}{\inf} \underset{\phi \in \Phi,\, \phi \leq f}{\sup}\int_{K}\, \phi d\mu 
\geq  \, \underset{\phi \in \Phi, \phi \leq
f}{\sup}\, \underset{\mu \in \lbrack \delta_{x}]\cap \mathcal{O}_f}{\inf}\, \int_{K} \phi
d\mu \, \medskip \\
&=& \, \underset{\phi \in \Phi,\, \phi \leq f}{\sup}\phi(x) =  f^{\times \times}(x).
\end{eqnarray*}
Thus, $\widehat{f} \geq f^{\times \times}$ and so the equalities hold. \end{proof}

\bigskip

The following corollary resumes the above results and provides a characterization of
Choquet convex functions. In particular the class of Choquet convex functions
coincides with the class of (continuous) convex-trace functions.

\begin{corollary}[$\Gamma_{\Phi}(K) = T\mathcal{C}(K)$]
\label{equiv} Let $f\in \mathcal{C}(K)$. \\
The following are equivalent:\smallskip

\phantom{i}$\mathrm{(i)}.$ $f$ is Choquet convex $($ie. $f\in \Gamma_{\Phi}(K))$.\medskip

\phantom{i}$\mathrm{(ii)}.$ $f(x)=\underset{\phi \in \Phi,\, \phi \leq f}{\sup}\phi(x)=f^{\times \times
}(x)$, for all $x\in K$.\smallskip

$\mathrm{(iii)}.$  $f(x)=\widehat{f}(x)$, for all $x\in K$.\medskip

$\mathrm{(iv)}.$  $f$ is continuous convex-trace $($i.e $f\in T\mathcal{C}(K))$.
\end{corollary}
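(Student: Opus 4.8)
The plan is to establish the chain of implications $\mathrm{(i)}\Rightarrow\mathrm{(ii)}\Rightarrow\mathrm{(iii)}\Rightarrow\mathrm{(iv)}\Rightarrow\mathrm{(i)}$, invoking the preparatory results already in hand. The implication $\mathrm{(i)}\Leftrightarrow\mathrm{(ii)}$ is essentially immediate: it is exactly the reformulation of Definition~\ref{def-choquet-conv} obtained from Lemma~\ref{key-lemma} (Key Lemma) together with formula~\eqref{eq: fxx-sup}, which was already recorded in the discussion following the Key Lemma. Likewise, $\mathrm{(i)}\Leftrightarrow\mathrm{(iii)}$ is precisely the content of the second part of Proposition~\ref{times-hat}, which states that $f$ is Choquet convex if and only if $\widehat{f}=f$. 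So the only genuinely new work is to connect these conditions to $\mathrm{(iv)}$, the statement that $f$ is continuous convex-trace.

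For $\mathrm{(iii)}\Rightarrow\mathrm{(iv)}$: assume $f=\widehat{f}$. By Theorem~\ref{thm_f-hat} there is a convex $w^{\ast}$-continuous function $F_{f}:(K(\Phi),w^{\ast})\rightarrow\mathbb{R}$ with $\widehat{f}=F_{f}\circ\delta^{\Phi}$; hence $f=F_{f}\circ\delta^{\Phi}$ with $F_{f}\in\Gamma(K(\Phi))$, which is exactly the defining condition for $f\in T\mathcal{C}(K,\Phi)$ in Definition~\ref{def_convex-trace}~(i). For $\mathrm{(iv)}\Rightarrow\mathrm{(i)}$: suppose $f=F\circ\delta^{\Phi}$ for some $F\in\Gamma(K(\Phi))$. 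Fix $x\in K$ and $\mu\in\mathcal{M}_{x}(\Phi)$. The barycenter (resultant) of $\mu$ computed in $\Phi^{\ast}$ is the functional $r(\mu):=i^{\ast}(\mu)=\mu|_{\Phi}\in K(\Phi)$, and the $\Phi$-representing condition says precisely that $\langle r(\mu),\phi\rangle=\phi(x)=\langle\delta_{x}^{\Phi},\phi\rangle$ for all $\phi\in\Phi$, i.e. $r(\mu)=\delta_{x}^{\Phi}$. Since $F$ is convex and $w^{\ast}$-continuous (hence usc and bounded) on the $w^{\ast}$-compact convex set $K(\Phi)$, Jensen's inequality for the barycenter of the pushforward measure gives $F(\delta_{x}^{\Phi})=F(r(\mu))\leq\int F\,d(\delta^{\Phi}_{\#}\mu)=\int_{K}(F\circ\delta^{\Phi})\,d\mu=\int_{K}f\,d\mu$. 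Thus $f(x)=F(\delta_{x}^{\Phi})\leq\int_{K}f\,d\mu$ for every $\mu\in\mathcal{M}_{x}(\Phi)$, which is Definition~\ref{def-choquet-conv}; so $f\in\Gamma_{\Phi}(K)$.

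The main obstacle is the implication $\mathrm{(iv)}\Rightarrow\mathrm{(i)}$, and specifically the Jensen inequality step: one must be careful that $\delta^{\Phi}_{\#}\mu$ (the image of $\mu$ under $\delta^{\Phi}:K\to K(\Phi)$) is a well-defined Borel probability measure on the $w^{\ast}$-compact convex set $K(\Phi)$ whose barycenter is indeed $r(\mu)=\mu|_{\Phi}$. This is standard Choquet-theory bookkeeping --- the barycenter of $\delta^{\Phi}_{\#}\mu$ is characterized by testing against the $w^{\ast}$-continuous affine functionals on $K(\Phi)$, which are (up to constants) exactly the elements of $\Phi$ viewed as functionals on $\Phi^{\ast}$, and $\int_{K(\Phi)}\phi\,d(\delta^{\Phi}_{\#}\mu)=\int_{K}\phi\circ\delta^{\Phi}\,d\mu=\int_{K}\phi\,d\mu=\phi(x)$ --- but it should be stated explicitly. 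Alternatively, one can avoid measure pushforward entirely by using the Key Lemma directly: since $F\in\Gamma(K(\Phi))$ is in particular $w^{\ast}$-continuous, $f=F\circ\delta^{\Phi}$ is continuous, and writing $f$ as a sup of continuous affine-minus-constant minorants coming from the supporting hyperplanes of $F$ (which restrict to elements of $\Phi$ on $\delta^{\Phi}(K)$) shows $f(x)=\sup_{\phi\in\Phi,\,\phi\leq f}\phi(x)=f^{\times\times}(x)$, i.e. condition $\mathrm{(ii)}$. Either route closes the cycle; I would present the barycenter/Jensen argument as the cleaner one, with the explicit verification of the barycenter identity as the one point requiring care.
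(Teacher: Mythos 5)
Your proof is correct, and for the equivalences $\mathrm{(i)}\Leftrightarrow\mathrm{(ii)}\Leftrightarrow\mathrm{(iii)}$ and the implication $\mathrm{(iii)}\Rightarrow\mathrm{(iv)}$ it coincides with the paper's argument (both are read off from Proposition~\ref{times-hat} and Theorem~\ref{thm_f-hat}). The one place where you genuinely diverge is in closing the cycle. The paper proves $\mathrm{(iv)}\Rightarrow\mathrm{(iii)}$ by extending $F$ to a $w^{\ast}$-lsc convex function $\tilde{F}$ on all of $\Phi^{\ast}$ (value $+\infty$ off $K(\Phi)$), invoking Fenchel--Moreau to get $\tilde{F}=\tilde{F}^{\ast\ast}$, and sandwiching $f^{\times\times}$ between $\tilde{F}^{\ast\ast}\circ\delta^{\Phi}$ and $f$ to conclude $f=f^{\times\times}=\widehat{f}$, i.e.\ condition $\mathrm{(ii)}$. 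You instead prove $\mathrm{(iv)}\Rightarrow\mathrm{(i)}$ directly by the barycentric calculus: the $\Phi$-representing condition for $\mu\in\mathcal{M}_{x}(\Phi)$ says exactly that $i^{\ast}(\mu)=\delta_{x}^{\Phi}$ is the barycenter of the image measure $\delta^{\Phi}_{\#}\mu$ on $K(\Phi)$, and Jensen's inequality for the continuous convex $F$ yields $f(x)=F(\delta_{x}^{\Phi})\leq\int_{K}f\,d\mu$, which is Definition~\ref{def-choquet-conv} verbatim. Both arguments are sound and of comparable length; yours is the more classically Choquet-theoretic one and lands directly on the definition of Choquet convexity, while the paper's duality route has the side benefit of identifying $f$ with $f^{\times\times}$ explicitly (and the alternative you sketch via supporting affine minorants is essentially the paper's argument in disguise). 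Two small remarks: you rightly flag the barycenter identity as the point needing explicit verification, and testing against the functionals $\widehat{\phi}$, $\phi\in\Phi$ --- which exhaust the $w^{\ast}$-continuous linear functionals on $\Phi^{\ast}$ --- indeed suffices by Hahn--Banach separation; on the other hand the parenthetical ``hence usc'' points in the wrong direction for Jensen, since the inequality $F(b)\leq\int F\,d\nu$ rests on $F$ being the supremum of its continuous affine minorants, i.e.\ on \emph{lower} semicontinuity, which the continuity of $F$ of course provides.
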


\begin{proof} The equivalence of $(i),(ii)$ and $(iii)$ follows from
Proposition~\ref{times-hat}. Implication $(iii)\Rightarrow(iv)$ follows
from Theorem~\ref{thm_f-hat}. It remains to prove that $(iv)\Rightarrow
(iii)$. Let $f\in T\mathcal{C}(K).$ Then there exists $F\in \Gamma
(K(\Phi))$ such that $f=F\circ \delta^{\Phi}$. By assigning the value $+\infty$
outside $K(\Phi)$ we extend $F$ to a $w^{\ast}$-lsc convex function $\tilde
{F}\in \Gamma^{<}\,(\Phi^{\ast}).$ We set
\[
\left \{
\begin{array}
[c]{l}
\tilde{F}^{\ast}:\Phi \longrightarrow \mathbb{R}\cup \{+\infty \} \smallskip \\
\tilde{F}^{\ast}(\phi)=\, \underset{Q\in \Phi^{\ast}}{\sup}\, \left \{  \langle
Q,\phi \rangle-\tilde{F}(Q)\right \}  \,
\end{array}
\right.  \quad \text{and\quad}\left \{
\begin{array}
[c]{l}
\tilde{F}^{\ast \ast}:\Phi^{\ast}\longrightarrow \mathbb{R}\cup \{+\infty
\} \smallskip \\
\tilde{F}^{\ast \ast}(Q)=\, \underset{\phi \in \Phi}{\sup}\, \left \{  \langle
Q,\phi \rangle-\tilde{F}^{\ast}(\phi)\right \}  .\,
\end{array}
\right.
\]
Then the classical Fenchel-duality yields that $\tilde{F}=\tilde{F}^{\ast \ast
}$. On the other hand, since $f=F\circ \delta^{\Phi},$ we deduce readily from
(\ref{eq: fx}) that
\[
f^{\times}(\phi):=\sup_{x\in K}\{ \phi(x)-f(x)\}=\, \underset{x\in K}{\sup
}\, \left \{  \langle \delta_{x}^{\Phi},\phi \rangle-\tilde{F}(\delta_{x}^{\Phi
})\right \}  \leq \, \underset{Q\in \Phi^{\ast}}{\sup}\, \left \{  \langle
Q,\phi \rangle-\tilde{F}(Q)\right \}  =\tilde{F}^{\ast}(\phi),
\]
for all $\phi \in \Phi,$ and consequently
\[
f(x)\geq f^{\times \times}(x):=(f^{\times})^{\ast}(\delta_{x}^{\Phi})\geq
\tilde{F}^{\ast \ast}(\delta_{x}^{\Phi})=\tilde{F}(\delta_{x}^{\Phi
})=f(x),\quad \text{for all }x\in K,
\]
yielding that equality holds. Therefore, $f=f^{\times \times}=\widehat{f}$. \end{proof}
 \bigskip

\begin{remark} \label{rem_ch}
$\mathrm{(i)}.$ Since the function $\widehat{f}$
is convex-trace for every $f\in \mathcal{C}(K)$ (cf. Theorem~\ref{thm_f-hat}), Corollary~\ref{equiv}
yields that $\widehat{\widehat{f}}=\widehat{f}$. Consequently, $\widehat{f}$
is Choquet convex for every $f\in \mathcal{C}(K)$\medskip.

\noindent $\mathrm{(ii)}.$ A careful reader might observe that the definition of $\widehat{f}$ depends on the way the neighborhood~$\mathcal{O}_f$ is defined. Indeed, taking any $\alpha >0$ and defining
\begin{equation*}
\mathcal{W}_{\mu ,f,\alpha }:=\left \{\nu \in \mathcal{C}(K)^{\ast }:\,|\langle
\nu -\mu ,f\rangle |\,<\alpha \right \}\quad \text{and }\quad \mathcal{O}_{f,\alpha
}:=\mathrm{co}\,\left( \underset{\mu \in \mathcal{M}^{1}(K)} \bigcup 
\mathcal{W}_{\mu ,f,\alpha} \right) 
\end{equation*}
we obtain 
\begin{equation*}
\widehat{f}^{\alpha }(x)=\,\underset{\mu \in \lbrack \delta _{x}]\cap 
	\mathcal{O}_{f,\alpha }}{\inf }\,\int_{K}\,f\,d\mu\,,\quad \text{for }\,
x\in K.
\end{equation*}
Then for $0<\varepsilon <1<M$ we readily obtain
\begin{equation*}
\widehat{f}^{M}(x)\leq \widehat{f}(x)\leq \widehat{f}^{\varepsilon }(x)\leq
f^{\times \times }(x)\leq f(x),\quad \text{for }x\in K,
\end{equation*}
with equality if and only if $f\in \Gamma_{\Phi}(K)$. Consequently, the
trace-convexification of a continuous function $f$ is generally not
unique, and not equal to its Choquet-convexification $f^{\times \times}$ (this latter is always the biggest possible convexification). Notice that if $f$ is already Choquet-convex, then all of the above convexifications coincide with~$f$.
\end{remark}

\bigskip 

In the following result, we prove that a supremum of Choquet convex functions
is a lsc convex-trace function.

\begin{proposition}\label{Prop_lsc-CT} Let $\{f_{i}\}_{i\in I}\subset \Gamma_{\Phi }(K)$ be a nonempty family of uniformly bounded Choquet-convex functions on a compact space $K$. Then the (bounded)
function $f:=\sup_{i\in I}f_{i}$ is lower semicontinuous convex-trace on $K$.
\end{proposition}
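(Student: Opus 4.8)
The plan is to realize $f=\sup_{i\in I}f_i$ as the trace on $K$ of a $w^*$-lower semicontinuous convex function on $K(\Phi)$, and then invoke the characterization $\Gamma_\Phi(K)=T\mathcal{C}(K)$ from Corollary~\ref{equiv} to replace each $f_i$ by a genuine convex $w^*$-continuous function on $K(\Phi)$. Concretely, by Corollary~\ref{equiv}~(i)$\Leftrightarrow$(iv), for each $i\in I$ there is $F_i\in\Gamma(K(\Phi))$ with $f_i=F_i\circ\delta^\Phi$. Since the family $\{f_i\}$ is uniformly bounded, say $|f_i|\le M$ for all $i$, one would like the same bound to hold for the $F_i$ on all of $K(\Phi)$; this is where one has to be slightly careful, and it is the step I expect to be the main obstacle.

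To handle the uniform bound on $K(\Phi)$, recall from the proof of Corollary~\ref{equiv}~(iv)$\Rightarrow$(iii) that $f_i=f_i^{\times\times}=(f_i^\times)^*\circ\delta^\Phi$, and that $(f_i^\times)^*$ is itself a $w^*$-lsc convex function on $\Phi^*$ whose restriction to $K(\Phi)$ equals $F_i$ (up to the extension-by-$+\infty$ identification). From $f_i\ge -M$ and the definition $f_i^\times(\phi)=\sup_{x\in K}\{\phi(x)-f_i(x)\}$ one gets $f_i^\times(\phi)\le \|\phi\|_\infty+M$, hence for $Q\in K(\Phi)$, $(f_i^\times)^*(Q)=\sup_\phi\{\langle Q,\phi\rangle - f_i^\times(\phi)\}$; testing only with the constant function $\phi\equiv c$ and using $\langle Q,\mathbf 1\rangle=1$ gives $(f_i^\times)^*(Q)\ge c - (|c|+M)$, which for $c$ of either sign yields $(f_i^\times)^*(Q)\ge -M$. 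For the upper bound, since $f_i\le M$ we have $\phi(x)-f_i(x)\ge \phi(x)-M$, so $f_i^\times(\phi)\ge \langle Q,\phi\rangle - M$ for every $Q\in K(\Phi)$ (as $Q$ is a limit of convex combinations of Dirac evaluations, or directly since $Q\in\overline{\mathrm{conv}}^{w^*}\{\delta_x^\Phi\}$ and $f_i^\times(\phi)\ge\phi(x)-M$ for all $x$), whence $\langle Q,\phi\rangle - f_i^\times(\phi)\le M$ and therefore $(f_i^\times)^*(Q)\le M$. Thus setting $F_i:=(f_i^\times)^*|_{K(\Phi)}$ we get $w^*$-lsc convex functions on $K(\Phi)$ with $-M\le F_i\le M$ and $f_i=F_i\circ\delta^\Phi$.

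With this in hand, define $F:=\sup_{i\in I}F_i$ on $K(\Phi)$. Then $F$ is convex (a pointwise supremum of convex functions), finite-valued and bounded, $-M\le F\le M$, and $w^*$-lower semicontinuous (a pointwise supremum of $w^*$-lsc functions is $w^*$-lsc). Hence $F\in\Gamma^{<}(K(\Phi))$. Finally, for every $x\in K$,
\[
(F\circ\delta^\Phi)(x)=\sup_{i\in I}F_i(\delta^\Phi(x))=\sup_{i\in I}f_i(x)=f(x),
\]
so $f=F\circ\delta^\Phi$ with $F\in\Gamma^{<}(K(\Phi))$, which is exactly the definition of $f\in T\mathcal{C}^{<}(K,\Phi)$; the boundedness of $f$ is clear from $-M\le f\le M$. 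This completes the argument. The only genuinely delicate point is the passage from the uniform bound on the functions $f_i$ defined on $K$ to a uniform bound on the convex representatives $F_i$ defined on the larger set $K(\Phi)$; using the canonical representative $(f_i^\times)^*$ (rather than an arbitrary convex extension furnished by Corollary~\ref{equiv}) is what makes this bound transparent and uniform in $i$.
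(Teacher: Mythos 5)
Your proof is correct, and its overall architecture is the same as the paper's: represent each $f_i$ as $F_i\circ\delta^\Phi$ with $F_i$ convex and uniformly bounded on $K(\Phi)$, take $F=\sup_i F_i$, observe that a supremum of convex $w^*$-lsc functions is convex $w^*$-lsc and here remains real-valued, and read off $f=F\circ\delta^\Phi$. The one place you diverge is the choice of representative: the paper simply invokes Theorem~\ref{thm_f-hat}, whose part (ii) already supplies a convex $w^*$-\emph{continuous} $F_i$ with the uniform bound $-(\Vert f_i\Vert_\infty+1)\le F_i\le\Vert f_i\Vert_\infty$, so the ``delicate point'' you flag is handled there in one citation. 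You instead take the canonical biconjugate $F_i=(f_i^\times)^*|_{K(\Phi)}$, which is only $w^*$-lsc but for which you derive the slightly sharper bound $-M\le F_i\le M$ by hand (your two estimates — testing with constants for the lower bound, and using $\langle Q,\phi\rangle\le\sup_x\phi(x)$ for $Q\in\overline{\mathrm{conv}}^{w^*}\{\delta_x^\Phi\}$ for the upper bound — are both valid, and $f_i=(f_i^\times)^*\circ\delta^\Phi$ is exactly Corollary~\ref{equiv}(ii)). Since the final $F$ only needs to be lsc, losing continuity of the individual $F_i$ costs nothing; your route is marginally more self-contained on the bound, while the paper's is shorter because the bound is prepackaged in Theorem~\ref{thm_f-hat}(ii).
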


\begin{proof} Let us assume that $\Vert f_i\Vert_{\infty}\leq M$ for all $i\in I$. Then by Corollary~\ref{equiv} and Theorem~\ref{thm_f-hat}(ii), for each $i\in I$,
there exists a convex $w^{\ast }$-continuous function $F_{i}:K(\Phi)\rightarrow \mathbb{R}$ such that $f_{i}=F_{i}\circ \delta ^{\Phi }$ and 
$-(M+1)\leq F_{i}(Q)\leq M$, for all $Q\in K(\Phi)$. Set $F:=\sup_{i\in I}F_{i}$. Then $F$ is $w^{\ast}$-lsc convex function. On the other hand,
for every $x\in K$, we have 
\begin{equation*}
F(\delta _{x}^{\Phi })\,=\,\sup_{i\in i}F_{i}(\delta _{x}^{\Phi })\,=\,\sup_{i\in I}f_{i}(x)=f(x),
\end{equation*}
which, in view of Definition~\ref{def_convex-trace}(ii), yields that $f\in T\mathcal{C}^{<}(K)$. Finally, notice that both functions $F$ and $f$ take their values on $[-(M+1),M]$, therefore they are real-valued. 
\end{proof}

\subsection{Convex-trace sets.}\label{ssec:3.5}

In this subsection we define the notion of trace-convexity for subsets of a
compact topological space. This notion inevitably relates tightly to the
definition of trace-convexity of functions and, as we shall see, it coincides with the notion of $\Phi$-convexity given by Ky Fan (see \cite{Ky}, \cite{Kh} or \cite{Ba}). In fact, the theory can be naturally developed in a more general framework, that of a completely regular (Hausdorff) topological space. We recall that such spaces admit the so-called Stone-\v{C}ech compactification. (We refer to~\cite{Kelley} for definition and properties of these spaces.)\smallskip

More precisely, throughout this section we assume that $X$ is a completely
regular topological space, which is dense to some compact set $K$. If the
compact set $K$ is not explicitely given, then we can always consider $K=\beta
X$ (the Stone-\v{C}ech compactification of $X$).\smallskip

Let further $\Phi$ be a closed subspace of $\mathcal{C}(K)$ containing the
constant functions and separating points in $X$. If $K=\beta X$, then
$\mathcal{C}(K)\equiv \mathcal{C}_{b}(X)$ (Banach space of all continuous
bounded real-valued functions on $X$). We also recall from~\eqref{eq: K(Phi)}
the definition of the set $K(\Phi)\subset \Phi^{\ast}$ and
from~\eqref{eq: detla_Phi} the definition of the canonical injection
$\delta^{\Phi}:K\rightarrow K(\Phi)$.\smallskip

\begin{remark} [Other cases]\label{Rem_open-dense} The forthcoming definition of $\Phi$-trace-convexity as well as all results of this section remain true if $X$ is in particular compact. In this case we have $X=\beta X=K$ and we can simply replace $X$ by $K$ in all statements. Another interesting special case is when the set $X$ is open (and dense) into some given compact set~$K$, see Subsection~\ref{ssec:5.2} (cf. $X=\mathbb{D}$ is the open complex disk).
\end{remark}

We are ready to give the following definition.

\begin{definition} [Trace convex sets]\label{def_TC-set} A closed set $C\subset X$ is called convex-trace with respect to $\Phi$, if there exists a closed convex subset $\widehat{C}$ of $(K(\Phi),w^{\ast})$ such that
 \begin{equation}
	\delta^{\Phi}(C)=\delta^{\Phi}(X)\cap \widehat{C}. \label{eq:TC-set}
 \end{equation}
The set of all convex-trace subsets of $X$ will be denoted by $\mathcal{P}_{TC}(X)$.
\end{definition}

In other words,
\begin{equation}
C\in \mathcal{P}_{TC}(X)\quad \Longleftrightarrow \quad C=X\cap \left(
\delta^{\Phi}\right)  ^{-1}(\widehat{C}),\; \text{for some $w^{\ast}$-closed
	convex set } \widehat{C}\subset K(\Phi) \label{eq:CTS}
\end{equation}

\begin{remark}
Notice that the set $\widehat{C}$ in~\eqref{eq:TC-set} and~\eqref{eq:CTS} can
be taken to be the $w^{\ast}$-closed convex hull of $\delta^{\Phi}(C)$ in
$K(\Phi)$. Notice further that $X$ satisfies trivially~\eqref{eq:TC-set}, therefore it is convex-trace.
\end{remark}

\subsubsection{Trace convexification of a set}

Based on the equivalence given in~\eqref{eq:CTS} we obtain an
alternative characterization of trace-convexity. In what follows, we denote by
$[g\leq r]:=\{x\in X:\,g(x)\leq r\}$ the sublevel set of the function
$g\in \mathbb{R}^{X}$ at $r>0$.

\begin{proposition} [Characterization of convex-trace sets]\label{prop_TC-set} Let $C$ be a closed subset of $X$. \newline The following assertions are equivalent:
	\begin{itemize}
		\item[$\mathrm{(i).}$] $C\in \mathcal{P}_{TC}(X)$ ;
		
		\item[$\mathrm{(ii).}$] There exists a family $\{ \phi_{i}\}_{i\in I}%
		\subset \Phi$ which is uniformly bounded on $K$ and a bounded sequence $\{
		\lambda_{i}\}_{i\in I}\subset \mathbb{R}$ such that
	\begin{equation}
		C=\bigcap_{i\in I}\{x\in X:\, \phi_{i}(x)\leq \lambda_{i}\}. \label{eq:(ii)}
	\end{equation}
		\item[$\mathrm{(iii).}$] There exists $f\in TC^{<}(K)$ (lsc, convex-trace) such that
	\begin{equation}
		C=\{x\in X:\,f(x)\leq0\}. \label{eq:(iii)}%
	\end{equation}
	\end{itemize}
\end{proposition}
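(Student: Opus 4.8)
The plan is to prove the cycle of implications $(i)\Rightarrow(iii)\Rightarrow(ii)\Rightarrow(i)$, exploiting at each step the correspondence between $w^{*}$-closed convex subsets of $K(\Phi)$, lsc convex functions on $K(\Phi)$, and families of affine (hence, after restriction, $\Phi$-) functionals.

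First I would prove $(i)\Rightarrow(iii)$. Assume $C\in\mathcal{P}_{TC}(X)$, so $\delta^{\Phi}(C)=\delta^{\Phi}(X)\cap\widehat{C}$ for some $w^{*}$-closed convex $\widehat{C}\subset K(\Phi)$; as noted in the remark following Definition~\ref{def_TC-set} we may take $\widehat{C}=\overline{\mathrm{conv}}^{w^{*}}(\delta^{\Phi}(C))$. The natural candidate is the $w^{*}$-distance-type functional, but since $\Phi^{*}$ with the $w^{*}$-topology need not be metrizable it is cleaner to use a gauge built from $\Phi$: since $\widehat{C}$ is $w^{*}$-closed and convex and $K(\Phi)$ is $w^{*}$-compact, one can realize $\widehat{C}$ as $\{Q\in K(\Phi):\sup_{i\in I}(\langle Q,\phi_{i}\rangle-\lambda_{i})\le 0\}$ for a suitable family $\{\phi_{i}\}\subset\Phi$ and scalars $\{\lambda_{i}\}$, using the Hahn–Banach separation theorem to separate $\widehat{C}$ from each point of $K(\Phi)\setminus\widehat{C}$ by a $w^{*}$-continuous affine functional, which on $K(\Phi)\subset\Phi^{*}$ is exactly evaluation against some $\phi\in\Phi$ plus a constant (here one uses that $\Phi$ contains the constants). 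Bounding: because $K(\Phi)$ consists of norm-one functionals, each separating functional can be rescaled so that $\{\phi_{i}\}$ is uniformly bounded on $K$ and $\{\lambda_{i}\}$ is bounded. Then set $F:=\sup_{i\in I}(\phi_{i}-\lambda_{i}\mathbf{1})$ viewed on $K(\Phi)$ via $Q\mapsto\sup_{i}(\langle Q,\phi_{i}\rangle-\lambda_{i})$; this is a $w^{*}$-lsc convex function, real-valued by the uniform bounds, and $f:=F\circ\delta^{\Phi}\in T\mathcal{C}^{<}(K)$ with $[f\le 0]\cap X=C$ by construction. This simultaneously yields $(ii)$ with the same family, so in fact I would organize the argument as $(i)\Rightarrow(ii)$ first (the separation step) and then observe $(ii)\Rightarrow(iii)$ is immediate by taking $f=\sup_{i}(\phi_{i}-\lambda_{i})$ which is lsc convex-trace by Proposition~\ref{Prop_lsc-CT} (each $\phi_{i}-\lambda_{i}\in\Phi\subset\Gamma_{\Phi}(K)$, uniformly bounded).

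For $(iii)\Rightarrow(i)$: given $f\in T\mathcal{C}^{<}(K)$ with $C=[f\le 0]\cap X$, write $f=F\circ\delta^{\Phi}$ with $F\in\Gamma^{<}(K(\Phi))$. Then $\widehat{C}:=\{Q\in K(\Phi):F(Q)\le 0\}$ is convex (as a sublevel set of a convex function) and $w^{*}$-closed (as a sublevel set of a $w^{*}$-lsc function on the $w^{*}$-compact $K(\Phi)$), and $\delta^{\Phi}(C)=\delta^{\Phi}(X)\cap\widehat{C}$ because $Q=\delta^{\Phi}_{x}\in\widehat{C}$ iff $F(\delta^{\Phi}_{x})=f(x)\le 0$. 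Hence $C\in\mathcal{P}_{TC}(X)$. I would also double-check closedness of $C$ in $X$ is consistent throughout: $f$ being lsc makes $[f\le 0]$ closed in $K$, so $C$ is closed in $X$, matching the standing hypothesis of the proposition.

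The main obstacle is the separation step in $(i)\Rightarrow(ii)$: one must produce the separating $\Phi$-functionals with uniform control on the norms of $\{\phi_{i}\}$ and on $\{\lambda_{i}\}$, so that the resulting supremum is a genuine (real-valued) function rather than taking the value $+\infty$. The key observation making this work is that every $Q\in K(\Phi)$ has norm $1$, so after translating and rescaling each Hahn–Banach functional $\psi_{i}\in\Phi$ (normalizing $\|\psi_{i}\|_{\infty}\le 1$, say, and absorbing the constant into $\lambda_{i}$ with $|\lambda_{i}|\le 1$ since values of $\langle Q,\psi_{i}\rangle$ lie in $[-1,1]$) we obtain exactly the uniform boundedness claimed in $(ii)$. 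A secondary, purely bookkeeping point is to make sure the indexing allows an arbitrary family (not just a sequence) — the statement of $(ii)$ as written alternates between "family" and "sequence", but only the family version is needed and used, so I would phrase everything with $\{\phi_{i}\}_{i\in I}$ throughout.
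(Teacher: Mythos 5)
Your proposal is correct and follows essentially the same route as the paper: $(i)\Rightarrow(ii)$ by Hahn--Banach separation in $(\Phi^{\ast},w^{\ast})$ with the functionals normalized (using the norm-boundedness of $K(\Phi)$) so that $\{\phi_{i}\}$ and $\{\lambda_{i}\}$ are uniformly bounded, $(ii)\Rightarrow(iii)$ by taking $g=\sup_{i}(\phi_{i}-\lambda_{i})$ and invoking Proposition~\ref{Prop_lsc-CT}, and $(iii)\Rightarrow(i)$ via the $w^{\ast}$-closed convex sublevel set $\widehat{C}=[F\le 0]$. Your remark that ``sequence'' in $(ii)$ should read ``family'' is a fair observation about the statement, not a gap in the argument.
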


\begin{proof} (i)$\Rightarrow $(ii). Let $C\in \mathcal{P}_{TC}(X)$.
Then there exists a $w^{\ast }$-closed convex subset $\widehat{C}$ of
$K(\Phi )\subset \Phi^{\ast }$ such that \eqref{eq:CTS} holds. By the
Hahn-Banach separation theorem (in the locally convex space 
$(\Phi ^{\ast},w^{\ast })$) we deduce that $\widehat{C}$ is the intersection of closed half-spaces $H_{i}:=\{Q\in \Phi ^{\ast }:\langle Q,\phi _{i}\rangle \leq \lambda _{i}\},$ $i\in I$. Each half-space is defined by a linear functional $\phi _{i}$. Since $K(\Phi )$ is $||\cdot ||_{\Phi ^{\ast }}$-bounded in $\Phi^{\ast }$, we may take $||\phi _{i}||_{\Phi }=||\phi _{i}||_{\infty }=1$ for all $i\in I$ and deduce that $\{ \lambda _{i}\}_{i\in I}\subset \mathbb{(-}M,M)$, for some $M>0$. Then we deduce from~\eqref{eq:CTS} that
	\begin{equation*}
	C=X\cap \left( \delta ^{\Phi }\right) ^{-1}(\widehat{C})=X\bigcap
	\bigcap_{i\in I}\left( \delta ^{\Phi }\right)
	^{-1}(H_{i})=\bigcap_{i\in I}\{x\in X:\, \phi _{i}(x)\leq \lambda
	_{i}\}.
	\end{equation*}
	
(ii)$\Rightarrow $(iii). Let us assume that (\ref{eq:(ii)}) holds for
	some uniformly bounded family $\{ \phi _{i}\}_{i\in I}$. Since $\Phi$ contains the constant functions, we can replace $\phi_i$ by $\phi_i - \lambda_i$ (cf. Remark~\ref{rem-Phi-stable}) and observe that $[\phi\leq \lambda_i]=[\phi-\lambda_i\leq 0]$. Then we set $g=\sup_{i\in I}(\phi_i - \lambda_i)$. It follows readily that $\bigcap_{i\in I}\{x\in X:\, \phi _{i}(x)\leq \lambda_i \}=[g\leq 0]$, while by
	Proposition~\ref{Prop_lsc-CT}, we deduce that $g\in TC^{<}(\beta X)$.	
	\smallskip
	
	(iii)$\Rightarrow $(i). Let us now assume that~\eqref{eq:(iii)} holds and
	let $F\in \Gamma ^{<}(K(\Phi ))$ (lsc convex) such that $f=F\circ \delta
	^{\Phi}$. Then $C=X\cap \left( \delta ^{\Phi }\right) ^{-1}(\widehat{C})$
	where $\widehat{C}=[F\leq 0]$ is obviously closed and convex in $K(\Phi)$,
	and the result follows from~\eqref{eq:CTS}. \end{proof}

\bigskip

Using~\eqref{eq:(ii)} of the above proposition, we can easily deduce the following corollary.

\begin{corollary} [Separation theorem]\label{Cor_separation} Let $C$ be a nonempty subset of $X$. \newline The following assertions are equivalent:
	
	\begin{itemize}
		\item[$\mathrm{(i).}$] $C\in \mathcal{P}_{TC}(X)$ (i.e. $C$ is (closed) convex-trace in
		$X$) ;
		
		\item[$\mathrm{(ii).}$] For every $\bar{x}\in K\diagdown C,$ then there exists $\phi
		\in \Phi$ such that
		\begin{equation} 
		\sup_{x\in C}\, \phi(x)<\phi(\bar{x}). \label{eq:sep}
		\end{equation}
		
	\end{itemize}
\end{corollary}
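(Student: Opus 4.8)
\textbf{Proof proposal for Corollary~\ref{Cor_separation}.}

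The plan is to prove the equivalence by going through the characterization of convex-trace sets given in Proposition~\ref{prop_TC-set}, specifically the equivalence (i)$\Leftrightarrow$(ii) there, and then translating the half-space description into the separation statement. First I would prove (i)$\Rightarrow$(ii). Assume $C\in\mathcal{P}_{TC}(X)$; by Proposition~\ref{prop_TC-set} we may write $C=\bigcap_{i\in I}\{x\in X:\phi_i(x)\le\lambda_i\}$ for some uniformly bounded family $\{\phi_i\}_{i\in I}\subset\Phi$ and bounded scalars $\{\lambda_i\}_{i\in I}$. Fix $\bar x\in K\setminus C$. The point here is that $C$, viewed through $\delta^\Phi$, sits inside the $w^\ast$-closed convex set $\widehat{C}:=\bigcap_{i\in I}\{Q\in\Phi^\ast:\langle Q,\phi_i\rangle\le\lambda_i\}$, and since $\bar x\notin C$ while $\bar x\in K$, the image $\delta_{\bar x}^\Phi$ must fail at least one of the defining inequalities (otherwise $\delta_{\bar x}^\Phi\in\widehat C\cap\delta^\Phi(X)=\delta^\Phi(C)$, forcing $\bar x\in C$ by injectivity of $\delta^\Phi$). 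Hence there is $i_0\in I$ with $\phi_{i_0}(\bar x)=\langle\delta_{\bar x}^\Phi,\phi_{i_0}\rangle>\lambda_{i_0}\ge\sup_{x\in C}\phi_{i_0}(x)$, which is exactly~\eqref{eq:sep} with $\phi=\phi_{i_0}$.

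For the converse (ii)$\Rightarrow$(i), assume that for every $\bar x\in K\setminus C$ there exists $\phi_{\bar x}\in\Phi$ with $\sup_{x\in C}\phi_{\bar x}(x)<\phi_{\bar x}(\bar x)$. Set $\lambda_{\bar x}:=\sup_{x\in C}\phi_{\bar x}(x)$, so that $C\subset[\phi_{\bar x}\le\lambda_{\bar x}]$ while $\bar x\notin[\phi_{\bar x}\le\lambda_{\bar x}]$. I would then claim
\[
C=\bigcap_{\bar x\in K\setminus C}\{x\in X:\phi_{\bar x}(x)\le\lambda_{\bar x}\}.
\]
The inclusion $\subseteq$ is immediate from $C\subset[\phi_{\bar x}\le\lambda_{\bar x}]$; for $\supseteq$, if some $x\in X$ lay outside $C$ it would be an admissible choice of $\bar x$ in the index set and would violate its own inequality $\phi_{x}(x)\le\lambda_{x}$, a contradiction. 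This realizes $C$ in the form~\eqref{eq:(ii)}, provided the family is uniformly bounded: replacing each $\phi_{\bar x}$ by $\phi_{\bar x}/\|\phi_{\bar x}\|_\infty$ (and $\lambda_{\bar x}$ by $\lambda_{\bar x}/\|\phi_{\bar x}\|_\infty$, which stays in a bounded interval since $K$ is compact and each normalized $\phi$ has sup-norm one) we may assume $\|\phi_{\bar x}\|_\infty=1$ for all indices, and the scalars $\lambda_{\bar x}$ are then bounded by $1$ in absolute value. We must also note that $C$, being the intersection on the right, is closed in $X$ (and $\Phi$ contains constants, so the translations used above are legitimate by Remark~\ref{rem-Phi-stable}). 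Then Proposition~\ref{prop_TC-set}, (ii)$\Rightarrow$(i), gives $C\in\mathcal{P}_{TC}(X)$.

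The main obstacle I anticipate is bookkeeping the normalization so that the family $\{\phi_{\bar x}\}$ genuinely ends up uniformly bounded with bounded scalars, and confirming that discarding the indices where $\phi_{\bar x}=0$ (the trivial functional) causes no issue — but if $\phi_{\bar x}=0$ then $0=\phi_{\bar x}(\bar x)>\sup_C\phi_{\bar x}=0$ is impossible, so no such index arises. A second, more cosmetic subtlety is that Corollary~\ref{Cor_separation} allows $C$ to be merely nonempty rather than assumed closed, whereas Proposition~\ref{prop_TC-set} is stated for closed $C$; the resolution is that in direction (ii)$\Rightarrow$(i) the displayed intersection shows $C$ is automatically closed, so the hypotheses of Proposition~\ref{prop_TC-set} are met, and in direction (i)$\Rightarrow$(ii) membership in $\mathcal{P}_{TC}(X)$ already entails closedness by Definition~\ref{def_TC-set}. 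With these points settled the argument is complete.
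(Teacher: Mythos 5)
Your direction (ii)$\Rightarrow$(i) is correct and follows the route the paper intends: index the separating functionals by points of the complement, normalize so the family is uniformly bounded with bounded scalars, observe that $C$ is then exhibited as an intersection of sublevel sets (hence closed in $X$), and invoke Proposition~\ref{prop_TC-set}. The bookkeeping about normalization, the impossibility of $\phi_{\bar x}=0$, and the remark on closedness are all fine. Note, however, that this direction only ever uses the indices $\bar x\in X\setminus C$.

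The gap is in (i)$\Rightarrow$(ii), at the parenthetical ``otherwise $\delta_{\bar x}^{\Phi}\in\widehat C\cap\delta^{\Phi}(X)=\delta^{\Phi}(C)$, forcing $\bar x\in C$''. That step presupposes $\delta_{\bar x}^{\Phi}\in\delta^{\Phi}(X)$, i.e.\ $\bar x\in X$; but the statement quantifies over all $\bar x\in K\setminus C$, and when $X$ is a proper dense subset of $K$ this includes points $\bar x\in K\setminus X$, for which $\delta_{\bar x}^{\Phi}$ may perfectly well lie in $\widehat C$ without $\bar x$ belonging to $C\subset X$. The failure is not cosmetic: for such points the implication is genuinely false in general. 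Take $C=X$ with $X$ a proper dense subset of $K$ (the paper notes $X$ is always convex-trace); for any $\bar x\in K\setminus X$ and any $\phi\in\Phi$, density and continuity give $\phi(\bar x)\le\sup_{x\in X}\phi(x)$, so \eqref{eq:sep} cannot hold. The same phenomenon appears in Subsection~\ref{ssec:5.3}: the tail $C=\{k:k\ge n_{1}\}$ is trace-convex, yet every $\phi\in\mathrm{span}\{\boldsymbol{1},\boldsymbol{b}\}$ satisfies $\phi(p)=\lim_k\phi(k)\le\sup_{C}\phi$ at a free ultrafilter $p$. So your argument actually proves the equivalence with (ii) quantified over $\bar x\in X\setminus C$ --- which is all the converse direction uses, and is presumably what is meant --- but as written it claims the stronger separation at every point of $K\setminus C$ and the justification collapses exactly where the claim fails. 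You should either restrict the quantifier to $X\setminus C$, or assume $X=K$, or add a hypothesis guaranteeing $\delta^{\Phi}(\bar x)\notin\overline{\mathrm{co}}^{w^{\ast}}(\delta^{\Phi}(C))$ for $\bar x\in K\setminus X$ (as happens, e.g., for compact $C\subset\mathbb{D}$ in Lemma~\ref{lem-Ubar}).
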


Given a nonempty subset $S\subset X$ and a closed subspace $\Phi$ of
$\mathcal{C}(K)$ as above, we define the $\Phi$-trace convexification
$\overline{\mathrm{co}}_{\Phi}(S)$ of $S$ as follows:
\[
\overline{\mathrm{co}}_{\Phi}(S)=\bigcap_{S\subset C}\left \{  C:\;C\in
\mathcal{P}_{TC}(X)\right \}
\]
The following result follows easily from the definitions.

\begin{proposition}
	[Characterization of convexification]\label{convexification} Let\ $S\subset X$
	and consider the usual closed convexification of $\delta^{\Phi}(S)$ in
	$K(\Phi)$, that is,
	\[
	\overline{\mathrm{co}}^{w^{\ast}}\,(\delta^{\Phi}(S))=\bigcap_{\delta^{\Phi
		}(S)\subset \widehat{C}}\left \{  \widehat{C}:\; \widehat{C}\subset K(\Phi)\,\,\text{ $w^{\ast}$-closed and convex }\right \}  .
	\]
	Then
	\[
	\overline{\mathrm{co}}_{\Phi}(S)=X\cap \left(  \delta^{\Phi}\right)
	^{-1}(\overline{\mathrm{co}}^{w^{\ast}}\,(\delta^{\Phi}(S)).
	\]
	
\end{proposition}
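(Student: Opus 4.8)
The plan is to prove the two inclusions
\[
\overline{\mathrm{co}}_{\Phi}(S)\;\subset\; X\cap \left(\delta^{\Phi}\right)^{-1}\!\left(\overline{\mathrm{co}}^{w^{\ast}}(\delta^{\Phi}(S))\right)
\quad\text{and}\quad
X\cap \left(\delta^{\Phi}\right)^{-1}\!\left(\overline{\mathrm{co}}^{w^{\ast}}(\delta^{\Phi}(S))\right)\;\subset\;\overline{\mathrm{co}}_{\Phi}(S),
\]
working directly from the defining intersections on each side and the characterization~\eqref{eq:CTS} of convex-trace sets.

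For the first inclusion, I would start with the $w^{\ast}$-closed convex set $\widehat{C}_0:=\overline{\mathrm{co}}^{w^{\ast}}(\delta^{\Phi}(S))\subset K(\Phi)$ and set $C_0:=X\cap(\delta^{\Phi})^{-1}(\widehat{C}_0)$. By~\eqref{eq:CTS}, $C_0\in\mathcal{P}_{TC}(X)$, and since $\delta^{\Phi}(S)\subset\widehat{C}_0$ we have $S\subset C_0$; hence $C_0$ is one of the sets in the intersection defining $\overline{\mathrm{co}}_{\Phi}(S)$, so $\overline{\mathrm{co}}_{\Phi}(S)\subset C_0$, which is exactly the first inclusion.

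For the reverse inclusion, let $C\in\mathcal{P}_{TC}(X)$ with $S\subset C$; I must show $C\supset C_0$. By~\eqref{eq:CTS} there is a $w^{\ast}$-closed convex $\widehat{C}\subset K(\Phi)$ with $C=X\cap(\delta^{\Phi})^{-1}(\widehat{C})$. From $S\subset C$ we get $\delta^{\Phi}(S)\subset\widehat{C}$, and since $\widehat{C}$ is $w^{\ast}$-closed and convex, this forces $\widehat{C}_0=\overline{\mathrm{co}}^{w^{\ast}}(\delta^{\Phi}(S))\subset\widehat{C}$. Taking preimages and intersecting with $X$ gives $C_0=X\cap(\delta^{\Phi})^{-1}(\widehat{C}_0)\subset X\cap(\delta^{\Phi})^{-1}(\widehat{C})=C$. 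Intersecting over all such $C$ yields $C_0\subset\overline{\mathrm{co}}_{\Phi}(S)$, completing the proof.

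There is essentially no serious obstacle here; the only point requiring a word of care is the step $\delta^{\Phi}(S)\subset\widehat{C}\Rightarrow\widehat{C}_0\subset\widehat{C}$, which is just the minimality property of the $w^{\ast}$-closed convex hull taken inside the ambient locally convex space $(\Phi^{\ast},w^{\ast})$ — one should note that $\widehat{C}$, being $w^{\ast}$-closed and convex in $K(\Phi)$, is also $w^{\ast}$-closed and convex in $\Phi^{\ast}$ (as $K(\Phi)$ is itself $w^{\ast}$-closed), so the hull computed in $K(\Phi)$ coincides with the one computed in $\Phi^{\ast}$. Everything else is a routine manipulation of preimages under $\delta^{\Phi}$ and intersections, so the proof is short.
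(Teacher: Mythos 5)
Your proof is correct: the paper omits the argument entirely (stating only that the result ``follows easily from the definitions''), and your two-inclusion argument via~\eqref{eq:CTS} together with the minimality of the $w^{\ast}$-closed convex hull is exactly the intended one. The side remark that the hull computed inside $K(\Phi)$ agrees with the one computed in $(\Phi^{\ast},w^{\ast})$, because $K(\Phi)$ is itself $w^{\ast}$-closed and convex, is a legitimate point of care and is handled correctly.
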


\subsubsection{Abstract Krein-Milman theorem and relation with Ky Fan convexity}

If $X=K$ is compact, property~(ii) of Proposition~\ref{prop_TC-set} corresponds to the definition of $\Phi$-convexity given by Ky Fan~\cite{Ky}. Therefore Proposition~\ref{prop_TC-set} shows that:
\begin{itemize}
\item[--] A set $C\subset K$ is $\Phi$-convex-trace if and only if it is $\Phi$-convex in the sense of Ky Fan. 
\end{itemize} 

Before we proceed, let us recall the classical Krein-Milman theorem in a locally convex space~$E$. For $x,y\in E$, we define the open segment 
$(x,y):=\{tx+(1-t)y:\,t\in (0,1)\}\,\subset\, E$. 
We first recall the definition of an extreme point.

\begin{definition} [Extreme point] \label{def-extreme}
Let $S$ be a nonempty subset of a locally convex space $E.$ We say that
$\bar{p}\in S$ is an \emph{extreme} point of $S$ if whenever $\bar{p}\in
(p_{1},p_{2})$, with $p_{1},p_{2}\in C$, it holds $p_{1}=p_{2}=\bar{p}$. We
denote by $\mathrm{Ext}(S)$ the set of all extreme points of $S$.
\end{definition}

Recall that the Krein-Milman theorem asserts that if $C$ is a convex
compact subset of a locally convex space, then $C$ is the closed convex hull
of its extreme points, that is, $C=\overline{\mathrm{co}}(\mathrm{Ext}(C))$. A
more precise version asserts that for any nonempty subset $A$ of $C$ it holds:
\[
\overline{\mathrm{co}}(A)=\overline{\mathrm{co}}(\mathrm{Ext}(\bar{A})).
\]
The Krein-Milman theorem has a partial converse known as Milman's theorem
(see~\cite{Ph} eg.) which states that if $A$ is a subset of $C$ and the closed
convex hull of $A$ is all of $C$, then every extreme point of $C$ belongs to
the closure of $A$, that is,
\[
(A\subset C;\hspace{2mm}C=\overline{\mathrm{co}}(A))\Longrightarrow
\mathrm{Ext}(C)\subset \overline{A}.
\]
We shall now see that the above results can be naturally stated for $\Phi
$-convex-trace subsets of a completely regular topological space $X.$ To this
end, let us start with the following definition which extends the notion of an
extreme point in this topological setting.

\begin{definition}
	[$\Phi$-extreme point]\label{phi_extrem}Let $X$ be completely regular
	topological space and $S\subset X$. A point $x\in S$ is called $\Phi$-extreme in $S$ if $\delta_{x}^{\Phi}=\delta^{\Phi}(x)$ is an extreme point of
	$\overline{\mathrm{co}}^{w^{\ast}}\,(\delta^{\Phi}(S))$. We denote by
	$\mathrm{Ext}_{\Phi}(S)$ the set of all $\Phi$-extreme points of $S$.
\end{definition}

We shall now show that the Krein-Milman theorem holds true in our abstract setting.

\begin{theorem} [Abstract Krein-Milman theorem]\label{phi_KM} Let\ $S\subset X$ be a compact set. Then,
\[
\overline{\mathrm{co}}_{\Phi}(S)=
\overline{\mathrm{co}}_{\Phi}(\mathrm{Ext}_{\Phi}(S)).
\]
Therefore, if $C\subset X$ is compact and $\Phi$-trace convex, then
$C=\overline{\mathrm{co}}_{\Phi}(\mathrm{Ext}_{\Phi}(C))$. \smallskip \\
In other words, a
compact $\Phi$-convex trace set is the $\Phi$-convex hull of its $\Phi$-extreme points.
\end{theorem}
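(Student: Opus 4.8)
The plan is to transfer the problem to the $w^{\ast}$-compact convex set $K(\Phi)\subset\Phi^{\ast}$ via the homeomorphism $\delta^{\Phi}$, apply the classical Krein–Milman theorem there, and pull the conclusion back using Proposition~\ref{convexification}. First I would set $A:=\delta^{\Phi}(S)\subset K(\Phi)$; since $S$ is compact and $\delta^{\Phi}$ is a homeomorphism onto its image, $A$ is a $w^{\ast}$-compact subset of $K(\Phi)$. The classical (precise) Krein–Milman theorem applied in the locally convex space $(\Phi^{\ast},w^{\ast})$ to the compact set $A$ gives $\overline{\mathrm{co}}^{w^{\ast}}(A)=\overline{\mathrm{co}}^{w^{\ast}}(\mathrm{Ext}(\overline{\mathrm{co}}^{w^{\ast}}(A)))$. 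Now by Definition~\ref{phi_extrem}, the $\Phi$-extreme points of $S$ are exactly those $x\in S$ with $\delta_{x}^{\Phi}\in\mathrm{Ext}(\overline{\mathrm{co}}^{w^{\ast}}(A))$; equivalently, $\delta^{\Phi}(\mathrm{Ext}_{\Phi}(S))=\delta^{\Phi}(S)\cap\mathrm{Ext}(\overline{\mathrm{co}}^{w^{\ast}}(A))$.

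The key point to check is that one does not lose extreme points in this intersection, i.e. that every extreme point of $\overline{\mathrm{co}}^{w^{\ast}}(A)$ already lies in $A=\delta^{\Phi}(S)$. This is precisely Milman's (partial converse) theorem: since $A$ is closed (being compact) and $\overline{\mathrm{co}}^{w^{\ast}}(A)$ is its closed convex hull, Milman's theorem yields $\mathrm{Ext}(\overline{\mathrm{co}}^{w^{\ast}}(A))\subset\overline{A}=A$. Hence $\mathrm{Ext}(\overline{\mathrm{co}}^{w^{\ast}}(A))=\delta^{\Phi}(\mathrm{Ext}_{\Phi}(S))$, and therefore $\overline{\mathrm{co}}^{w^{\ast}}(A)=\overline{\mathrm{co}}^{w^{\ast}}(\delta^{\Phi}(\mathrm{Ext}_{\Phi}(S)))$.

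It remains to pull this equality back to $X$. Applying the operator $X\cap(\delta^{\Phi})^{-1}(\,\cdot\,)$ to both sides and invoking Proposition~\ref{convexification} (with $S$ on the left and $\mathrm{Ext}_{\Phi}(S)$ on the right), we obtain $\overline{\mathrm{co}}_{\Phi}(S)=X\cap(\delta^{\Phi})^{-1}(\overline{\mathrm{co}}^{w^{\ast}}(\delta^{\Phi}(S)))=X\cap(\delta^{\Phi})^{-1}(\overline{\mathrm{co}}^{w^{\ast}}(\delta^{\Phi}(\mathrm{Ext}_{\Phi}(S))))=\overline{\mathrm{co}}_{\Phi}(\mathrm{Ext}_{\Phi}(S))$, which is the first assertion. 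For the second assertion, if $C\subset X$ is compact and $\Phi$-trace convex, then $\overline{\mathrm{co}}_{\Phi}(C)=C$: indeed $C\subset\overline{\mathrm{co}}_{\Phi}(C)$ trivially, while $C\in\mathcal{P}_{TC}(X)$ makes $C$ one of the sets in the intersection defining $\overline{\mathrm{co}}_{\Phi}(C)$, so $\overline{\mathrm{co}}_{\Phi}(C)\subset C$. Substituting $S=C$ into the first assertion gives $C=\overline{\mathrm{co}}_{\Phi}(\mathrm{Ext}_{\Phi}(C))$.

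The main obstacle — really the only nontrivial input beyond bookkeeping — is ensuring the applicability of Milman's theorem, which requires $A$ to be closed; this is why compactness of $S$ (hence of $A$, via the homeomorphism $\delta^{\Phi}$ on compact sets) is assumed in the statement, and it is the one hypothesis that cannot be relaxed without care. A secondary subtlety is that $\overline{\mathrm{co}}^{w^{\ast}}(A)$ is genuinely $w^{\ast}$-compact (so that classical Krein–Milman applies): this follows because $A\subset K(\Phi)$ and $K(\Phi)$ is $w^{\ast}$-compact and convex, so the closed convex hull of $A$ stays inside $K(\Phi)$ and is $w^{\ast}$-closed, hence $w^{\ast}$-compact.
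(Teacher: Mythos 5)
Your proposal is correct and follows essentially the same route as the paper's proof: apply the classical Krein--Milman theorem to $\overline{\mathrm{co}}^{w^{\ast}}(\delta^{\Phi}(S))$ in $(\Phi^{\ast},w^{\ast})$, use Milman's partial converse to identify $\mathrm{Ext}(\overline{\mathrm{co}}^{w^{\ast}}(\delta^{\Phi}(S)))$ with $\delta^{\Phi}(\mathrm{Ext}_{\Phi}(S))$, and pull back via Proposition~\ref{convexification}. Your extra remarks on the compactness of the closed convex hull and on why Milman's theorem applies are correct details that the paper leaves implicit.
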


\begin{proof} Applying the Krein-Milman theorem in the locally convex space
$(\Phi^{\ast},w^{\ast})$ for the convex compact set $C:=\overline{\mathrm{co}}^{w^{\ast}}\,(\delta^{\Phi}(S))\subset K(\Phi)$ we have that
\[
\overline{\mathrm{co}}^{w^{\ast}}\,(\delta^{\Phi}(S))=
\overline{\mathrm{co}}^{w^{\ast}}\,(\mathrm{Ext}(\overline{\mathrm{co}}^{w^{\ast}}\,(\delta^{\Phi}(S))).
\]
On the other hand, by the partial converse of the Krein-Milman theorem
(Milman's theorem), setting $A=\delta^{\Phi}(S)$ we deduce that
\[
\mathrm{Ext}(\overline{\mathrm{co}}^{w^{\ast}}\,(\delta^{\Phi}(S))\subset
\overline{\delta^{\Phi}(S)}^{w^{\ast}}=\delta^{\Phi}(S).
\]
It follows from the definition of $\Phi$-extreme points that
\[
\mathrm{Ext}(\overline{\mathrm{co}}^{w^{\ast}}\,(\delta^{\Phi}(S))=\delta
^{\Phi}(\mathrm{Ext}_{\Phi}(S)).
\]
Using Proposition~\ref{convexification}, the Krein-Milman theorem and the
above equality, we have
\begin{align*}
\overline{\mathrm{co}}_{\Phi}(S)  &  =X\cap \left(  \delta^{\Phi}\right)
^{-1}(\overline{\mathrm{co}}^{w^{\ast}}\,(\delta^{\Phi}(S)))\\
&  =X\cap \left(  \delta^{\Phi}\right)  ^{-1}(\overline{\mathrm{co}}^{w^{\ast}
}\,(\mathrm{Ext}(\overline{\mathrm{co}}^{w^{\ast}}\,(\delta^{\Phi}(S)))))\\
&  =X\cap \left(  \delta^{\Phi}\right)  ^{-1}(\overline{\mathrm{co}}^{w^{\ast}
}\,(\delta^{\Phi}(\mathrm{Ext}_{\Phi}(S))))\\
&  =\overline{\mathrm{co}}_{\Phi}(\mathrm{Ext}_{\Phi}(S)).
\end{align*}
This gives the first part of the theorem. For the second part, if $C$ is
assumed compact and $\Phi$-convex trace, then $C=\overline{\mathrm{co}}_{\Phi
}(C)$ and the conclusion follows from the first part.
\end{proof}

\begin{corollary}\label{cor-ch-bd}
Let $K$ be a compact Hausdorff topological space and $\Phi$ be a closed
subspace of~$\mathcal{C}(K)$ containing the constant functions and separating
points in $K$. Then, we have
\[
K=\overline{\mathrm{co}}_{\Phi}(\partial_{\Phi}(K)).
\]
	
\end{corollary}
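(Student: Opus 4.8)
The plan is to deduce Corollary~\ref{cor-ch-bd} as a direct application of the Abstract Krein-Milman theorem (Theorem~\ref{phi_KM}) with $X=K$. Since $K$ is compact and (trivially) $\Phi$-convex-trace (indeed $\delta^{\Phi}(K)=\delta^{\Phi}(K)\cap K(\Phi)$, so~\eqref{eq:TC-set} holds with $\widehat{C}=K(\Phi)$), Theorem~\ref{phi_KM} applies to $C=K$ and gives
\[
K=\overline{\mathrm{co}}_{\Phi}(\mathrm{Ext}_{\Phi}(K)).
\]
So the whole task reduces to identifying $\mathrm{Ext}_{\Phi}(K)$ with the Choquet boundary $\partial_{\Phi}(K)$.

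The key step is therefore the set equality $\mathrm{Ext}_{\Phi}(K)=\partial_{\Phi}(K)$. By Definition~\ref{phi_extrem}, $x\in \mathrm{Ext}_{\Phi}(K)$ means $\delta_{x}^{\Phi}$ is an extreme point of $\overline{\mathrm{co}}^{w^{\ast}}(\delta^{\Phi}(K))$. But we already recorded in Section~\ref{sec:2} that the $w^{\ast}$-compact convex set $K(\Phi)$ coincides with the $w^{\ast}$-closed convex hull of $\{\delta_{x}^{\Phi}:x\in K\}=\delta^{\Phi}(K)$; hence $\overline{\mathrm{co}}^{w^{\ast}}(\delta^{\Phi}(K))=K(\Phi)$ and the condition becomes $\delta_{x}^{\Phi}\in \mathrm{Ext}\,K(\Phi)$. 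By the characterization~\eqref{eq:Ch-boundary} of the Choquet boundary, this is exactly the statement $x\in \partial_{\Phi}(K)$. This settles $\mathrm{Ext}_{\Phi}(K)=\partial_{\Phi}(K)$.

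Plugging this identity into the conclusion of Theorem~\ref{phi_KM} yields
\[
K=\overline{\mathrm{co}}_{\Phi}(\mathrm{Ext}_{\Phi}(K))=\overline{\mathrm{co}}_{\Phi}(\partial_{\Phi}(K)),
\]
which is the desired equality. I do not anticipate a genuine obstacle here: the only point worth a line of care is checking that $K$ qualifies as a ``compact $\Phi$-convex-trace set'' so that the second part of Theorem~\ref{phi_KM} is applicable --- this is immediate from Definition~\ref{def_TC-set} as noted above --- and that the closed convex hull appearing in Definition~\ref{phi_extrem} is the whole of $K(\Phi)$, which is exactly~\eqref{eq:M1(K)}/\eqref{eq: K(Phi)} transported through $\delta^{\Phi}$. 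Everything else is a substitution.
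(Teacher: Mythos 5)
Your proposal is correct and follows exactly the paper's own argument: identify $\mathrm{Ext}_{\Phi}(K)$ with $\partial_{\Phi}(K)$ via~\eqref{eq:Ch-boundary} and the fact that $K(\Phi)=\overline{\mathrm{co}}^{w^{\ast}}(\delta^{\Phi}(K))$, observe that $K$ is trivially $\Phi$-convex-trace, and apply Theorem~\ref{phi_KM}. No issues.
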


\begin{proof} Comparing Definition~\ref{phi_extrem} with~\eqref{eq:Ch-boundary} we easily see that $x\in \mathrm{Ext}_{\Phi}(K)$ if and only if $x\in\partial_{\Phi}(K)$. In other words, the $\Phi$-extreme points of $K$ and the
elements of the Choquet boundary of $K$ are the same. On the other hand, the
set $K$ is trivially $\Phi$-trace convex, since $K(\Phi)$ is convex compact
and $\delta^{\Phi}(K)=\delta^{\Phi}(K)\cap K(\Phi)$. The conlusion is
straightforward from Theorem~\ref{phi_KM}. \end{proof}

\begin{remark} [Comparison with the Ky Fan theory] \label{rem-KyFan} According to the Ky Fan theory (\cite{Ky},	\cite{Kh}), given $y,z\in K,$ the $\Phi$-segment $[y,z]_{\Phi}$ is defined to be the set of all $x\in K$ such that for any $\phi \in \Phi$ the following implication holds:
	\begin{equation}
	\phi(x)\leq \min \{ \phi(y),\phi(z)\} \Longrightarrow \phi(x)=\phi(x)=\phi
	(y).\label{eq:KyF}
	\end{equation}
Then a point $x\in K$ is called $\Phi$-extreme (in the sense of Ky Fan) for
the compact set $K$ if whenever $x\in \lbrack y,z]_{\Phi}$ for $y,z\in K$, it
holds $x=y=z$. We now prove the following claim.\smallskip
	
\noindent \textit{Claim}. Every $\Phi$-extreme point of $K$ (cf. Definition~\ref{phi_extrem}) is $\Phi$-extreme in the sense of Ky Fan.
\smallskip\newline
\noindent \textit{Proof of the Claim.} Indeed, by~\eqref{eq:Ch-boundary} we have $x\in \mathrm{Ext}_{\Phi}(K)\Leftrightarrow \delta_{x}^{\Phi}\in \mathrm{Ext}(K(\Phi))$ (recall that
$K(\Phi)=\overline{\mathrm{co}}^{w^{\ast}}\,(\delta^{\Phi}(K))$). Let us
assume, towards a contradiction, that $x\in \mathrm{Ext}_{\Phi}(K)$ and
$x\in \lbrack y,z]_{\Phi}$ for some $y,z\in K$ with $y\neq x$. Then if $z=x$,
since $\Phi$ separates points in $K$ we get $\phi(x)=\phi(z)<\phi(y)$ for some
$\phi \in \Phi$, contradicting~\eqref{eq:KyF}. If now both $y,z$ are different than~$x$, then in view of~\eqref{eq:Ch-boundary} $\delta_{x}^{\Phi}$ is extreme in $K(\Phi)$ and consequently
$\delta_{x}^{\Phi}\notin \lbrack \delta_{y}^{\Phi},\delta_{z}^{\Phi}]$ (the usual segment in the $w^{\ast}$-compact convex set
	$K(\Phi)\subset \Phi^{\ast}$). By Hahn-Banach theorem (for the $\sigma
	(\Phi^{\ast},\Phi)$-topology of $\Phi^{\ast}$) we deduce that for some
	$\phi \in \Phi$,
	\[
	\min \{ \phi(y),\phi(z)\}=\min \{ \langle \delta_{y}^{\Phi},\phi \rangle
	,\langle \delta_{z}^{\Phi},\phi \rangle \} \geq \min_{Q\in \lbrack \delta_{y}^{\Phi
		}.\delta_{z}^{\Phi}]}\, \langle Q,\phi \rangle>\langle \delta_{x}^{\Phi},\phi \rangle=\phi(x),
	\]
which again contradicts~\eqref{eq:KyF}. This completes the proof of the claim. \smallskip \newline
\noindent The converse of the claim is not true in general, since $\Phi$-extreme points in the sense of Ky Fan may be numerous. To see this, take for instance $X=\mathbb{D}$ to be the unit disk of the complex plane and  $\Phi$ be the class of harmonic functions of the open disk, which are continuous on the closed disk $\mathbb{\bar{D}}$ (see Subsection~\ref{ssec:5.2}). Then we easily see that all $\Phi$-segments are trivial (singletons) and consequently all points of $\mathbb{\bar{D}}$ are extreme (whereas the Choquet boundary of $\mathbb{D}$ coincides with the usual topological boundary).\smallskip\newline 	
Therefore, Theorem~\ref{phi_KM} is an enhanced version of the Ky Fan result
in~\cite{Ky} (see also~\cite{Kh}).
\end{remark}

\section{Maximum principles for convex-trace functions.}
\label{sec:NEW}

In this section we establish a general version of maximum principle, that goes beyond Choquet convexity, and is adapted to the setting of Definition~\ref{def_convex-trace}. In particular: \smallskip

In Subsection~\ref{ssec:3-4} we establish a maximum principle for upper semicontinuous convex-trace functions on a compact topological space (Theorem~\ref{Choquet1}), generalizing the maximum principle obtained in \cite[Section~3.2]{LNV} in a twofold aspect: the function $f$ is not necessarily continuous, and the compact $K$ is not assumed to be metrizable.\smallskip 

In Subsection~\ref{sec:4} we consider the metrizable case and establish enhanced versions of the maximum principle evoking a family of functions as well as a genericity result.  

\subsection{Maximum principle in topological spaces.}
\label{ssec:3-4}

We recall from Corollary~\ref{equiv} that the class of Choquet convex functions coincides with the class of continuous convex-trace functions, while our results are formulated in $T\mathcal{C}^{>}(K)$ (upper semicontinuous convex-trace functions). Our results are based on the classical Bauer maximum principle. \smallskip

Before we proceed, let us introduce the following notation: for a nonempty set $C$ and a function $f:C\to{\mathbb{R}}$, we denote by
\[
C_{\max} (f):=\lbrace \bar x\in C: f(\bar x)=\max_{x\in C} f(x)\rbrace\,,
\]
the set of maximizers of $f$ on $C$. We also denote by $C_{\min} (f):= C_{\max}(-f)$ the set of minimizers of $f$ on $C$. Under this notation we have the following result:

\begin{proposition}[Maximizing a convex function on $K(\Phi)$]
\label{max} Let $K$ be a compact space. Let $F : (K(\Phi), w^{*})
\to{\mathbb{R}}$ be an upper semicontinuous convex function. Then, we have that
\[
\max_{Q \in K(\Phi)} F(Q) \,=\, \max_{x\in K} (F \circ \delta^{\Phi} )(x),
\]
and consequently, 
$$\delta^{\Phi}(K_{\max}( F\circ \delta^{\Phi} ))\subset [K(\Phi)]_{\max}(F).$$
\end{proposition}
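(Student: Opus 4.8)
The plan is to reduce the statement to the classical Bauer maximum principle applied on the $w^{*}$-compact convex set $K(\Phi)\subset(\Phi^{*},w^{*})$. Since $K(\Phi)$ is convex and $w^{*}$-compact (cf.~\eqref{eq: K(Phi)}) and $F$ is upper semicontinuous convex on it, Bauer's theorem guarantees that $F$ attains its maximum over $K(\Phi)$ at some extreme point $Q_{0}\in\mathrm{Ext}\,K(\Phi)$. The key point is then to recall that $K(\Phi)=\overline{\mathrm{co}}^{w^{*}}(\delta^{\Phi}(K))$, so that by Milman's (partial converse) theorem every extreme point of $K(\Phi)$ lies in the $w^{*}$-closure of $\delta^{\Phi}(K)$; but $\delta^{\Phi}(K)$ is already $w^{*}$-compact (being the continuous image of the compact $K$ under $\delta^{\Phi}$), hence $w^{*}$-closed, so $\mathrm{Ext}\,K(\Phi)\subset\delta^{\Phi}(K)$. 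Thus $Q_{0}=\delta^{\Phi}_{x_{0}}$ for some $x_{0}\in K$.

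From here both asserted facts follow readily. First,
\[
\max_{Q\in K(\Phi)}F(Q)=F(Q_{0})=F(\delta^{\Phi}_{x_{0}})=(F\circ\delta^{\Phi})(x_{0})\le\max_{x\in K}(F\circ\delta^{\Phi})(x).
\]
The reverse inequality $\max_{x\in K}(F\circ\delta^{\Phi})(x)\le\max_{Q\in K(\Phi)}F(Q)$ is immediate because $\delta^{\Phi}(K)\subset K(\Phi)$, so equality holds. (One should also note the maximum on the right is attained, since $F\circ\delta^{\Phi}$ is upper semicontinuous on the compact space $K$ — $\delta^{\Phi}$ is continuous and $F$ is usc.) For the inclusion $\delta^{\Phi}(K_{\max}(F\circ\delta^{\Phi}))\subset[K(\Phi)]_{\max}(F)$, take any $\bar x\in K_{\max}(F\circ\delta^{\Phi})$; then $F(\delta^{\Phi}_{\bar x})=(F\circ\delta^{\Phi})(\bar x)=\max_{x\in K}(F\circ\delta^{\Phi})(x)=\max_{Q\in K(\Phi)}F(Q)$ by the equality just proved, so $\delta^{\Phi}_{\bar x}$ is a maximizer of $F$ on $K(\Phi)$, i.e.\ $\delta^{\Phi}_{\bar x}\in[K(\Phi)]_{\max}(F)$.

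The only genuinely delicate point is justifying that the maximum of $F$ over $K(\Phi)$ is attained at a point of the form $\delta^{\Phi}_{x_{0}}$, i.e.\ the combination of Bauer's principle with Milman's theorem; everything else is a one-line inclusion or an elementary compactness remark. I would present the argument in the order: (1) recall $K(\Phi)$ is $w^{*}$-compact convex and $\delta^{\Phi}(K)$ is $w^{*}$-compact; (2) apply Bauer to get a maximizing extreme point; (3) invoke Milman to place it in $\delta^{\Phi}(K)$; (4) deduce the equality of maxima; (5) deduce the inclusion of maximizer sets. No step requires a computation beyond substitution.
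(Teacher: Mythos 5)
Your proof is correct and follows essentially the same route as the paper: apply Bauer's maximum principle on the $w^{*}$-compact convex set $K(\Phi)$ and then observe that the maximizing extreme point lies in $\delta^{\Phi}(K)$, from which the equality of maxima and the inclusion of maximizer sets follow by the trivial containment $\delta^{\Phi}(K)\subset K(\Phi)$. The only (harmless) difference is how you justify $\mathrm{Ext}\,K(\Phi)\subset\delta^{\Phi}(K)$: you derive it from Milman's partial converse applied to $K(\Phi)=\overline{\mathrm{co}}^{w^{*}}(\delta^{\Phi}(K))$ together with the $w^{*}$-closedness of $\delta^{\Phi}(K)$, whereas the paper invokes the known identity $\mathrm{Ext}(K(\Phi))=\delta^{\Phi}(\partial_{\Phi}(K))$ from~\eqref{eq:Ch-boundary}.
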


\begin{proof}
Using the classical Bauer theorem, we have that
\[
\max_{Q \in K(\Phi)} F(Q) \,=\, \max_{Q \in  \mathrm{Ext}(K(\Phi))}F(Q).
\]
Since $\mathrm{Ext}(K(\Phi))=\delta^{\Phi}(\partial_{\Phi}(K))\subset
\delta^{\Phi}(K)$, it follows that
\[
\max_{Q \in K(\Phi)}F(Q)\, \leq \, \max_{x \in K}(F\circ \delta^{\Phi}) (x).
\]
On the other hand, since $\delta^{\Phi}(K)\subset K(\Phi)$, the above inequality is in fact an equality. Therefore we have that
$\delta^{\Phi}(K_{\max}(f))\subset [K(\Phi)]_{\max}(F)$ as asserted. \end{proof}
\bigskip

We now establish the following result, which extends \cite[Maximum principle (page~241)]{LNV} from the class of Choquet convex functions (which coincides with $T\mathcal{C}(K)$)
to the class of usc convex-trace functions $T\mathcal{C}^{>}(K)$.

\begin{theorem}[Bauer maximum principle for usc convex-trace functions]
\label{Choquet1} Let $K$ be a compact space and $f:K\rightarrow
{\mathbb{R}}$ be an usc convex-trace function. Then, there exists
$\bar x\in \partial_{\Phi}(K)$ such that $f(\bar x)=\max_{x\in K}f(x)$.
\end{theorem}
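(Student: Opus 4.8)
The plan is to reduce the statement to the classical Bauer maximum principle on the $w^{*}$-compact convex set $K(\Phi)\subset\Phi^{*}$, using the identification $K\equiv\delta^{\Phi}(K)$ together with the description of the Choquet boundary as the extreme points of $K(\Phi)$ (formula~\eqref{eq:Ch-boundary}), and then transfer the maximizer back to $K$. Concretely, since $f\in T\mathcal{C}^{>}(K)$, Definition~\ref{def_convex-trace}(ii) provides an usc convex function $F\in\Gamma^{>}(K(\Phi))$ with $f=F\circ\delta^{\Phi}$. First I would invoke Proposition~\ref{max} to get
\[
\max_{Q\in K(\Phi)}F(Q)=\max_{x\in K}(F\circ\delta^{\Phi})(x)=\max_{x\in K}f(x),
\]
so the two maxima coincide; note that the maxima exist because $K(\Phi)$ and $K$ are compact and $F$, $f$ are usc.

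Next I would apply the classical Bauer maximum principle in the locally convex space $(\Phi^{*},w^{*})$: the usc convex function $F$ attains its maximum over the convex compact set $K(\Phi)$ at some extreme point $Q_{0}\in\mathrm{Ext}\,(K(\Phi))$. By~\eqref{eq:Ch-boundary} we have $\mathrm{Ext}\,(K(\Phi))=\delta^{\Phi}(\partial_{\Phi}(K))$, so there exists $\bar x\in\partial_{\Phi}(K)$ with $\delta_{\bar x}^{\Phi}=Q_{0}$. Then
\[
f(\bar x)=(F\circ\delta^{\Phi})(\bar x)=F(Q_{0})=\max_{Q\in K(\Phi)}F(Q)=\max_{x\in K}f(x),
\]
which is exactly the claimed conclusion. (Equivalently, one may simply quote the inclusion $\delta^{\Phi}(K_{\max}(f))\subset[K(\Phi)]_{\max}(F)$ from Proposition~\ref{max} in reverse: pick a Bauer-extreme maximizer of $F$, observe it lies in $\delta^{\Phi}(K)$, hence is of the form $\delta^{\Phi}_{\bar x}$ with $\bar x$ a maximizer of $f$ lying in the Choquet boundary.)

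The only genuine point requiring care — and the single step I expect to be the main obstacle — is verifying that the classical Bauer theorem is legitimately applicable here: namely that $K(\Phi)$, as a $w^{*}$-compact convex subset of the locally convex space $(\Phi^{*},\sigma(\Phi^{*},\Phi))$, has enough extreme points and that an usc convex function attains its max at one of them. This is the content of Bauer's principle in the locally convex setting, and it hinges on Krein--Milman (so on Hahn--Banach separation), which is available since $(\Phi^{*},w^{*})$ is a Hausdorff locally convex space and $K(\Phi)$ is convex and $w^{*}$-compact by~\eqref{eq: K(Phi)}. Everything else is a routine unwinding of the identifications established in Section~\ref{sec:2} and the definition of $T\mathcal{C}^{>}(K)$.
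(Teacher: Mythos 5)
Your proposal is correct and follows essentially the same route as the paper's own proof: lift $f$ to an usc convex $F$ on $K(\Phi)$, apply the classical Bauer maximum principle there, and pull the extreme-point maximizer back to the Choquet boundary via $\mathrm{Ext}(K(\Phi))=\delta^{\Phi}(\partial_{\Phi}(K))$. The only cosmetic difference is that you quote Proposition~\ref{max} for the equality of the two maxima, whereas the paper rederives that equality inline from $\delta^{\Phi}(K)\subset K(\Phi)$.
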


\begin{proof}
By definition, there exists an upper semicontinuous convex function
$F:(K(\Phi),w^{\ast})\rightarrow{\mathbb{R}}$ such that $f=F\circ \delta^{\Phi}$. Applying the classical Bauer theorem to $F$, there exists $\widehat{Q}\in \mathrm{Ext}(K(\Phi))$ such that
\[
\max_{Q \in K(\Phi)}F(Q)=F(\widehat{Q}).
\]
Since $\mathrm{Ext}(K(\Phi))=\delta^{\Phi}(\partial_{\Phi}(K))$, there exists
$\bar x\in \partial_{\Phi}(K)$ such that $\widehat{Q}=\delta_{\bar x}^{\Phi}$. It follows that
\[
\max_{Q \in K(\Phi)}F(Q)=f(\bar x).
\]
Since $\delta^{\Phi}(K)\subset K(\Phi)$, the inequality $\max_{Q\in K(\Phi)}F(Q) \geq \max_{x\in K}f(x)$ holds trivially. \\
The proof is complete. \end{proof}
 \bigskip

Theorem~\ref{Choquet1} yields directly the following result.
\begin{corollary}
Let $K$ be a compact space and $f:K\rightarrow{\mathbb{R}}$ be an
upper semicontinuous convex-trace function. If $f(x)\leq0$ for all $x\in \partial_{\Phi}(K)$, then
$f(x) \leq 0$, for all $x \in K$.
\end{corollary}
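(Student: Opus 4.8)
The plan is to apply Theorem~\ref{Choquet1} directly, since the hypotheses here are a strict specialization of that result. The corollary states that if $f \in T\mathcal{C}^{>}(K)$ satisfies $f(x) \leq 0$ for all $x \in \partial_{\Phi}(K)$, then $f(x) \leq 0$ for all $x \in K$. This is simply the contrapositive-flavored reading of the maximum principle: the maximum of $f$ over $K$ is attained at a Choquet-boundary point.

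First I would invoke Theorem~\ref{Choquet1}: since $f$ is usc convex-trace on the compact space $K$, there exists $\bar x \in \partial_{\Phi}(K)$ with $f(\bar x) = \max_{x \in K} f(x)$. Next, the hypothesis $f(x) \leq 0$ for all $x \in \partial_{\Phi}(K)$ applies in particular to $\bar x$, giving $\max_{x \in K} f(x) = f(\bar x) \leq 0$. Hence $f(x) \leq \max_{x \in K} f(x) \leq 0$ for every $x \in K$, which is the desired conclusion.

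There is essentially no obstacle here; the corollary is a one-line deduction from the theorem preceding it. The only point worth noting is that $f$ is assumed to be real-valued (not $+\infty$-valued) and $K$ is compact, so the maximum is genuinely attained and the argument does not require any compactness or boundedness caveats beyond what Theorem~\ref{Choquet1} already supplies. In short:
\begin{proof}
By Theorem~\ref{Choquet1}, there exists $\bar x \in \partial_{\Phi}(K)$ such that $f(\bar x) = \max_{x \in K} f(x)$. By hypothesis $f(\bar x) \leq 0$, hence $f(x) \leq \max_{x\in K} f(x) = f(\bar x) \leq 0$ for all $x \in K$.
\end{proof}
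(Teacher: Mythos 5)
Your proof is correct and is exactly the intended argument: the paper states that the corollary follows directly from Theorem~\ref{Choquet1}, and your one-line deduction (the maximizer lies in $\partial_{\Phi}(K)$, where $f\leq 0$ by hypothesis) is precisely that.
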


Recalling from \eqref{eq:f-hat} the definition of $\widehat{f}$, and combining Theorem~\ref{Choquet1} with Theorem~\ref{thm_f-hat}, we obtain the
following corollary.

\begin{corollary}
Let $K$ be a compact space and $f\in \mathcal{C}(K)$. Then, there
exists $\bar x\in \partial_{\Phi}(K)$ such that $\widehat{f}(\bar x) = \max_{x\in K}\widehat{f}(x)$.
\end{corollary}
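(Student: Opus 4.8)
The plan is to combine the two results the corollary explicitly invokes. First I would recall from Theorem~\ref{thm_f-hat} that for every $f\in\mathcal{C}(K)$ there exists a convex $w^{\ast}$-continuous function $F_{f}:(K(\Phi),w^{\ast})\rightarrow\mathbb{R}$ with $\widehat{f}=F_{f}\circ\delta^{\Phi}$; in particular $F_{f}$ is upper semicontinuous (being $w^{\ast}$-continuous), so $\widehat{f}$ belongs to the class $T\mathcal{C}^{>}(K)$ of usc convex-trace functions.

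Second, I would apply Theorem~\ref{Choquet1} directly to the function $\widehat{f}\in T\mathcal{C}^{>}(K)$: it furnishes a point $\bar{x}\in\partial_{\Phi}(K)$ such that $\widehat{f}(\bar{x})=\max_{x\in K}\widehat{f}(x)$, which is exactly the assertion. Alternatively, and equivalently, one can go one level down and invoke Proposition~\ref{max} (maximizing the convex usc function $F_{f}$ over $K(\Phi)$) together with the identification $\mathrm{Ext}(K(\Phi))=\delta^{\Phi}(\partial_{\Phi}(K))$ from~\eqref{eq:Ch-boundary} and the classical Bauer theorem; but this merely reproves the content of Theorem~\ref{Choquet1} in the special case $f=\widehat{f}$.

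There is essentially no obstacle here: the corollary is a transparent composition of Theorem~\ref{thm_f-hat} (which guarantees $\widehat{f}$ is a \emph{continuous}, hence usc, convex-trace function) and Theorem~\ref{Choquet1} (the Bauer maximum principle for that class). The only point requiring a word of care is to confirm that a convex-trace function in the sense of Definition~\ref{def_convex-trace}(i) (trace of a convex \emph{continuous} function on $K(\Phi)$) is indeed a convex-trace function in the sense of Definition~\ref{def_convex-trace}(ii) (trace of a convex \emph{usc} function), i.e.\ that $T\mathcal{C}(K)\subset T\mathcal{C}^{>}(K)$; this is immediate since continuity implies upper semicontinuity on the compact set $K(\Phi)$. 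Hence $\widehat{f}\in T\mathcal{C}(K)\subset T\mathcal{C}^{>}(K)$ and the conclusion follows.

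\begin{proof}
By Theorem~\ref{thm_f-hat}, there exists a convex $w^{\ast}$-continuous (in particular, upper semicontinuous) function $F_{f}:(K(\Phi),w^{\ast})\rightarrow\mathbb{R}$ such that $\widehat{f}=F_{f}\circ\delta^{\Phi}$. Hence $\widehat{f}$ is a continuous convex-trace function, and a fortiori $\widehat{f}\in T\mathcal{C}^{>}(K)$. Applying Theorem~\ref{Choquet1} to $\widehat{f}$, we obtain $\bar{x}\in\partial_{\Phi}(K)$ with $\widehat{f}(\bar{x})=\max_{x\in K}\widehat{f}(x)$.
\end{proof}
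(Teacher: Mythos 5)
Your proof is correct and follows exactly the route the paper intends: the corollary is stated as a direct combination of Theorem~\ref{thm_f-hat} (which gives $\widehat{f}=F_f\circ\delta^{\Phi}$ with $F_f$ convex and $w^{\ast}$-continuous, hence $\widehat{f}\in T\mathcal{C}(K)\subset T\mathcal{C}^{>}(K)$) and Theorem~\ref{Choquet1}. Nothing is missing.
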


\subsection{The maximum principle for compact metric space.}\label{sec:4}

In this subsection we focus on the case where the compact space $K$ is metrizable, which in fact, is the usual setting for the notion of Choquet convexity, and the framework considered in~\cite{LNV}. In this case, making use of the metric structure of $K$ and of the metrizability of the $w^*$-topology of $K(\Phi)$, and using an adequate version of variational principle, we are going to establish extensions of the Bauer maximum principle in two directions:
\begin{itemize}
 \item[--] We shall deal with the class $T\mathcal{C}^{>}(K)$ of upper semicontinuous convex-trace functions (this class contains strictly the class of Choquet convex functions).
 \item[--] We establish a multi-maximum result evoking a family of functions (Theorem~\ref{Pconvex1}), as well as an abstract generic result (Theorem~\ref{Choquet}).
 \end{itemize}   

Let us start by recalling from \cite[Lemma~3]{Ba} the following version of variational principle that we shall use in the sequel. 

\begin{lemma}[Variational Principle]\label{MBAD}  Let $(K,d)$ be a compact metric
space and $(\Phi,\Vert.\Vert_{\Phi})$ be a Banach space such that 
$\Phi \subset \mathcal{C}(K)$, $\Phi$ separates points in $K$ and for some $\alpha>0$ it holds:
$$ \alpha \, \Vert \phi \Vert_{\Phi}\,\geq\, \Vert \phi \Vert_{\infty}, \quad \text{ for all } \,\phi \in \Phi.$$
Let $f:(K,d)\rightarrow{\mathbb{R}}\cup \left \{  +\infty \right \}  $ be a proper
lower semicontinuous function. Then, the set
\[
N(f)=\big \{  \phi \in \Phi:\,\, K_{\min}(f-\phi) \text{ is not a singleton}  \,\,\big \}
\]
is of first Baire category in $\Phi$.
\end{lemma}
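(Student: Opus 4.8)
The plan is to deduce this variational principle from a known abstract variational principle applicable to compact metric spaces — namely the one of Deville--Godefroy--Zizler type, but adapted to the compact setting as in \cite{Ba}. The key observation is that the statement is really about generic uniqueness of minimizers, so the natural strategy is: (1) reduce to showing that a suitable set of ``perturbations'' yielding unique minimizers is dense $G_\delta$ (equivalently, that $N(f)$ is a countable union of nowhere dense sets), and (2) exploit the fact that $\Phi$ separates points of $K$ together with the compactness and metrizability of $K$ to get the required modulus of approximation. Since the lemma is quoted verbatim from \cite[Lemma~3]{Ba}, strictly speaking one may simply cite it; but if a self-contained argument is wanted, here is how I would proceed.

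First I would set, for each $n\in\N$,
\[
U_n \;=\; \Big\{\phi\in\Phi \;:\; \exists\, \eta>0 \text{ such that } \mathrm{diam}\big(\{x\in K: (f-\phi)(x) < \inf_K(f-\phi)+\eta\}\big) < \tfrac1n \Big\}.
\]
The claim is that each $U_n$ is open and dense in $\Phi$, and that $\Phi\setminus N(f) \supseteq \bigcap_n U_n$; indeed, if $\phi\in\bigcap_n U_n$ then the approximate-minimizer sets shrink to a point, and since $f-\phi$ is lsc on the compact $K$ the infimum is attained, so $K_{\min}(f-\phi)$ is a singleton. Openness of $U_n$ is routine: if $\phi\in U_n$ with parameter $\eta$, then any $\psi$ with $\|\psi-\phi\|_\infty < \eta/4$ (which holds once $\|\psi-\phi\|_\Phi < \eta/(4\alpha)$) still has small-diameter approximate-minimizer sets, because changing the perturbation by a uniformly small amount moves the sublevel sets only slightly. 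Hence $N(f) \subseteq \bigcup_n (\Phi\setminus U_n)$, a countable union of closed sets with empty interior, i.e.\ first category, provided we establish density.

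The density of $U_n$ is the main obstacle, and it is exactly where the hypotheses (compactness, metrizability, $\Phi$ separating points) are used. Fix $\phi_0\in\Phi$ and $\varepsilon>0$; I want $\phi\in\Phi$ with $\|\phi-\phi_0\|_\Phi<\varepsilon$ and $\phi\in U_n$. Replacing $f$ by $f-\phi_0$ we may assume $\phi_0=0$. Pick any $\bar x\in K_{\min}(f)$ (nonempty by lsc and compactness). Since $\Phi$ separates points and contains the constants, a standard argument (partition of unity / Stone--Weierstrass-type reasoning, or directly the construction in \cite{Ba}) produces a function $\phi\in\Phi$ with $\|\phi\|_\Phi$ as small as we like, $\phi(\bar x)=0$, and $\phi(x) \le -c\,\rho(x)$ for points $x$ with $d(x,\bar x)\ge 1/n$, where $\rho(x)>0$ is bounded below on that closed (hence compact) set and $c>0$ is a suitable constant; intuitively, $\phi$ is a small ``spike'' peaked at $\bar x$. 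Then $f-\phi$ has its minimum pulled toward $\bar x$: for $x$ with $d(x,\bar x)\ge 1/n$ we get $(f-\phi)(x) = f(x) - \phi(x) \ge f(\bar x) + c\,\rho(x) \ge \inf_K(f-\phi) + \delta$ for some $\delta>0$, so the $\delta$-approximate minimizers of $f-\phi$ all lie within distance $1/n$ of $\bar x$, giving $\phi\in U_n$ and $\|\phi\|_\Phi<\varepsilon$. This is precisely the content of the perturbation construction in \cite[Lemma~3]{Ba}, and the delicate point is producing, inside the prescribed Banach space $\Phi$ (which may be a space of affine or harmonic functions, with no bump functions available), a perturbation that is simultaneously small in norm and ``sharply peaked'' enough to localize the minimum — this is made possible by compactness and the uniform separation afforded by a point-separating $\Phi$.

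Finally, assembling the pieces: $N(f)\subseteq\bigcup_{n\in\N}(\Phi\setminus U_n)$ with each $\Phi\setminus U_n$ closed and nowhere dense (closed since $U_n$ is open, nowhere dense since $U_n$ is dense), so $N(f)$ is of first Baire category in $\Phi$, as claimed. Since $\Phi$ is a Banach space, it is Baire, so in particular $\Phi\setminus N(f)$ is dense. I would remark that the normalization hypothesis $\alpha\|\phi\|_\Phi \ge \|\phi\|_\infty$ is used only to translate $\Phi$-norm smallness into sup-norm smallness in the openness step and in controlling the perturbation, and that when $\Phi$ is a closed subspace of $\mathcal C(K)$ with the sup-norm itself one may take $\alpha=1$.
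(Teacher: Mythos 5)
First, note that the paper itself offers no proof of this lemma: it is recalled verbatim from \cite[Lemma~3]{Ba} and used as a black box, so your fallback of simply citing that reference is exactly what the authors do. The substantive content of your proposal is therefore the self-contained sketch, and that sketch has a genuine gap precisely at the point you yourself flag as ``delicate'': the density of $U_n$. Your mechanism is to pick an arbitrary minimizer $\bar x$ of $f$ (after translating by $\phi_0$) and to produce a small $\phi\in\Phi$ that is sharply peaked at $\bar x$, i.e.\ $\phi(\bar x)=0$ and $\phi(x)\le -c\,\rho(x)<0$ off the ball $B(\bar x,1/n)$. The hypotheses do not permit such a construction: separation of points only yields, for each pair $x\neq y$, some $\phi$ with $\phi(x)\neq\phi(y)$; it provides no localized spikes. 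Concretely, take $K=[0,1]$, $\Phi=\mathrm{Aff}([0,1])$ with the sup norm (a Banach space separating points, $\alpha=1$) and $f\equiv 0$, so that $K_{\min}(f)=[0,1]$ and you may choose $\bar x=\tfrac12$. An affine $\phi$ with $\phi(\tfrac12)=0$ and $\phi<0$ at both endpoints does not exist, so no perturbation in $\Phi$ localizes the minimum of $f-\phi$ near $\tfrac12$. The lemma is nevertheless true in this example --- a generic affine perturbation has its unique minimizer at $0$ or at $1$ --- which shows that any correct proof must allow the minimizer of the perturbed function to migrate (typically toward extreme/Choquet-boundary points) rather than pinning it near a prescribed minimizer of $f$. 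This is exactly why the result is nontrivial and why the paper stresses that spaces such as affine or harmonic functions carry no bump functions.

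The surrounding scaffolding of your argument is fine: each $U_n$ is open (your $\eta/4$ argument, using $\alpha\Vert\cdot\Vert_{\Phi}\geq\Vert\cdot\Vert_{\infty}$ to pass from $\Phi$-norm to sup-norm smallness), and $\bigcap_{n}U_n\subseteq\Phi\setminus N(f)$ because the minimizer set is contained in every approximate-minimizer set and is nonempty by lower semicontinuity and compactness. But with the density step unproved --- and unprovable by the spike mechanism --- the sketch does not constitute a proof; you would either have to reproduce the actual construction of \cite[Lemma~3]{Ba}, which is of a different nature, or, as the paper does, simply invoke that reference. A minor additional point: the lemma as stated does not assume that $\Phi$ contains the constant functions, so that hypothesis should not be used either.
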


Before we proceed, let us recall the following definition. 

\begin{definition} [$w^{\ast}$-exposed points]\label{def-exp} Let $E$ be a locally convex space and $S$ a nonempty $w^{\ast}$-closed subset of the dual space $E$. We say that $\bar{p}\in S$ is $w^{\ast}$-\emph{exposed} in $S$, and denote $\bar{p}\in w^{\ast}$-$\mathrm{Exp}(S)$, if there exists $x\in X$ such that
\[
\langle \bar{p},x\rangle>\langle p,x\rangle,\quad \text{for all }p\in
S\setminus \{ \bar{p}\}.
\]
	
\end{definition}

It is straightforward from Definition~\ref{def-extreme} and Definition~\ref{def-exp} that every $w^{\ast }$-exposed point is extreme, that is $w^{\ast }\text{-}\mathrm{Exp}(S)\subset \mathrm{Ext}(S)$.
This inclusion might in general be strict. \smallskip \newline
The classical Krein-Milman theorem ensures the existence
of extreme points for convex compact sets. However, in absence of
convexity, we cannot in general conclude that $\mathrm{Ext}(S)\neq \emptyset 
$. Still, the conclusion holds true if $E$ is a Banach space and $S\subset E$
is compact for either the norm or the weak topology. But if $E=X^{\ast }$ is
a dual Banach space and $S$ is $w^{\ast }$-compact, the conclusion could fail.
This being said, using Lemma~\ref{MBAD} we deduce an important instance of $w^{\ast }$-compact sets with extreme points by establishing the existence of 
$w^{\ast }$-exposed points.

\bigskip

\begin{lemma}[existence of extreme points]
	\label{lem-ext} Let $\Phi $ be a Banach space and $S\subset \Phi ^{\ast }$
	be $w^{\ast }$-compact and metrizable. Then $w^{\ast }$-$\mathrm{Exp}(S)\neq
	\emptyset $ and consequently $\mathrm{Ext}(S)\neq \emptyset .$
\end{lemma}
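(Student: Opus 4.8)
The plan is to deduce the existence of a $w^{\ast}$-exposed point of $S$ directly from the variational principle of Lemma~\ref{MBAD}, applied to a suitable lower semicontinuous function built from the compact metrizable set $S$. First I would observe that since $S\subset\Phi^{\ast}$ is $w^{\ast}$-compact and metrizable, it is in particular $w^{\ast}$-separable and norm-bounded (by Banach--Alaoglou, after noting that a $w^{\ast}$-compact set is $w^{\ast}$-bounded, hence norm-bounded by the uniform boundedness principle). Fix a countable $w^{\ast}$-dense sequence $(Q_n)_{n}$ in $S$, and consider a metric on $S$ compatible with the $w^{\ast}$-topology; using $S$-boundedness one can even realize such a metric in the form $d_{S}(P,Q)=\sum_{n}2^{-n}\,|\langle P-Q,\phi_n\rangle|$ for a suitable sequence $(\phi_n)\subset\Phi$ with $\|\phi_n\|_{\Phi}=1$. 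This is the step I expect to require a little care: one must check that this series defines a metric inducing exactly the $w^{\ast}$-topology on $S$, which follows from $w^{\ast}$-compactness plus the fact that $\{\phi_n\}$ separates the points of $S$ (the latter can be arranged since the $w^{\ast}$-topology on $S$ is second countable, so countably many functionals already separate points of $S$).

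Next I would set $K:=(S,d_{S})$, a compact metric space, and apply Lemma~\ref{MBAD} with this $K$ and the same Banach space $\Phi$: indeed $\Phi\subset\mathcal{C}(S)$ via the (restricted) canonical evaluation map $\phi\mapsto(Q\mapsto\langle Q,\phi\rangle)$, this family separates points of $S$ by construction, and the norm inequality $\alpha\|\phi\|_{\Phi}\geq\|\phi\|_{\infty,S}$ holds with $\alpha=\sup_{Q\in S}\|Q\|<\infty$ since $|\langle Q,\phi\rangle|\leq\|Q\|\,\|\phi\|_{\Phi}$ for all $Q\in S$. Taking $f\equiv 0$ (which is proper and $w^{\ast}$-lsc on the compact metric space $S$), Lemma~\ref{MBAD} tells us that the set of $\phi\in\Phi$ for which $K_{\min}(-\phi)=S_{\max}(\phi)$ fails to be a singleton is of first Baire category in $\Phi$. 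Since $\Phi$ is a Banach space, it is not meager in itself, so there exists $\phi\in\Phi$ with $S_{\max}(\phi)=\{\bar{Q}\}$ a singleton; that is, $\langle\bar{Q},\phi\rangle>\langle Q,\phi\rangle$ for all $Q\in S\setminus\{\bar{Q}\}$, which is precisely the statement that $\bar{Q}\in w^{\ast}\text{-}\mathrm{Exp}(S)$.

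Finally, the second assertion is immediate from the remark recalled just before the lemma: every $w^{\ast}$-exposed point is extreme, i.e.\ $w^{\ast}\text{-}\mathrm{Exp}(S)\subset\mathrm{Ext}(S)$, so $\mathrm{Ext}(S)\neq\emptyset$ as well. The main obstacle in this argument is not any deep fact but the bookkeeping in the first paragraph: verifying that a $w^{\ast}$-compact metrizable $S$ carries a compatible metric of the particular additive form required to invoke Lemma~\ref{MBAD} cleanly (equivalently, that one may choose the separating functionals $\phi_n$ inside $\Phi$ and normalized). Once $(S,d_{S})$ is recognized as a legitimate compact metric space on which $\Phi$ acts as a point-separating, norm-dominating subspace of continuous functions, the variational principle does all the work and the conclusion drops out with $f\equiv0$.
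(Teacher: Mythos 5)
Your proof is correct and follows essentially the same route as the paper: view $S$ itself as the compact metric space $K$ in Lemma~\ref{MBAD}, with $\Phi$ acting by evaluation (point-separating since distinct functionals in $\Phi^{\ast}$ differ on some $\phi$, and norm-dominating since $|\langle Q,\phi\rangle|\leq \|Q\|\,\|\phi\|_{\Phi}$ on the bounded set $S$), then take $f\equiv 0$ and use that a Banach space is nonmeager to produce a $\phi$ whose maximum on $S$ is attained at a single point, i.e.\ a $w^{\ast}$-exposed (hence extreme) point. The only divergence is your explicit construction of a compatible metric of additive form, which is unnecessary: metrizability of $(S,w^{\ast})$ is a hypothesis, and Lemma~\ref{MBAD} does not care which compatible metric is used.
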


\begin{proof}
Every $\phi \in \Phi $ can be seen as a continuous function on the compact
metric space $(S,w^{\ast })$. Set: $\alpha:=\max_{p\in S}\Vert p\Vert +1$. Then
\begin{equation*}
\Vert \phi \Vert _{\Phi }\,=\,\sup_{p\in \Phi ^{\ast }}\,|\langle \frac{p}{||p||},\phi \rangle |\,\,\geq \,\,\sup_{p\in S}\,|\langle \frac{p}{||p||},\phi \rangle |\,\,\geq \,\frac{1}{\alpha}\,\sup_{p\in S}\,|\langle p,\phi \rangle |\,=\frac{1}{\alpha}\,\Vert
\phi \Vert _{\infty },
\end{equation*}
where the last inequality is based on the linearity of $\phi $. Since
obviously $\Phi $ separates points in $S,$ we can apply Lemma~\ref{MBAD} to
the function $f\equiv 0$ to deduce that for a generic $\phi \in \Phi$, 
$-\phi $ attains a unique minimum on $S$ at some point $\bar{p}\in S$.
This yields that $\bar{p}\in w^{\ast }$-$\mathrm{Exp}(S)\subset \mathrm{Ext}(S)$. Therefore, both $w^{\ast }$-$\mathrm{Exp}(S)$ and $\mathrm{Ext}(S)$ are nonempty.
\end{proof}

\subsubsection{A multi-maximum principle.}
\label{ssec:4.1}
Let us first establish the following result, which has an independent
interest.

\begin{lemma}[Common extreme maximizer]
\label{Pconv} Let $\Phi $ be a Banach space and $C\subset \Phi ^{\ast }$ be convex $w^{\ast }$-compact and metrizable. Let $\{F_{i}\}_{i\in I}$ be a nonempty family of real-valued $w^{\ast }$-usc, convex functions on $(C,w^{\ast })$ with a common maximizer, that is, 
\begin{equation*}
	C_{\max }(I):=\bigcap_{i\in I}C_{\max }(F_{i})\neq \emptyset .
\end{equation*}
Then
\begin{equation*}
	w^{\ast }\text{-}\mathrm{Exp}(C_{\max }(I))\neq \emptyset \text{\quad
		and\quad }\mathrm{Ext}(C_{\max }(I))\subset \mathrm{Ext}(C).
\end{equation*}
In particular, there exists $\bar{p}\in \mathrm{Ext}(C)$ such that
\begin{equation*}
	F_{i}(\bar{p})=\max_{p\in C}F_{i}(p),\text{ \quad for all }i\in I.
\end{equation*}
\end{lemma}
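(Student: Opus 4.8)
The plan is to reduce everything to Lemma~\ref{MBAD} (the variational principle) applied to a single auxiliary lsc function on the convex $w^*$-compact metrizable set $C$, in the spirit of the proof of Lemma~\ref{lem-ext}. First I would set $K_0:=C_{\max}(I)$, which by hypothesis is nonempty; being an intersection of the sublevel-type sets $\{F_i=\max F_i\}$ with $C$, and each $F_i$ being $w^*$-usc, each set $C_{\max}(F_i)=\{p\in C:\,F_i(p)\geq \max_{C}F_i\}$ is $w^*$-closed, so $K_0$ is $w^*$-compact (and metrizable, as a subset of the metrizable $C$). Now I would pick any $\bar p\in K_0$ and, just as in Lemma~\ref{lem-ext}, invoke Lemma~\ref{MBAD} with $K=K_0$ and $f\equiv 0$, noting that the norm of $\Phi$ dominates a multiple of the sup-norm on $K_0$ exactly as computed there (with $\alpha:=\max_{p\in K_0}\|p\|+1$), and that $\Phi$ separates points of $K_0\subset\Phi^*$. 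This produces a $\phi\in\Phi$ such that $-\phi$ attains its minimum on $K_0$ at a single point $\bar p_0$, i.e. $\phi$ is maximized over $K_0$ uniquely at $\bar p_0$; hence $\bar p_0\in w^*\text{-}\mathrm{Exp}(K_0)\subset \mathrm{Ext}(K_0)$, giving the first assertion $w^*\text{-}\mathrm{Exp}(C_{\max}(I))\neq\emptyset$.

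The second assertion, $\mathrm{Ext}(C_{\max}(I))\subset\mathrm{Ext}(C)$, is the genuinely convex-analytic step. Let $\bar p\in\mathrm{Ext}(K_0)$ and suppose $\bar p\in(p_1,p_2)$ with $p_1,p_2\in C$, say $\bar p=tp_1+(1-t)p_2$, $t\in(0,1)$. Fix $i\in I$ and write $m_i:=\max_C F_i=F_i(\bar p)$ (the last equality because $\bar p\in K_0\subset C_{\max}(F_i)$). Convexity of $F_i$ gives $m_i=F_i(\bar p)\leq tF_i(p_1)+(1-t)F_i(p_2)\leq tm_i+(1-t)m_i=m_i$, forcing $F_i(p_1)=F_i(p_2)=m_i$, so $p_1,p_2\in C_{\max}(F_i)$. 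Since $i\in I$ was arbitrary, $p_1,p_2\in \bigcap_{i\in I}C_{\max}(F_i)=K_0$. But $\bar p$ is extreme in $K_0$ and $\bar p\in(p_1,p_2)$ with $p_1,p_2\in K_0$, hence $p_1=p_2=\bar p$. This shows $\bar p\in\mathrm{Ext}(C)$, completing the inclusion.

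Finally, combining the two assertions: by the first there is $\bar p\in w^*\text{-}\mathrm{Exp}(K_0)\subset\mathrm{Ext}(K_0)$, and by the second $\bar p\in\mathrm{Ext}(C)$; since $\bar p\in K_0=\bigcap_{i\in I}C_{\max}(F_i)$ we get $F_i(\bar p)=\max_{p\in C}F_i(p)$ for every $i\in I$, which is the "in particular" conclusion. I do not anticipate a serious obstacle here; the only point requiring a little care is checking that the hypotheses of Lemma~\ref{MBAD} are legitimately met on the restricted set $K_0$ — specifically that $\Phi$ (rather than some quotient of it) still separates points of $K_0$ and that the norm-domination inequality survives with the constant recomputed on $K_0$ instead of on $C$ — but this is routine and identical to the argument already used in Lemma~\ref{lem-ext}. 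One should also remark that $K_0$ need not be convex, which is precisely why one must extract an extreme point of a possibly non-convex compact set and why the metrizable variational principle (rather than plain Krein--Milman) is needed.
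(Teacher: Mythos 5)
Your proposal is correct and follows essentially the same route as the paper: both apply the variational principle (via Lemma~\ref{lem-ext}) to the nonempty $w^{\ast}$-compact metrizable set $C_{\max}(I)$ to produce a $w^{\ast}$-exposed point, and both use convexity of the $F_i$ to show that an extreme point of $C_{\max}(I)$ lying in an open segment of $C$ forces the endpoints into $C_{\max}(I)$, hence to coincide with it. The only cosmetic difference is that you argue the inclusion $\mathrm{Ext}(C_{\max}(I))\subset\mathrm{Ext}(C)$ directly, whereas the paper phrases the same convexity argument as a contradiction.
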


\begin{proof}
Since $F_{i}:(C,w^{\ast })\longrightarrow \mathbb{R}$ is usc, the set ${C}_{\max }(F_{i})$ is nonempty and $w^{\ast }$-compact in $\Phi ^{\ast }$. By
hypothesis,
\begin{equation*}
C_{\max }(I)=\bigcap_{i\in I}C_{\max }(F_{i})\neq \emptyset .
\end{equation*}
Since $S=C_{\max }(I)$ is nonempty $w^{\ast }$-compact and metrizable in 
$\Phi ^{\ast }$, applying Lemma~\ref{lem-ext} we deduce that 
\begin{equation*}
w^{\ast }\text{-}\mathrm{Exp}(S)\neq \emptyset .
\end{equation*}
It remains to show that $\mathrm{Ext}(S)\subset \mathrm{Ext}(C).$ To
this end, let $\bar{p}\in \mathrm{Ext}(S)$ and assume, towards a
contradiction, that there exists $p_{1},p_{2}\in C\setminus \{\bar{p}\}$
such that $\bar{p}\in (p_{1},p_{2}).$ Since $\bar{p}\in \mathrm{Ext}(S)$, we
may assume with no loss of generality that $p_{1}\in C\setminus S$.
Therefore, there exists $i_{0}\in I$ such that $p_{1}\not\in {C}_{\max}(F_{i_{0}})$. It follows that 
\begin{equation*}
F_{i_{0}}(p_{1})<\max_{p\in C}F_{i_{0}}(p)=F_{i_{0}}(\bar{p})\text{ \quad
	and \quad }F_{i_{0}}(p_{2})\leq \max_{p\in C}f_{i_{0}}(p)=F_{i_{0}}(\bar{p}),
\end{equation*}
which contradicts the fact that $F_{i_{0}}$ is convex. Thus, $w^{\ast}$-$\mathrm{Exp}(S)\subset \mathrm{Ext}(S)\subset \mathrm{Ext}(C)$ and the conclusion follows.\end{proof}

\bigskip 

We are now ready to establish the following result which is a generalized version of Bauer's maximum principle. Roughly speaking, whenever
a family of usc convex-trace functions on $K$ has at least one common
maximizer, then a common maximizer can be found among the points of the
Choquet boundary of $K$.

\begin{theorem} \label{Pconvex1} Let $K$ be a compact metric space and $\Phi$ a closed subspace of $\mathcal{C}(K)$ that separates points in $K$ and contains the constant functions. Let further $\{f_{i}\}_{i\in I}\subset T\mathcal{C}^{>}(K,\Phi )$ be such that 
	\begin{equation}\label{eq:hak}
K_{\max }(I):=\bigcap_{i\in I}{K}_{\max }(f_{i})\neq \emptyset .
	\end{equation}
Then, there exists $\bar{x}\in \partial_{\Phi }(K)$ (Choquet boundary of $K$) such that
	\begin{equation*}
f_{i}(\bar{x})=\max_{x\in K}f_{i}(x),\quad \text{for every }i\in I.
	\end{equation*}
\end{theorem}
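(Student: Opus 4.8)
The plan is to transfer the problem from $K$ to the convex $w^\ast$-compact set $K(\Phi) \subset \Phi^\ast$ via the canonical injection $\delta^\Phi$, and then invoke Lemma~\ref{Pconv} (Common extreme maximizer). By Definition~\ref{def_convex-trace}(ii), for each $i \in I$ there exists a $w^\ast$-usc convex function $F_i : (K(\Phi), w^\ast) \to {\mathbb{R}}$ with $f_i = F_i \circ \delta^\Phi$. I first observe that $C := K(\Phi)$ is convex and $w^\ast$-compact, and that it is metrizable for the $w^\ast$-topology: indeed, $K$ is a compact metric space, hence $\mathcal{C}(K)$ is separable, hence $\Phi$ (a closed subspace) is separable, and therefore the $w^\ast$-topology on the bounded set $K(\Phi)$ is metrizable. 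So the hypotheses of Lemma~\ref{Pconv} are met for the family $\{F_i\}_{i \in I}$ on $C = K(\Phi)$, provided I can check the common maximizer condition $\bigcap_{i \in I} C_{\max}(F_i) \neq \emptyset$.

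The key step is thus to verify that the common maximizer hypothesis~\eqref{eq:hak} on $K$ lifts to a common maximizer for the $F_i$ on $K(\Phi)$. By Proposition~\ref{max} (Maximizing a convex function on $K(\Phi)$), for each $i$ we have $\max_{Q \in K(\Phi)} F_i(Q) = \max_{x \in K} f_i(x)$ and $\delta^\Phi(K_{\max}(f_i)) \subset [K(\Phi)]_{\max}(F_i)$. Pick any $\bar{x}_0 \in K_{\max}(I) = \bigcap_{i \in I} K_{\max}(f_i)$, which is nonempty by assumption. Then $\delta^\Phi(\bar{x}_0) \in [K(\Phi)]_{\max}(F_i)$ for every $i \in I$, so $\delta^\Phi(\bar{x}_0) \in \bigcap_{i \in I} [K(\Phi)]_{\max}(F_i) =: C_{\max}(I)$, which is therefore nonempty.

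Now Lemma~\ref{Pconv} applies and yields a point $\bar{Q} \in \mathrm{Ext}(K(\Phi))$ such that $F_i(\bar{Q}) = \max_{Q \in K(\Phi)} F_i(Q)$ for all $i \in I$. By the identification~\eqref{eq:Ch-boundary}, $\mathrm{Ext}(K(\Phi)) = \delta^\Phi(\partial_\Phi(K))$, so there exists $\bar{x} \in \partial_\Phi(K)$ with $\bar{Q} = \delta^\Phi_{\bar{x}}$. Then for every $i \in I$,
\[
f_i(\bar{x}) = F_i(\delta^\Phi_{\bar{x}}) = F_i(\bar{Q}) = \max_{Q \in K(\Phi)} F_i(Q) = \max_{x \in K} f_i(x),
\]
using Proposition~\ref{max} for the last equality. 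This is exactly the desired conclusion.

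The only genuine obstacle is the metrizability of $(K(\Phi), w^\ast)$, which is needed to apply Lemma~\ref{Pconv} (and, underneath it, the variational principle Lemma~\ref{MBAD}); this is where the metrizability assumption on $K$ is essential and cannot be dropped, in contrast to Theorem~\ref{Choquet1}. Everything else is a routine transfer via $\delta^\Phi$ using Proposition~\ref{max} and the extreme-point identification~\eqref{eq:Ch-boundary}. One should also note that the functions $F_i$ are genuinely real-valued (not merely extended-real-valued) on $K(\Phi)$, so that $C_{\max}(F_i)$ is well-behaved — this is built into Definition~\ref{def_convex-trace}(ii).
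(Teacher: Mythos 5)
Your proposal is correct and follows essentially the same route as the paper's own proof: metrizability of $(K(\Phi),w^{\ast})$ via separability of $\Phi$, lifting the common maximizer through Proposition~\ref{max}, applying Lemma~\ref{Pconv} to get an extreme common maximizer, and pulling it back to the Choquet boundary via $\mathrm{Ext}(K(\Phi))=\delta^{\Phi}(\partial_{\Phi}(K))$. No gaps.
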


\begin{proof}
Since $K$ is compact metric space, $\mathcal{C}(K)$ is separable and so is
its closed subspace $\Phi$. It follows that the convex $w^{\ast }$-compact
subset $K(\Phi )$ of $\Phi ^{\ast }$ is metrizable. By Definition~\ref{def_convex-trace}, for each $i\in I$ there exists an usc convex function $F_{i}:(K(\Phi ),w^{\ast })\rightarrow {\mathbb{R}}$ such that $f_{i}=F_{i}\circ \delta ^{\Phi }$. Set $C:=(K(\Phi ),w^{\ast })$. By
Proposition~\ref{max}, $\delta ^{\Phi }(K_{\max }(I))\subset
\bigcap_{i\in I}C_{\max }(F_{i})$, therefore by \eqref{eq:hak}
\begin{equation*}
C(I):=\bigcap_{i\in I}C_{\max }(F_{i})\neq \emptyset .
\end{equation*}
Then Lemma~\ref{Pconv} yields the existence of a common maximizer 
\begin{equation*}
\bar{Q}\in \mathrm{Ext}(K(\Phi ))=\delta ^{\Phi }(\partial _{\Phi }(K))
\end{equation*}
for all usc convex functions $F_{i}$, $i\in I$. Therefore, there exists $\bar{x}\in \partial _{\Phi }(K)$ such that 
$\bar{Q}=\delta _{\bar{x}}^{\Phi}$. Since 
\begin{equation*}
\max_{x\in K}\,f_{i}(x)\,\leq \,\max_{Q\in K(\Phi)}F_{i}(Q)\,=\,F_{i}(\delta_{\bar{x}}^{\Phi })\,=\,f_{i}(\bar{x}),\quad 
\text{for all }i\in I,
\end{equation*}
we conclude that $\bar{x}\in \bigcap_{i\in I}K_{\max }(f_{i})$.
Therefore we conclude that $\bar{x}\in \partial _{\Phi }(K)$ is a common maximizer of all functions $f_{i}$, $i\in I$. \end{proof}

\bigskip

We obtain the following characterization of the Choquet boundary of a
compact metric space.

\begin{corollary}[characterization of the Choquet boundary]\label{cor-char}
	Let $K$ be a compact metric space and $\Phi $ as in Theorem~\ref{Pconvex1}.
	Then
	\begin{equation*}
	\bar{x}\in \partial _{\Phi }(K)\Longleftrightarrow \{\bar{x}\}=\bigcap
	\left\{ K_{\max }(f):\bar{x}\in K_{\max }(f),f\in T\emph{C}^{>}(K)\right\} .
	\end{equation*}
\end{corollary}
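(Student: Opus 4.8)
The plan is to prove the two implications of the equivalence separately, both relying on the machinery already developed. For the forward direction ($\Rightarrow$), suppose $\bar{x}\in\partial_{\Phi}(K)$. I want to show that $\{\bar{x}\}$ equals the intersection of all maximizer sets $K_{\max}(f)$ with $f\in T\mathcal{C}^{>}(K)$ and $\bar{x}\in K_{\max}(f)$. One inclusion is trivial: $\bar{x}$ lies in every such set by definition, so $\bar{x}$ belongs to the intersection. For the reverse inclusion, I need, for each $y\in K$ with $y\neq\bar{x}$, to produce a function $f\in T\mathcal{C}^{>}(K)$ that is maximized at $\bar{x}$ but not at $y$. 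Since $\bar{x}\in\partial_{\Phi}(K)$, by~\eqref{eq:Ch-boundary} the point $\delta^{\Phi}_{\bar{x}}$ is an extreme point of the $w^{*}$-compact convex set $K(\Phi)$; moreover, since $K$ is compact metric, $\mathcal{C}(K)$ and hence $\Phi$ are separable, so $(K(\Phi),w^{*})$ is metrizable. The key point is then to exhibit a $w^{*}$-usc convex function $F$ on $K(\Phi)$ whose unique maximizer on $K(\Phi)$ is $\delta^{\Phi}_{\bar{x}}$; pulling back via $\delta^{\Phi}$ gives $f=F\circ\delta^{\Phi}\in T\mathcal{C}^{>}(K)$ with $K_{\max}(f)=\{\bar{x}\}$ (using Proposition~\ref{max}, which transfers the maximum from $K(\Phi)$ to $K$ and back, together with injectivity of $\delta^{\Phi}$). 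Such an $F$ can be obtained from Lemma~\ref{MBAD} (the variational principle) applied in $\Phi$ to the zero function on the metrizable compact set $K(\Phi)\subset\Phi^{*}$: generically, a linear functional $\phi\in\Phi$ attains its minimum on $K(\Phi)$ at a unique point, i.e.\ $-\phi$ has a unique maximizer, which is thus a $w^{*}$-exposed point of $K(\Phi)$. But this only produces \emph{some} exposed point; to hit $\bar{x}$ specifically I would instead argue as follows: an extreme point $\delta^{\Phi}_{\bar{x}}$ of a metrizable compact convex set need not be exposed, but it is always a \emph{countable intersection of exposed faces} — more directly, there is a $w^{*}$-continuous convex function (even affine-plus-quadratic-type, or a countable sum of exposing functionals) that strongly exposes it. Concretely, since $(K(\Phi),w^*)$ is metrizable, pick a metric $\rho$ compatible with the $w^*$-topology on $K(\Phi)$ and set $F(Q):=-\rho(Q,\delta^{\Phi}_{\bar{x}})$... but this is not convex.

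The cleaner route, and the one I expect to be the crux, uses the fact that for $\bar{x}\in\partial_{\Phi}(K)$, $\delta_{\bar{x}}$ is the \emph{unique} representing measure (Definition~\ref{def_Ch-bd}), combined with Lemma~\ref{key-lemma}: for any $g\in\mathcal{C}(K)$ one has $\widehat{g}(\bar{x})=\sup_{\phi\in\Phi,\phi\le g}\phi(\bar{x})$ and more importantly the Choquet-convex envelope agrees with $g$ at Choquet-boundary points in a strong sense. Rather than chase a single exposing function, I would invoke Lemma~\ref{Pconv}/Theorem~\ref{Pconvex1}-type reasoning in reverse. Actually the most economical argument: take $\phi\in\Phi$ with $\phi(\bar{x})\neq\phi(y)$ (points separated), WLOG $\phi(\bar{x})<\phi(y)$ after replacing $\phi$ by $-\phi$ if needed — wait, we need $\bar{x}$ to be a \emph{maximizer}. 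Since $\bar x$ is extreme in $K(\Phi)$, for the purposes of the intersection it suffices to note that the family of $w^{*}$-continuous convex functions on $K(\Phi)$ separates $\delta^{\Phi}_{\bar{x}}$ from any other point $Q$; indeed, since $\delta^{\Phi}_{\bar{x}}\notin\overline{\mathrm{co}}^{w^{*}}(K(\Phi)\setminus U)$ for any $w^{*}$-neighbourhood $U$ of $\delta^{\Phi}_{\bar{x}}$ (extremality plus a standard separation/Milman argument), one can build, for each $y\neq\bar x$, a $w^{*}$-continuous convex $F_{y}$ with $F_{y}(\delta^{\Phi}_{\bar x})=\max_{K(\Phi)}F_{y}>F_{y}(\delta^{\Phi}_{y})$. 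Then $f_{y}:=F_{y}\circ\delta^{\Phi}\in T\mathcal{C}(K)\subset T\mathcal{C}^{>}(K)$ satisfies $\bar{x}\in K_{\max}(f_{y})$ and $y\notin K_{\max}(f_{y})$, completing the reverse inclusion.

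For the backward direction ($\Leftarrow$), suppose $\{\bar{x}\}=\bigcap\{K_{\max}(f):\bar{x}\in K_{\max}(f),\,f\in T\mathcal{C}^{>}(K)\}$ and assume, for contradiction, that $\bar{x}\notin\partial_{\Phi}(K)$. Then $\mathcal{M}_{\bar{x}}(\Phi)$ contains a probability measure $\mu\neq\delta_{\bar{x}}$. Equivalently $\delta^{\Phi}_{\bar{x}}$ is not extreme in $K(\Phi)$, so $\delta^{\Phi}_{\bar{x}}\in(Q_{1},Q_{2})$ for some distinct $Q_{1},Q_{2}\in K(\Phi)$. Now apply Theorem~\ref{Pconvex1}: the family $\{f_{i}\}$ of \emph{all} functions in $T\mathcal{C}^{>}(K)$ that are maximized at $\bar{x}$ has $\bar{x}$ as a common maximizer, so \eqref{eq:hak} holds, and the theorem supplies a point $\bar{y}\in\partial_{\Phi}(K)$ that is also a common maximizer of every such $f_i$ — hence $\bar{y}$ lies in the intersection. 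Since $\bar{x}\notin\partial_{\Phi}(K)$ we have $\bar{y}\neq\bar{x}$, so the intersection contains at least two points, contradicting the hypothesis that it equals $\{\bar{x}\}$.

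**Main obstacle.** The delicate step is the forward direction: producing, for a \emph{metrizable} compact convex set $K(\Phi)$, enough $w^{*}$-continuous (or at least $w^{*}$-usc) convex functions maximized exactly at a prescribed extreme point $\delta^{\Phi}_{\bar x}$ to separate it from every other point of $K$. Extreme points of metrizable compact convex sets are $G_{\delta}$ but need not be exposed, so one cannot expect a single exposing functional; the resolution is that it suffices to separate from one point at a time (we intersect over the whole family), and this weaker separation follows from extremality via a Milman/Hahn-Banach argument applied to $K(\Phi)\setminus U$. I expect the authors to streamline this either by citing the variational principle Lemma~\ref{MBAD} to get exposed points on an auxiliary slice, or — more likely, given the structure of the paper — by simply invoking the two-point separation afforded by $w^*$-continuous convex functions (perturbations of the exposing functionals from Lemma~\ref{lem-ext}) and noting $T\mathcal{C}(K)\subset T\mathcal{C}^{>}(K)$.
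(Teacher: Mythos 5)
Your backward implication (the intersection being $\{\bar{x}\}$ forces $\bar{x}\in\partial_{\Phi}(K)$) is exactly the paper's argument: apply Theorem~\ref{Pconvex1} to the family of all $f\in T\mathcal{C}^{>}(K)$ maximized at $\bar{x}$. The problem is the other direction, and specifically your final ``cleaner route''. You claim that for an extreme point $\delta^{\Phi}_{\bar{x}}$ of $K(\Phi)$ and any $y\neq\bar{x}$ one can build a $w^{\ast}$-\emph{continuous} convex $F_{y}$ with $F_{y}(\delta^{\Phi}_{\bar{x}})=\max_{K(\Phi)}F_{y}>F_{y}(\delta^{\Phi}_{y})$, via Milman plus Hahn--Banach separation of $\delta^{\Phi}_{\bar{x}}$ from $\overline{\mathrm{co}}^{w^{\ast}}(K(\Phi)\setminus U)$. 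That separation gives a $\phi\in\Phi$ with $\phi(\delta^{\Phi}_{\bar x})>\sup_{K(\Phi)\setminus U}\phi$, but says nothing about $\sup_{K(\Phi)\cap U}\phi$, so $\phi$ need not attain its maximum at $\delta^{\Phi}_{\bar x}$. Worse, the claim itself is false for continuous convex functions: take $K=C=\mathrm{co}(\bar{D}\cup\{v\})\subset\mathbb{R}^{2}$ ($\bar D$ the unit disk, $v=(2,0)$), $\Phi=\mathrm{Aff}(C)$, so $K(\Phi)\cong C$, and let $p$ be a tangent point of the line from $v$ to the circle. Then $p$ is extreme but not exposed, and a directional-derivative computation shows that \emph{every} continuous convex $F$ with $\max_{C}F=F(p)$ must satisfy $F(v)=F(p)$ (if $F(v)<F(p)$, sublinearity of $d\mapsto F'(p;d)$ applied to the tangential direction $u=(v-p)/|v-p|$ and to inward directions $d_{\theta}\to -u$ forces $F'(p;\cdot)$ to be unbounded below on unit feasible directions, contradicting boundedness of $F$). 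So continuous convex-trace functions cannot isolate $\bar x=p$ from $y=v$, and your family $\{f_{y}\}\subset T\mathcal{C}(K)$ does not suffice.

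This is precisely why the statement lives in $T\mathcal{C}^{>}(K)$ and why the paper's proof uses a genuinely discontinuous witness: define $F_{\bar{x}}:K(\Phi)\to\mathbb{R}$ by $F_{\bar{x}}(\delta^{\Phi}_{\bar{x}})=1$ and $F_{\bar{x}}\equiv 0$ elsewhere. This function is upper semicontinuous (its superlevel sets are $\emptyset$, $\{\delta^{\Phi}_{\bar{x}}\}$ or $K(\Phi)$) and it is convex \emph{because} $\delta^{\Phi}_{\bar{x}}$ is an extreme point: a nontrivial convex combination equal to $\delta^{\Phi}_{\bar{x}}$ has both endpoints equal to $\delta^{\Phi}_{\bar{x}}$, and at any other point the inequality $0\leq tF_{\bar x}(Q_{1})+(1-t)F_{\bar x}(Q_{2})$ is automatic. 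Then $f_{\bar{x}}=F_{\bar{x}}\circ\delta^{\Phi}\in T\mathcal{C}^{>}(K)$ has $K_{\max}(f_{\bar{x}})=\{\bar{x}\}$, and the whole intersection collapses to $\{\bar{x}\}$ with a single function --- no separation from individual points $y$, no exposed points, and no variational principle are needed for this direction.
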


\begin{proof}
Let us first assume that
\begin{equation}
\bigcap \left\{ K_{\max }(f):\bar{x}\in K_{\max }(f),f\in T\emph{C}%
^{>}(K)\right\} =\{\bar{x}\}.  \label{eq:hak2}
\end{equation}
Then setting
\begin{equation*}
\mathcal{F}:=\left\{ f\in T\emph{C}^{>}(K):\;\bar{x}\in K_{\max }(f)\right\}
\end{equation*}
we have $\bigcap_{f\in \mathcal{F}}K_{\max }(f)\neq \emptyset$. Therefore,
by Theorem~\ref{Pconvex1}, $\bigcap_{f\in \mathcal{F}}K_{\max }(f)\cap
\partial_{\Phi }(K)\neq \emptyset $ and consequently, $\bar{x}\in \partial_{\Phi }(K)$. \smallskip \newline
Conversely, let $\bar{x}\in \partial _{\Phi }(K)$, that is, $\delta _{\bar{x}%
}\in \mathrm{Ext}(K(\Phi ))$. We define the function $F_{\bar{x}}:(K(\Phi
),w^{\ast })\rightarrow {\mathbb{R}}$ by $F_{\bar{x}}(\delta _{\bar{x}})=1$
and $F_{\bar{x}}(Q)=0$ on $K(\Phi )\setminus \{\delta _{\bar{x}}\}$. Clearly 
$F_{\bar{x}}$ is convex and upper semicontinuous on $(K(\Phi ),w^{\ast })$.
It follows that $f_{\bar{x}}:=F_{\bar{x}}\circ \delta ^{\Phi }\in T\emph{C}^{>}(K)$ and $K_{\max }(f_{\bar{x}})=\{\bar{x}\}$, which readily yields \eqref{eq:hak2}. \end{proof}


\subsubsection{Generic maximum principle.}

In this section we shall establish a generic version of the maximum
principle. Similarly to Subsection~\ref{ssec:4.1}, we consider a compact
metric space $(K,d)$ and we assume that $\Phi $ is a closed subspace of 
$\mathcal{C}(K)$ that separates points in $K$ and contains the constant
functions. In this setting, the Banach space $\mathcal{C}(K)$ (and a
fortiori $\Phi $) is separable and the convex set $K(\Phi )\subset \Phi
^{\ast }$ is $w^{\ast }$-compact metrizable. We shall denote by $d_{\Phi }$
a metric on $K(\Phi )$ compatible with its $w^{\ast }$-topology. Then $(K(\Phi ),d_{\Phi })$ is also a compact metric space.\smallskip 

For any nonempty set $X$ we denote by $\mathbb{R}^{X}$ the space of all
real-valued functions on $X$ and by $\rho _{\infty }$ the (complete) metric;
\begin{equation}
\rho _{\infty }(f,g):=\sup_{x\in X}\frac{|f(x)-g(x)|}{1+|f(x)-g(x)|},\quad\text{for all } f,g\in \mathbb{R}^{X}.  \label{eq:rho}
\end{equation}

In what follows, we set 
\begin{equation*}
\widehat{\Phi }:=\{\widehat{\phi }\in \Phi ^{\ast \ast }:\phi \in \Phi
\},\quad \text{where\  }\widehat{\phi }(Q)=\langle Q,\phi \rangle ,\;\text{
	for all }Q\in \Phi^{\ast}.
\end{equation*}
Recalling terminology from Definition~\ref{def_convex-trace}, and dropping
dependence on $\Phi $ in the notation of convex-trace functions, we have: 
\begin{equation*}
\Phi \subset \Gamma _{\Phi }(K)=T\mathcal{C}(K)\subset T\mathcal{C}%
^{>}(K)\quad \text{and}\quad \text{\ }\widehat{\Phi }\subset \Gamma (K(\Phi
))\subset \Gamma ^{>}(K(\Phi )).
\end{equation*}

We also recall from Remark~\ref{rem-Phi-stable} that a subset $\mathcal{B}
$ of $T\mathcal{C}^{>}(K)$ is called $\Phi $-stable, if $\Phi +\mathcal{B}\subset \mathcal{B}$. Examples of $\Phi $-stable subsets of $T\mathcal{C}^{>}(K)$ are $\Phi$ itself, the class of Choquet convex
functions $\Gamma _{\Phi }(K)=T\mathcal{C}(K)$ and the set of usc
convex-trace functions $T\mathcal{C}^{>}(K)$. \smallskip \newline
 Let us finally notice that Theorem~\ref{Pconvex1} (applied to a
family of one element) yields that if a function $f\in T\mathcal{C}^{>}(K)$
has a unique maximizer, then this maximizer belongs to the Choquet boundary
of~$K$. We are now ready to state the main result of this section. 

\begin{theorem}[genericity of unique maximizer]
	\label{Choquet} Let $(K,d)$ be a compact metric space and $\Phi $ be a
	closed subspace of $C(K)$ which separates the points of $K$. Let $\mathcal{B}
	$ be a $\Phi $-stable subset of $T\mathcal{C}^{>}(K)$. Then, the set 
	\begin{equation*}
	\mathcal{G}:=\left\{ f\in \mathcal{B}:K_{\max }(f)\,\text{is a singleton}\,\right\} 
	\end{equation*}
	is a $G_{\delta }$ dense subset in $(\mathcal{B},\rho _{\infty })$.
\end{theorem}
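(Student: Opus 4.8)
The plan is to transfer the problem to the compact metric space $(K(\Phi),d_\Phi)$, apply the variational principle (Lemma~\ref{MBAD}) there, and then pull the conclusion back to $K$. First I would note that, since $\mathcal{B}\subset T\mathcal{C}^{>}(K)$, each $f\in\mathcal{B}$ is of the form $f=F\circ\delta^{\Phi}$ for some $w^{*}$-usc convex $F$ on $K(\Phi)$; by Proposition~\ref{max}, $\max_K f=\max_{K(\Phi)}F$ and $\delta^{\Phi}(K_{\max}(f))\subset [K(\Phi)]_{\max}(F)$. The key point, exactly as in Lemma~\ref{Pconv} and Theorem~\ref{Pconvex1}, is that by convexity and upper semicontinuity of $F$ one in fact has $[K(\Phi)]_{\max}(F)$ is a $w^{*}$-closed \emph{face} of $K(\Phi)$, so its extreme points are extreme points of $K(\Phi)$, i.e.\ lie in $\delta^{\Phi}(\partial_\Phi(K))$; hence $K_{\max}(f)$ is a singleton as soon as $[K(\Phi)]_{\max}(F)$ contains a unique extreme point, and more strongly, $\delta^{\Phi}$ is injective so $K_{\max}(f)$ is a singleton iff $[K(\Phi)]_{\max}(F)$ is. This reduces matters to showing genericity of unique maximizers for the family $\widehat{\mathcal{B}}:=\{F:\Phi\text{-usc convex on }K(\Phi),\ F\circ\delta^{\Phi}\in\mathcal{B}\}$ on $K(\Phi)$.

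Next I would set up the Baire-category argument directly on $(\mathcal{B},\rho_\infty)$. The metric $\rho_\infty$ is complete on $\mathbb{R}^{K}$, and I claim $(\mathcal{B},\rho_\infty)$ is a closed (hence complete) subspace: a $\rho_\infty$-limit of usc convex-trace functions that stays pointwise bounded is again usc convex-trace (take the $F$'s on $K(\Phi)$, which converge uniformly on the $w^{*}$-metrizable compact $K(\Phi)$, so the limit is $w^{*}$-usc — actually $w^{*}$-continuous — convex), and $\Phi$-stability is preserved under $\rho_\infty$-limits; if $\mathcal{B}$ is not itself $\rho_\infty$-closed one replaces it by its closure, which is still $\Phi$-stable and contained in $T\mathcal{C}^{>}(K)$, proving the slightly stronger statement. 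For $n\in\mathbb{N}$ let
\[
\mathcal{G}_n:=\Big\{f\in\mathcal{B}:\ \exists\,\bar x\in K,\ \text{diam}\big(\{x\in K: f(x)>\max_K f-\tfrac1n\}\big)<\tfrac1n\Big\},
\]
so that $\mathcal{G}=\bigcap_n\mathcal{G}_n$ (using that $f$ is usc, so its maximizer set is the intersection of the closed superlevel sets and a decreasing intersection of compacts of diameter $\to 0$ is a singleton). Each $\mathcal{G}_n$ is $\rho_\infty$-open in $\mathcal{B}$: a small $\rho_\infty$-perturbation changes $\max_K f$ and the superlevel sets only slightly, and upper semicontinuity makes the ``almost-maximizer set'' upper semicontinuous in $f$, so the diameter condition is stable. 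This openness step is routine but needs care with how $\rho_\infty$ controls sup-differences; it is cleanly handled by working with the uniformly-convergent $F$'s on the compact $K(\Phi)$.

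The heart is density of each $\mathcal{G}_n$. Fix $f=F\circ\delta^{\Phi}\in\mathcal{B}$ and $\varepsilon>0$. I would apply Lemma~\ref{MBAD} on the compact metric space $(K(\Phi),d_\Phi)$ with the Banach space $\widehat{\Phi}\cong\Phi$ (the norm-comparison hypothesis $\alpha\|\phi\|_\Phi\ge\|\widehat\phi\|_{\infty,K(\Phi)}$ holds since $\|\widehat\phi\|_{\infty,K(\Phi)}=\sup_{Q\in K(\Phi)}|\langle Q,\phi\rangle|\le\|\phi\|_\Phi$ as $K(\Phi)$ lies in the unit ball of $\Phi^{*}$, and $\widehat\Phi$ separates points of $K(\Phi)$ because $\Phi$ separates points of $K(\Phi)$ by definition of the weak$^{*}$ topology), to the proper lsc function $-F$. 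Lemma~\ref{MBAD} gives that $\{\widehat\phi\in\widehat\Phi: [K(\Phi)]_{\min}(-F-\widehat\phi)=[K(\Phi)]_{\max}(F+\widehat\phi)\text{ is not a singleton}\}$ is first category in $\widehat\Phi$, hence one can pick $\phi\in\Phi$ with $\|\phi\|_\infty$, and so $\rho_\infty(\phi,0)$, as small as we like such that $F+\widehat\phi$ has a unique maximizer on $K(\Phi)$. Then $g:=f+\phi=(F+\widehat\phi)\circ\delta^{\Phi}$ lies in $\mathcal{B}$ by $\Phi$-stability, $\rho_\infty(g,f)$ is small (estimate $\rho_\infty(f+\phi,f)$ by $\|\phi\|_\infty/(1+\cdot)$, monotone in $\|\phi\|_\infty$), and since $[K(\Phi)]_{\max}(F+\widehat\phi)$ is a singleton and $\delta^{\Phi}$ is injective onto $\delta^{\Phi}(K)\subset K(\Phi)$, $K_{\max}(g)$ is a singleton, so $g\in\mathcal{G}\subset\mathcal{G}_n$. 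Hence $\mathcal{G}_n$ is dense, and by the Baire category theorem in the complete metric space $(\mathcal{B},\rho_\infty)$ (or its $\rho_\infty$-closure), $\mathcal{G}=\bigcap_n\mathcal{G}_n$ is a dense $G_\delta$.

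The main obstacle I anticipate is the density step's bookkeeping: verifying precisely that the unique maximizer of $F+\widehat\phi$ on all of $K(\Phi)$, which a priori could be an extreme/boundary point \emph{not} in $\delta^{\Phi}(K)$ — but here it is, since $\delta^{\Phi}(K)\supset\mathrm{Ext}(K(\Phi))$ and a unique maximizer of a usc convex function is extreme — indeed lies in $\delta^{\Phi}(K)$, so that $K_{\max}(g)$ is genuinely a singleton and not empty; and checking that $\rho_\infty$-smallness of $\phi$ is implied by sup-norm smallness of $\phi$, which is where the comparison between the $\Phi$-norm topology (in which Lemma~\ref{MBAD} gives genericity) and $\rho_\infty$ must be reconciled — it suffices that $\|\cdot\|_\Phi$-convergence to $0$ implies $\|\cdot\|_\infty$-convergence to $0$ implies $\rho_\infty$-convergence to $0$, which all hold. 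The openness of $\mathcal{G}_n$ and completeness of $(\mathcal{B},\rho_\infty)$ are the other routine-but-necessary verifications.
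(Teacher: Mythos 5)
Your proposal follows essentially the same route as the paper: represent $f=F\circ\delta^{\Phi}$, apply the variational principle (Lemma~\ref{MBAD}) on the compact metric space $(K(\Phi),d_\Phi)$ with the separating space $\widehat{\Phi}$ to perturb $F$ by a uniformly small $\widehat{\phi}$ so that $F+\widehat{\phi}$ has a unique maximizer, observe that this maximizer is an extreme point of $K(\Phi)$ and hence lies in $\delta^{\Phi}(K)$, and use $\Phi$-stability of $\mathcal{B}$ to conclude that $g=f+\phi\in\mathcal{G}$ is $\rho_\infty$-close to $f$. That density argument is correct and is exactly the paper's.

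There is, however, one concrete defect: your witness for the $G_\delta$ property fails. The sets
$\mathcal{G}_n=\{f\in\mathcal{B}:\mathrm{diam}(\{x:f(x)>\max_K f-\tfrac1n\})<\tfrac1n\}$
are \emph{not} $\rho_\infty$-open in general, because the strict superlevel set $\{f>\max f-\tfrac1n\}$ is not stable under uniform perturbation: it can jump up to the (possibly much larger) closed superlevel set $\{f\geq\max f-\tfrac1n\}$. For instance, on $K=[0,1]$ with $\Phi=\mathrm{Aff}([0,1])$, take $f(x)=\max\{0,2(x-\tfrac34)\}$ and $g_\varepsilon(x)=\max\{\varepsilon(1-x),2(x-\tfrac34)\}$: both are convex, $\|g_\varepsilon-f\|_\infty\leq\varepsilon$, $\{f>\max f-\tfrac12\}=(\tfrac34,1]$ has diameter $\tfrac14$, yet $\{g_\varepsilon>\max g_\varepsilon-\tfrac12\}=[0,1]$. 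The repair is the formulation the paper uses, namely
$\mathcal{U}_n=\{f\in\mathcal{B}:\exists x_n\in K,\ f(x_n)>\sup_{d(x,x_n)\geq 1/n}f(x)\}$,
where the strict gap between two attained values (the sup is over a compact set and $f$ is usc) is stable under small uniform perturbations; one checks $\mathcal{G}=\bigcap_n\mathcal{U}_n$. Two further small remarks: your appeal to the Baire category theorem (and hence to completeness/closedness of $(\mathcal{B},\rho_\infty)$) is unnecessary, since your perturbation lands directly in $\mathcal{G}$ and so establishes density of $\mathcal{G}$ itself; and your parenthetical claim that the associated $F_n$ converge uniformly on all of $K(\Phi)$ is unjustified (they only converge on $\delta^{\Phi}(K)$), but this is harmless once Baire is dropped.
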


\begin{proof}
Let us denote by $\rho _{\infty }$ the metric of uniform convergence on both 
$\mathbb{R}^{K(\Phi )}$ and $\mathbb{R}^{K}.$ Since uniform limits maintain
upper semicontinuity and convexity, it follows that the metric space $(\Gamma ^{>}(K(\Phi )),\rho _{\infty })$ is complete (as a closed subspace
of $(\mathbb{R}^{K(\Phi )},\rho _{\infty })$). A standard argument now shows
that $(T\mathcal{C}^{>}(K),\rho _{\infty })$ is closed in $(\mathbb{R}^{K},\rho _{\infty })$ and consequently, it is also complete. \smallskip 

We now consider the following canonical map $\mathcal{S}:(\Gamma ^{>}(K(\Phi )),\rho _{\infty }) \longrightarrow (T\mathcal{C}^{>}(K),\rho _{\infty })$ defined by $\mathcal{S}(\widehat{F}) =\widehat{F}\circ \delta ^{\Phi}$.
It is easily seen that $\mathcal{S}$ is surjective and $1$-Lipschitz. Moreover, it
is easy to see that there exists $\mathcal{A}\subset \Gamma^{>}(K(\Phi ))$ such that $\mathcal{B}=\mathcal{S}(\mathcal{A})$ and 
$\widehat{\Phi}+\mathcal{A}\subset \mathcal{A}$ (that is, $\mathcal{A}$ is a 
$\widehat{\Phi }$-stable subset of $\Gamma^{>}(K(\Phi))$).\smallskip 

\textit{Claim.} The set $\mathcal{D}:=\left\{ \widehat{F}\in \mathcal{A}:\widehat{F}\,\text{has unique maximum on }K(\Phi) \right \}$
is $G_{\delta }$ dense in $(\mathcal{A},\rho _{\infty })$. \medskip 

\noindent\textit{Proof of the Claim.} For $n\geq 1$, we set:
\begin{equation*}
\widehat{\mathcal{U}}_n:=\bigg \{\widehat{F}\in \mathcal{A};\;\exists
Q_{n}\in K(\Phi )\text{ \ with }\widehat{F}(Q_{n})>\underset{Q\in K(\Phi
	):\,d_{\Phi }(Q,Q_{n})\geq \frac{1}{n}}{\sup }\widehat{F}(Q)\bigg \}.
\end{equation*}
It is easy to see that $\widehat{\mathcal{U}}_n$ is an open subset of $(\mathcal{A},\rho _{\infty })$ for all $n\geq 1$, and $\mathcal{D}:=\bigcap_{n\geq 1}\,\widehat{\mathcal{U}}_n.$
Thanks to Lemma~\ref{MBAD} (applied to the compact metric space $(K(\Phi
),d_{\Phi })$ and the subspace $(\widehat{\Phi },\Vert \cdot \Vert _{\infty
})$ of $\mathcal{C}(K(\Phi ))$), for every $0<\varepsilon <1$ and $\widehat{F}\in \mathcal{A}$, there exists a function $\phi \in \Phi $ such that $\rho
_{\infty }(\widehat{\phi },0)<\varepsilon $ and $-\widehat{F}-\widehat{\phi}
$ attains a unique minimum on $K(\Phi ).$ Let us denote by $Q_{0}\in K(\Phi)$ this unique minimizer. Then we deduce that $\widehat{G}:=\widehat{F}+
\widehat{\phi }\in \bigcap_{n\geq 1}\widehat{\mathcal{U}}_n$ (we
take $Q_{n}=Q_{0},$ for all $n\geq 1$) and $\rho _{\infty }(\widehat{G},
\widehat{F})<\varepsilon $. Thus the $G_{\delta }$-set $\mathcal{D}$ is
dense in $(\mathcal{A},\rho_{\infty })\ $and the claim follows.\medskip 

From the classical Bauer theorem, the unique maximizer of every usc convex
function $\widehat{F}\in \mathcal{D}\subset \Gamma ^{>}\mathrm{\,}(K(\Phi))$
is necessarily an extreme point of $K(\Phi )$. Then by Proposition~\ref{max}
we deduce that for every $\widehat{F}\in \mathcal{D}$, the usc convex-trace
function $\mathcal{S}(\widehat{F})=\widehat{F}\circ \delta ^{\Phi }$ has a
unique maximum on $K,$ which is necessarily attained at a point in the
Choquet boundary $\partial _{\Phi }(K)$. Since 
\begin{equation*}
\mathcal{D}\subset \left\{ \widehat{F}\in \mathcal{A}:K_{\max }\left( 
\mathcal{S}(\widehat{F})\right) \mathnormal{\ }\text{singleton}\right\} ,
\end{equation*}
we obtain readily that $\mathcal{S}(\mathcal{D})\subset \mathcal{G}$. Since 
$\mathcal{S}$ is a Lipschitz surjective map and $\mathcal{D}$ is dense in $(\mathcal{A},\rho _{\infty }),$ we deduce that $\mathcal{G}$ is dense in 
$(\mathcal{B},\rho _{\infty })$. Moreover $\mathcal{G}$ is a $G_{\delta }$
subset of $(T\mathcal{C}^{>}(K),\rho _{\infty })$ since it can be written as 
$\mathcal{G}=\bigcap_{n\geq 1}\mathcal{U}_{n}$, where
\begin{equation*}
\mathcal{U}_{n}:=\bigg \{f\in \mathcal{B};\;\exists x_{n}\in K\;f(x_{n})\,>\,
\underset{x\in K:\,d(x,x_{n})\geq \frac{1}{n}}{\sup }\,f(x)\bigg \},
\end{equation*}
is open in $(\mathcal{B},\rho _{\infty })$ for each $n\geq 1$. This
completes the proof. \end{proof}

\bigskip 

It follows from the above result, by taking $\mathcal{B}=T\mathcal{C}^{>}(K)$,
that a generic upper semicontinuous convex-trace function on $K$ attains a unique maximum (necessarily at a point of the Choquet boundary). By taking $\mathcal{B}= \Gamma_{\Phi }(K)$, we obtain the same conclusion for a generic Choquet-convex function. Both results are new and together with Theorem~\ref{Pconvex1} provide generalized version of the classical Bauer maximum principle. 

\section{Examples.}
\label{sec:ex}

In this section we provide three examples-schemes to illustrate this theory. In the first example (Subsection~\ref{ssec:5.1}) we show how the Choquet boundary of the closed interval $[0,1]$ may change depending on the choice of $\Phi$. In particular, every closed subset of $[0,1]$ that contains the extreme points $\{ 0,1 \}$ can be identified to the Choquet boundary of $[0,1]$ under an adequate choice of the space $\Phi\subset\mathcal{C}([0,1])$. In the second example (Subsection~\ref{ssec:5.2}) we deal with the unit disk of the complex plane. Then the class of convex-trace functions consists of the subharmonic functions, if $\Phi$ is taken to be the harmonic functions. We describe the convex-traces subsets of the disk using Runge's approximation theorem as well as the maximum principle for harmonic functions. Finally, in Subsection~\ref{ssec:5.3} we illustrate trace-convexity for subsets of the set of natural numbers $\mathbb{N}$ (with its discrete topology).

\subsection{Choquet boundaries of $[0,1]$.}\label{ssec:5.1}

Let us first notice that in view of Definition~\ref{phi_extrem} and relation~\eqref{eq:Ch-boundary}, the set $\mathrm{Ext}_{\Phi}(K)$ of $\Phi$-extreme points of $K$ coincides with the $\Phi$-Choquet boundary of $K$ (see also the proof of Corollary~\ref{cor-ch-bd}). \smallskip

We shall now describe the $\Phi$-Choquet boundary of the closed interval $K=[0,1]$, under various choices of closed subspaces $\Phi \subset \mathcal{C}([0,1])$.
\smallskip

(i). Let us first consider the case $\Phi=\mathrm{Aff}([0,1])$. In this case we have (cf. Remark~\ref{rem-compat-1}) 
$$\partial_{\Phi}([0,1])=\partial ([0,1])=\{0,1\} \quad \text{and} \quad \Gamma_{\Phi}([0,1])=\Gamma ([0,1]),$$ that is, the Choquet boundary coincides with the usual boundary and the class of Choquet functions coincides with the class of convex continuous functions on~$[0,1]$.
	\smallskip
		
(ii). Let $\Delta \subset \lbrack 0,1]$ be the usual Cantor set. Then $[0,1]\setminus \Delta =\bigcup_{n\geq 1}(a_{n},b_{n})$ with 
$\{a_{n}\}_{n}$, $\{b_{n}\}_{n}\subset \Delta $. Then defining 
	\begin{equation*}
	\Phi _{\Delta }:=\left\{ \phi \in \mathcal{C}([0,1]:\;\phi |_{[a_{n},b_{n}]} \text{ affine, for all }n\geq 1\right\} ,
	\end{equation*}
	we obtain that the Cantor set $\Delta$ becomes the $\Phi _{\Delta}$-Choquet boundary of $[0,1],$ that is, 
	\begin{equation*}
	\partial _{\Phi _{\Delta }}([0,1])=\Delta .
	\end{equation*}
Indeed, if $x\in \lbrack 0,1]\setminus \Delta ,$ then there exists 
$n_{0}\geq 1$ such that $x\in (a_{n_{0}},b_{n_{0}})$ and consequently 
$x=ta_{n_{0}}+(1-t)b_{n_{0}},$ for some $t\in (0,1)$. We set 
\begin{equation*}
\mu :=t\delta _{a_{n_{0}}}+(1-t)\delta _{b_{n_{0}}}\in \mathcal{M}^{1}([0,1]).
\end{equation*}
	Since every $\phi \in \Phi _{\Delta }$ is affine on $[a_{n},b_{n}]$ we obtain
	\begin{equation*}
	\langle \mu .\phi \rangle =t\phi (a_{n_{0}})+(1-t)\phi (b_{n_{0}})=\phi
	(x)=\langle \delta _{x}.\phi \rangle ,
	\end{equation*}
yielding $\mu \sim \delta_{x}$ and consequently $x\notin \partial_{\Phi_{\Delta }}([0,1])$. On the other hand, if $x\in \Delta$, then $x$ is the unique maximum of the function $\phi\in\Phi_{\Delta}$ defined by $\phi (t)=-|t-x|,$ $t\in \lbrack 0,1]$. Therefore, $x\in \partial_{\Phi_{\Delta }}([0,1])$.\medskip

\noindent\textit{Remark.} The above proof works in the same way for any closed subset $F$ of $[0,1]$ that contains the extreme points $\{0,1\}$. Therefore, any such closed subset is the $\Phi$-Choquet boundary of $[0,1]$ under an adequate choice of $\Phi$.\medskip

(iii). Let us now take $\Phi =\mathcal{C}[0,1]$. Then every point of $[0,1]$ belongs to the Choquet-boundary (i.e. $\partial_{\Phi}([0,1])=[0,1]$) and every continuous function is $\Phi$-Choquet convex.

\subsection{Harmonic functions on the disk.}
\label{ssec:5.2}

In this subsection we shall deal with harmonic functions on the unit disk. For prerequisites in complex analysis, as well as for notions and results that will be evoked in this section we refer the reader to~\cite{GreenKrantz}.\smallskip 

Let $\mathbb{D}:=\{z\in \mathbb{C}:\;|z|<1\}$ denote the open disk of the
complex plane and $K=\mathbb{\bar{D}}$ its closure. We set:
\begin{equation*}
\Phi =\{u:\mathbb{\bar{D}}\rightarrow \mathbb{R}\,\;\text{continuous, }u|_{\mathbb{D}}\;\text{harmonic}\}.
\end{equation*}
In this case, the class of Choquet-convex functions 
$\Gamma_{\Phi}(K)=T\mathcal{C}(K)$ coincides with the continuous subharmonic functions on $\mathbb{D}$, while $T\mathcal{C}^{>}(K)$ corresponds to the class of upper semicontinuous subharmonique functions.\smallskip 

Let us now recall that subharmonique functions satisfy the \emph{maximum principle}, that is, the maximum of any subharmonic function $u$ over any
set is attained at the boundary of the set. Combining this with Corollary~\ref{cor-char} we deduce that the Choquet boundary 
$\partial _{\Phi }\mathbb{\bar{D}}$ of $\mathbb{\bar{D}}$ is contained in $\partial \mathbb{\bar{D}}:=\{z\in \mathbb{C}:|z|=1\}.$ On the other hand, any $\bar{z}\in\partial \mathbb{D}$ is the unique maximizer of some $\phi \in \Phi $ (to see this, it suffices to take any $h\in \mathcal{C}(\partial \mathbb{\bar{D}})$ with strict maximum at $\bar{z}$ and obtain $\phi$ as the unique solution of the Laplace equation $\Delta u=0$ on $\mathbb{D}$ with $u|_{\partial \mathbb{\bar{D}}}=h$. Therefore, in view of Theorem~\ref{Pconvex1} (applied to one function) we deduce:
\begin{equation*}
\partial _{\Phi }\mathbb{\bar{D}}=\partial \mathbb{D}=\{z\in \mathbb{C}:\,|z|=1\}.
\end{equation*}

We shall now describe the convex-trace subsets of $\mathbb{D}$. We shall need the following lemma.

\begin{lemma}
	\label{lem-Ubar}Let $\mathcal{U}$ be a nonempty open simply connected subset of $\mathbb{D}$ such that $\mathcal{\bar{U}\subset }\mathbb{D}$. Then the (closed) set $\mathcal{\bar{U}}$ is convex-trace in $\mathbb{D}$.
\end{lemma}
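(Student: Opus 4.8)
The plan is to produce, for the given open set $\mathcal{U}$, a family of functions in $\Phi$ (harmonic on $\mathbb{D}$, continuous on $\bar{\mathbb{D}}$) whose common sublevel set $\{z\in\mathbb{D}:\,\phi_i(z)\le\lambda_i\}$ equals $\bar{\mathcal{U}}$, and then invoke the characterization of convex-trace sets, Proposition~\ref{prop_TC-set}~(ii)$\Rightarrow$(i). Since $\bar{\mathcal{U}}\subset\mathbb{D}$ is compact and $\mathbb{D}\setminus\bar{\mathcal{U}}$ is open, it suffices to separate $\bar{\mathcal{U}}$ from each point $w\in\mathbb{D}\setminus\bar{\mathcal{U}}$ by a harmonic function, i.e. to find $\phi\in\Phi$ with $\sup_{z\in\bar{\mathcal{U}}}\phi(z)<\phi(w)$; indeed, by Corollary~\ref{Cor_separation} this separation property is equivalent to $\bar{\mathcal{U}}\in\mathcal{P}_{TC}(\mathbb{D})$. (One must also check that the separating functions can be chosen uniformly bounded on $\bar{\mathbb{D}}$ — this will come for free from a normalization, since the relevant functions will be bounded near the compact set $\bar{\mathcal{U}}$ and on the compact $\bar{\mathbb{D}}$.)

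The key step is the construction of the separating harmonic function, and this is where the hypotheses ``simply connected'' and ``$\bar{\mathcal{U}}\subset\mathbb{D}$'' enter, through Runge's approximation theorem, which is flagged in the paper's overview of this example. Fix $w\in\mathbb{D}\setminus\bar{\mathcal{U}}$. Because $\mathcal{U}$ is simply connected, the complement $\hat{\mathbb{C}}\setminus\bar{\mathcal{U}}$ (in the Riemann sphere) is connected, so $w$ lies in the unbounded component of $\mathbb{C}\setminus\bar{\mathcal{U}}$; equivalently, $\bar{\mathcal{U}}$ has connected complement in $\hat{\mathbb{C}}$. Hence Runge's theorem applies on the compact set $\bar{\mathcal{U}}$: the function $z\mapsto \tfrac{1}{z-w}$, holomorphic on a neighbourhood of $\bar{\mathcal{U}}$ (note $w\notin\bar{\mathcal{U}}$), can be uniformly approximated on $\bar{\mathcal{U}}$ by polynomials. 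The standard device is then to consider the Cauchy-type integral / partial fractions: one shows there is a rational function $R$ with a single pole at $w$, or directly a polynomial $p$, such that $\operatorname{Re}$ of it is large at $w$ and small on $\bar{\mathcal{U}}$. Concretely: take a polynomial $p$ with $\bigl|p(z)-\tfrac{1}{z-w}\bigr|$ small on $\bar{\mathcal{U}}$; then $\operatorname{Re} p$ is harmonic on all of $\mathbb{C}$ (in particular on $\mathbb{D}$) and continuous on $\bar{\mathbb{D}}$, so $\operatorname{Re} p\in\Phi$, while $\operatorname{Re} p$ stays bounded on $\bar{\mathcal{U}}$ independently of how good the approximation is — this alone does not yet make $\operatorname{Re} p(w)$ large. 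To force the gap one instead approximates and then iterates: pick $p$ with $\|\,p - \tfrac{1}{z-w}\,\|_{\infty,\bar{\mathcal{U}}}<\varepsilon$ for small $\varepsilon$, translate $w$ slightly toward $\bar{\mathcal{U}}$ to make $\tfrac{1}{z-w}$ genuinely unbounded near a point of $\partial\mathcal{U}$ while remaining integrable — or, more cleanly, approximate $\tfrac{1}{(z-w)^N}$ for large $N$, whose supremum over $\bar{\mathcal{U}}$ is $\le (\operatorname{dist}(w,\bar{\mathcal{U}}))^{-N}$ but whose value at points very close to $w$ blows up; evaluating the approximating polynomial $p_N$ at $w$ itself and comparing with its sup over $\bar{\mathcal{U}}$ yields, after taking real parts and normalizing, a function $\phi_N\in\Phi$ with $\sup_{\bar{\mathcal{U}}}\phi_N<\phi_N(w)$.

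After establishing separation for every $w\in\mathbb{D}\setminus\bar{\mathcal{U}}$, I would assemble the family: for each $w$ let $\phi_w\in\Phi$ satisfy $\lambda_w:=\sup_{z\in\bar{\mathcal{U}}}\phi_w(z)<\phi_w(w)$, and after dividing by $\|\phi_w\|_\infty$ (the sup over $\bar{\mathbb{D}}$) assume $\|\phi_w\|_\infty=1$, so the family is uniformly bounded on $K=\bar{\mathbb{D}}$ and $\{\lambda_w\}\subset[-1,1]$. Then
\[
\bigcap_{w\in\mathbb{D}\setminus\bar{\mathcal{U}}}\{z\in\mathbb{D}:\,\phi_w(z)\le\lambda_w\}\;=\;\bar{\mathcal{U}},
\]
since ``$\supseteq$'' is immediate from the definition of $\lambda_w$ and ``$\subseteq$'' holds because any $z\notin\bar{\mathcal{U}}$ fails the $w=z$ inequality. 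By Proposition~\ref{prop_TC-set}, (ii)$\Rightarrow$(i), $\bar{\mathcal{U}}\in\mathcal{P}_{TC}(\mathbb{D})$, i.e. $\bar{\mathcal{U}}$ is convex-trace in $\mathbb{D}$, which is the assertion.

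\textbf{Main obstacle.} The delicate point is getting a \emph{strict} separation with uniform control: Runge approximation on a compact set only controls the sup over that compact set, so one must be careful to make the value at the exterior point $w$ strictly dominate — either by approximating a high power $\tfrac{1}{(z-w)^N}$ and exploiting the blow-up of its modulus near $w$, or by a limiting/rescaling argument — and then verify that after taking real parts one still lands inside $\Phi$ and that the normalization keeps the whole family uniformly bounded on $\bar{\mathbb{D}}$ as required by Proposition~\ref{prop_TC-set}(ii). The role of simple connectivity of $\mathcal{U}$ is precisely to guarantee $\hat{\mathbb{C}}\setminus\bar{\mathcal{U}}$ is connected so that Runge's theorem yields polynomial (rather than merely rational-with-poles-in-the-holes) approximants; if one is content with rational approximants the hypothesis can be relaxed, but as stated simple connectivity is exactly what makes the clean polynomial version go through.
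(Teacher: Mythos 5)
Your overall strategy --- separate each exterior point from $\bar{\mathcal{U}}$ by a function of $\Phi$ obtained via Runge's theorem, then conclude through Proposition~\ref{prop_TC-set}(ii) or Corollary~\ref{Cor_separation} --- is the same as the paper's. But the core construction, which you yourself flag as the ``main obstacle'', does not work as written, and this is a genuine gap rather than a detail. You apply Runge's theorem to approximate $1/(z-w)$, or $1/(z-w)^N$, on the compact set $\bar{\mathcal{U}}$ \emph{only}. Uniform approximation on $\bar{\mathcal{U}}$ gives no information whatsoever about the value of the approximating polynomial $p_N$ at the exterior point $w$, which lies outside $\bar{\mathcal{U}}$; there is therefore no reason why $\mathrm{Re}\,p_N(w)$ should dominate $\sup_{\bar{\mathcal{U}}}\mathrm{Re}\,p_N$. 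The ``blow-up of $1/(z-w)^N$ near $w$'' is irrelevant, because the approximation is never performed near $w$, and no iteration or rescaling of this scheme can create the strict gap you need.

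The missing idea --- and what the paper actually does --- is to enlarge the compact set on which Runge's theorem is applied so that it \emph{contains} the exterior point: choose a small closed ball $\bar{B}(\bar{x},r)\subset\mathbb{D}$ disjoint from $\bar{\mathcal{U}}$, note that $\bar{B}(\bar{x},r)\cup\bar{\mathcal{U}}$ is compact with connected complement in the sphere, and approximate on it (to within $1/3$, by a polynomial $f$) the locally constant function equal to $1$ on a neighbourhood of $\bar{B}(\bar{x},r)$ and $0$ on a neighbourhood of $\bar{\mathcal{U}}$, which is holomorphic on the union of two disjoint open sets. Then $u=\mathrm{Re}(f)\in\Phi$ satisfies $\sup_{\bar{\mathcal{U}}}u<1/3<2/3<u(\bar{x})$, and your normalization and final intersection argument go through verbatim. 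Two further remarks. First, your route via Proposition~\ref{prop_TC-set}(ii) only needs separation at points of $\mathbb{D}\setminus\bar{\mathcal{U}}$, whereas the paper, arguing through Corollary~\ref{Cor_separation}, must also separate points $\bar{x}\in\partial\mathbb{D}$ (it does so by solving a Dirichlet problem whose solution peaks at $\bar{x}$); if you really want to invoke Corollary~\ref{Cor_separation} you need that extra case. Second, the step ``$\mathcal{U}$ simply connected $\Rightarrow\hat{\mathbb{C}}\setminus\bar{\mathcal{U}}$ connected'' is where simple connectivity enters, and it is the \emph{closure} whose complement must be connected for polynomial approximation; this deserves a careful statement rather than an ``equivalently''.
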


\begin{proof} We shall use the characterization of trace-convexity given in
Corollary~\ref{Cor_separation}. To this end, let 
$\bar{x}\in \mathbb{\bar{D}}\diagdown \mathcal{\bar{U}}$. We distinguish two cases.\smallskip 

\textit{Case 1}. $\bar{x}\in \partial \mathbb{D}$. \smallskip 

In this case, using the same standard argument that we evoked before, we deduce that $\bar{x}$ is
the unique maximizer of some $\phi \in \Phi $ and consequently $\sup_{x\in 
	\mathcal{\bar{U}}}\,\phi (x)<\phi (\bar{x})$. 

\medskip 

\textit{Case 2}. $\bar{x}\in \mathbb{D\diagdown}\mathcal{\bar{U}}$.\smallskip 

In this case, there exists $r>0$ such that $\bar{B}(\bar{x},r)\subset 
\mathbb{D}$ and $\bar{B}(\bar{x},r)\cap \mathcal{\bar{U}}=\emptyset$. Then 
$\bar{B}(\bar{x},r)\cup \mathcal{\bar{U}}$ is a compact simply connected
subset of $\mathbb{D}$. Let $\mathcal{V}_{1},\mathcal{V}_{2}$ be disjoint
open simply connected subsets of $\mathbb{D}$ such that $\bar{B}(\bar{x},r)\subset \mathcal{V}_{1}$ and $\mathcal{\bar{U}}\subset \mathcal{V}_{2}$.
Then the function
\begin{equation*}
\varphi (z)=\left\{ 
\begin{array}{cc}
0, & \text{if }z\in \mathcal{V}_{1} \\ 
1, & \text{if }z\in \mathcal{V}_{2}\end{array}
\right. 
\end{equation*}
is (trivially) holomorphic on $\mathcal{V}_{1}\cup \mathcal{V}_{2}$.
Applying Runge's approximation theorem we deduce that there exists a complex polynomial $f(z)$ such that 
\begin{equation*}
||f-\varphi ||_{\infty }:=\sup_{z\in \bar{B}(\bar{x},r)\cup \mathcal{\bar{U}}
}|f(z)-\varphi (z)|<\frac{1}{3}.
\end{equation*}
Taking $u=\mathrm{Re}(f)$ we deduce easily that $u\in \Phi $ (harmonic) and 
$\rho _{\infty }(u,\varphi )<1/3.$ It follows that $\sup_{x\in \mathcal{\bar{U
}}}\,u(x)<1/3$ and $u(\bar{x})>2/3$. Therefore (\ref{eq:sep}) holds and 
$\mathcal{\bar{U}}\in \mathcal{P}_{TC}(\mathbb{D}).$ \end{proof}

\bigskip 
We shall now provide a nice description of convex-trace sets of $\mathbb{D}$. 

\begin{proposition}[convex-trace sets of the complex disk]
	Let\ $C$ be a nonempty, compact subset of $\mathbb{D}$ with finitely many
	connected components. Then 
	\begin{equation*}
	C\in \mathcal{P}_{TC}(X)\quad \Longleftrightarrow \quad C\text{ is simply
		connected}
	\end{equation*}
\end{proposition}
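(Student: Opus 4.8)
The plan is to prove both implications using the tools already developed, principally Corollary~\ref{Cor_separation} (the separation characterization of convex-trace sets), Lemma~\ref{lem-Ubar}, Runge's approximation theorem, and the maximum principle for harmonic functions. For the forward implication ($C\in\mathcal{P}_{TC}(\mathbb{D})\Rightarrow C$ simply connected) I would argue by contraposition: suppose $C$ is compact in $\mathbb{D}$ with finitely many components but is \emph{not} simply connected. Then some bounded component $W$ of $\mathbb{C}\setminus C$ is entirely contained in $\mathbb{D}$ (a ``hole'' of $C$ that does not reach the boundary of the disk); pick $\bar{x}\in W$. The key point is that for any harmonic $u\in\Phi$, since $W\subset\mathbb{D}$ is a bounded open set whose topological boundary $\partial W$ is contained in $C$ (as $C$ is closed and $W$ is a component of the complement), the maximum principle for harmonic functions gives $u(\bar{x})\le\max_{\partial W}u\le\sup_{x\in C}u(x)$. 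Hence no $\phi\in\Phi$ can strictly separate $\bar{x}$ from $C$ in the sense of~\eqref{eq:sep}, so by Corollary~\ref{Cor_separation} we get $C\notin\mathcal{P}_{TC}(\mathbb{D})$. The mild subtlety here is verifying that a non-simply-connected compact $C$ really has a bounded complementary component inside $\mathbb{D}$: for a compact subset of the plane, ``simply connected'' is equivalent to ``$\mathbb{C}\setminus C$ connected'' (no bounded complementary components at all), and any bounded component $W$ of $\mathbb{C}\setminus C$ satisfies $\partial W\subset C\subset\mathbb{D}$, whence $W\subset\mathbb{D}$ as well since $W$ is connected, avoids $\partial\mathbb{D}$ only if\,\dots\ — more carefully, $W$ bounded and $\overline{W}\cap\partial\mathbb{D}=\emptyset$ because $\partial\mathbb{D}\subset\mathbb{C}\setminus C$ lies in the unbounded component; thus $\overline{W}\subset\mathbb{D}$, so the maximum principle applies.

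For the reverse implication ($C$ simply connected $\Rightarrow C\in\mathcal{P}_{TC}(\mathbb{D})$) I would adapt the proof of Lemma~\ref{lem-Ubar} almost verbatim, again using Corollary~\ref{Cor_separation}. Fix $\bar{x}\in\overline{\mathbb{D}}\setminus C$. If $\bar{x}\in\partial\mathbb{D}$, then solving a Dirichlet problem with continuous boundary data having a strict maximum at $\bar{x}$ produces $\phi\in\Phi$ with $\sup_{x\in C}\phi(x)<\phi(\bar{x})$, exactly as in Case~1 of Lemma~\ref{lem-Ubar}. If $\bar{x}\in\mathbb{D}\setminus C$, choose $r>0$ with $\overline{B}(\bar{x},r)\subset\mathbb{D}$ and $\overline{B}(\bar{x},r)\cap C=\emptyset$; then $E:=\overline{B}(\bar{x},r)\cup C$ is compact, and since $C$ is simply connected (so $\mathbb{C}\setminus C$ connected) and the ball is simply connected and disjoint from $C$, the set $\mathbb{C}\setminus E$ is connected, i.e.\ $E$ has connected complement. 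This is precisely the topological hypothesis needed to invoke Runge's theorem to approximate, uniformly on $E$, the function $\varphi$ equal to $0$ near the ball and $1$ near $C$ (which is holomorphic on a neighborhood of $E$ consisting of two disjoint simply connected open sets) by a polynomial $f$ with $\|f-\varphi\|_{\infty,E}<1/3$. Setting $u=\mathrm{Re}(f)\in\Phi$ yields $\sup_{x\in C}u(x)<1/3<2/3<u(\bar{x})$, so~\eqref{eq:sep} holds and $C\in\mathcal{P}_{TC}(\mathbb{D})$ by Corollary~\ref{Cor_separation}.

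The main obstacle, and the place requiring the most care, is the forward direction: one must correctly identify that the failure of simple connectivity yields a ``hole'' $W$ with $\overline{W}\subset\mathbb{D}$ and $\partial W\subset C$, and then apply the harmonic maximum principle on $W$ to \emph{every} $\phi\in\Phi$ simultaneously, concluding that $\bar{x}\in W$ cannot be separated. The finiteness of the number of connected components of $C$ is what makes the complement's component structure tractable (each component of $\mathbb{C}\setminus C$ is open with boundary in $C$), though in fact the argument works for general compact $C$; the finiteness hypothesis is a convenience that also guarantees the holomorphic ``locally constant'' function $\varphi$ in the Runge step is well-defined on a neighborhood with the right topology. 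I would also remark that the equivalence ``$C$ simply connected $\iff\mathbb{C}\setminus C$ connected'' for compact planar $C$ is the standard topological fact underpinning both directions, and I would state it explicitly before beginning.
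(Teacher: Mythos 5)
Your proof is correct and follows essentially the same route as the paper's: necessity of simple connectivity via the harmonic maximum principle applied at a point of a bounded complementary component (whose boundary lies in $C$), and sufficiency via the separation criterion of Corollary~\ref{Cor_separation} together with Runge's theorem, exactly as in Lemma~\ref{lem-Ubar}. The only (harmless) differences are that you spell out the ``surrounded by a hole'' step more carefully than the paper does, and you approximate a two-valued locally constant function ($0$ near the ball, $1$ near $C$) where the paper assigns a distinct constant to each connected component of the enlarged neighborhood of $C$ --- both choices serve the same purpose.
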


\begin{proof} Let $C\neq \emptyset $ be compact in $\mathbb{D}$. We shall
first show that ``being simply connected" is a necessary condition for
trace-convexity. Indeed, assume towards a contradiction that $\mathbb{D}\diagdown C$ is not path connected and let $\bar{x}\in \mathbb{D}\diagdown C$
be a point that cannot be joined to the boundary $\partial \mathbb{D}$ via a
continuous path lying in $\mathbb{D}\diagdown C$. Then $\bar{x}$ is
surrounded by a curve lying in $C$ and (\ref{eq:sep}) fails for all $\phi
\in \Phi $ in view of the maximum principle for harmonic
functions.\smallskip 

Let us now assume that $C$ is simply connected and let 
$\bar{x}\in \mathbb{\bar{D}}\diagdown C$. Similarly to the proof of Lemma~\ref{lem-Ubar} (Case~1) we may assume that $\bar{x}\in \mathbb{D}$. Since $C$ is compact and is
contained in $\mathbb{D}$, there exists $r>0$ sufficiently small, such that
the $r$-enlargement $C_{r}:=C+B(0,r)$ is open, simply connected and its
closure $\bar{C}_{r}:=C+\bar{B}(0,r)$ remains in $\mathbb{D}$. Shrinking
further $r$ if necessary, we may assume that $B(\bar{x},r)\subset \mathbb{D}$
and $B(\bar{x},r)\cap C_{r}=\emptyset .$ Notice that (the open set) $C_{r}$
has finitely many components and that each connected component is open and
simply connected. We denote by $\{\mathcal{V}_{i}\}_{i=1}^{k}$ the connected
components of $C_{r}\ $and we define $\varphi :C_{r}\cup B(\bar{x},r)\rightarrow \mathbb{C}$ by $\varphi |_{\mathcal{V}_{i}}\equiv i$ for $i\in \{1,\ldots ,k\}$ and $\varphi |_{B(\bar{x},r)}\equiv k+1$. Then 
$\varphi $ is trivially a holomorphic functions on $C_{\varepsilon }$ for any 
$\varepsilon \in (0,r)$. Then by Runge's approximation theorem we deduce the
existence of a complex polynomial $f\in H(\mathbb{D)}$ with 
\begin{equation*}
||f-\varphi ||_{\infty }=\sup_{z\in \bar{B}(\bar{x},\varepsilon )\cup
	C_{\varepsilon }}|f(z)-\varphi (z)|<\frac{1}{3}.
\end{equation*}
Taking $u=\mathrm{Re}(f)\in \Phi $ we conclude that 
\begin{equation*}
\sup_{x\in C}\,u(x)<k+\frac{1}{3}\text{\quad and}\quad u(\bar{x})>k+\frac{2}{3}
\end{equation*}
which yields the result. \end{proof} 


\subsection{Trace-convexity for subsets of $\mathbb{N}$.}
\label{ssec:5.3}

Let $X=\mathbb{N}$ be the set of natural numbers, viewed as a completely
regular topological space with its discrete topology. Let further $K=\beta 
\mathbb{N}$. Then 
\begin{equation*}
C_{b}(\mathbb{N})=C(\beta \mathbb{N})=\ell ^{\infty }(\mathbb{N})=\left\{
y=\{y_{n}\}_{n}:\,\,\sup_{n\geq 1}\,|y_{n}|\,:=||y||_{\infty }<+\infty
\right\} .
\end{equation*}
Let $\boldsymbol{1}$ denote the constant sequence with all coordinates equal
to $1$, and $\boldsymbol{b}=\{b_{n}\}_{n}$ with $b_{n}=1/n$ for all $n\in 
\mathbb{N}$. We take 
\begin{equation*}
\Phi =\mathrm{span}\{\boldsymbol{1},\boldsymbol{b}\}\quad \text{(}2\text{-dimensional subset of }\ell ^{\infty }(\mathbb{N})\text{).}
\end{equation*}
Then $\Phi $ obviously contains the constant functions (constant sequences).
It also separates points in $\mathbb{N}$ since the sequence $\boldsymbol{b}$
is injective. Notice that $\Phi $ can be isometrically identified to $\mathbb{R}^{2}$ by identifying $c=(c_{1},c_{2})$ to the (bounded) sequence $\widehat{c}=\left( c_{1}+\frac{c_{2}}{n}\right)_{n\geq 1}$ and by equipping $\mathbb{R}^{2}$ with the following norm:
\begin{equation*}
||c||_{\Phi }:=||\widehat{c}||_{\infty }=\max \{|c_{1}|,|c_{1}+c_{2}|\}.
\end{equation*}
The positive cone of $\Phi $ (cone of positive sequences of $\Phi $)
corresponds to the cone
\begin{equation*}
\Phi _{+}=\left\{ c=(c_{1},c_{2})\in \mathbb{R}^{2}:\,c_{1}\geq 0,
c_{1}+c_{2}\geq 0\right\} \subset (\mathbb{R}^{2},||.||_{\Phi }),
\end{equation*}
with polar cone
\begin{equation*}
\Phi _{+}^{\ast }:=\left\{ Q=(Q_{1},Q_{2})\in \mathbb{R}^{2}:\,Q_{1}\geq
Q_{2}\geq 0\right\} \subset (\mathbb{R}^{2},||.||_{\Phi ^{\ast }})
\end{equation*}
where 
\begin{equation*}
||Q||_{\Phi ^{\ast }}=\left\{ 
\begin{array}{cc}
\max \{|Q_{1}|,|Q_{2}|\}, & \text{if }Q_{1}Q_{2}>0 \medskip 
\\ 
|Q_{1}|+|Q_{2}|, & \phantom{i}\text{if } Q_{1}Q_{2}<0.
\end{array}
\right. 
\end{equation*}
Then~\eqref{eq: K(Phi)} yields:
\begin{equation*}
K(\Phi )=\Phi _{+}^{\ast }\cap \{Q\in \Phi ^{\ast }:||Q||_{\Phi ^{\ast
}}=1\}=\{Q\in \Phi _{+}^{\ast }:\langle Q,\boldsymbol{1}\rangle =1\}=\left\{
\,(1,1+t):\,t\in \lbrack 0,1]\right\} .
\end{equation*}
Thus, $\mathrm{Ext\,}K(\Phi )=\{(1,0),(1,1)\}$. According to~\eqref{eq: detla_Phi}, the canonical injection $\delta ^{\Phi }:\mathbb{N}
\rightarrow K(\Phi )$ gives for $k\in \mathbb{N}:$  
\begin{equation*}
\delta ^{\Phi }(k)=\widehat{k}\quad \text{with\quad }\widehat{k}(\widehat{c}
)=\widehat{c}(k)=c_{1}+\frac{c_{2}}{k}=\left\langle (1,\frac{1}{k}
),(c_{1},c_{2})\right\rangle.
\end{equation*}
Therefore $\delta ^{\Phi }(\mathbb{N})=\{(1,1/k):k\geq 1\}\subset K(\Phi )$
and the $\Phi $-Choquet boundary of $\mathbb{N}$ is $\partial _{\Phi }
\mathbb{N}=\{1,\infty \}$ where $\infty \in \beta X$ is the ultrafilter
generated by the sets $A_{n}=\{k:k\geq n\}.$ It follows directly from
Definition \ref{def_TC-set} that a subset $A\subset \mathbb{N}$ is
trace-convex if and only if it is an \textit{interval }with respect to the
order of $\mathbb{N}$, that is, if it is of the form 
\begin{equation*}
\{k:n_{1}\leq k\leq n_{2}\},\text{ }\{k:k\leq n_{2}\}\quad \text{or }%
\{k:k\geq n_{1}\}
\end{equation*}
for some $n_{1},n_{2}\in \mathbb{N}$. Finally, a function (sequence) 
$y=\{y_{n}\}_{n\geq 1}\in \ell ^{\infty }$ is Choquet convex (according to
Corollary \ref{equiv} and Definition~\ref{def_convex-trace}) if and only if 
$y_{n}=f(1/n)$ for some convex function $t\mapsto f(t)$ defined on 
$[0,1]\approx K(\Phi )$.

\medskip 

Let us finally mention that different interesting notions of trace-convexity
can be obtained by taking $\boldsymbol{b}=(b_{n})_{n}$ to be (injective and)
non-monotone (for instance, $b_{n}=1+(-1)^{n}/n,$ for all $n\geq 1$) or by
completely different choices of $\Phi $ (of dimension either~$2$ or more).
The interesting reader might see how these choices modify the Choquet
boundary of $\mathbb{N}$ in relation with the results of Section~\ref{sec:4}.

\bigskip 

\hrule

\bigskip

\noindent\textbf{Acknowledgement.}\, Part of this work has been conducted within the FP2M federation (CNRS FR 2036) and during a research visit of the second author to the SAMM Laboratory of the University Paris Sorbonne-Panthéon (February 2020). This author thanks his hosts for hospitality.

\bigskip

\hrule
\vspace{0.6cm}

\noindent Mohammed BACHIR\medskip

\noindent Laboratoire SAMM 4543, Universit\'{e} Paris 1
Panth\'{e}on-Sorbonne\newline Centre PMF, 90 rue Tolbiac, F-75634 Paris cedex
13, France.\smallskip

\noindent E-mail: \texttt{Mohammed.Bachir@univ-paris1.fr}

\noindent \texttt{http://samm.univ-paris1.fr/Mohammed-Bachir}

\vspace{0.7cm}

\noindent Aris DANIILIDIS\medskip

\noindent DIM--CMM, UMI CNRS 2807\newline Beauchef 851, FCFM, Universidad de
Chile \\
CP 8370459, Santiago, Chile\smallskip

\noindent E-mail: \texttt{arisd@dim.uchile.cl}

\noindent \texttt{http://www.dim.uchile.cl/\symbol{126}arisd/}

\medskip

\noindent Research supported by the grants:\smallskip \\ 
CMM-CONICYT AFB170001, ECOS-CONICYT C18E04, FONDECYT
1171854 (Chile) \smallskip \\ and PGC2018-097960-B-C22 (MICINN, Spain and ERDF, EU).

\end{document}